\numberwithin{equation}{section}
\theoremstyle{plain}
\newtheorem{theorem}{Theorem}[section]
\newtheorem{lemma}[theorem]{Lemma}
\newtheorem{proposition}[theorem]{Proposition}
\theoremstyle{definition}
\newtheorem{definition}[theorem]{Definition}
\newtheorem{assumption}[theorem]{Assumption}
\theoremstyle{remark}
\newtheorem{remark}[theorem]{Remark}
\newtheorem{notation}[theorem]{Notation}
\newcommand{\bH}{\mathbb{H}}
\newcommand{\bI}{\mathbb{I}}
\newcommand{\bQ}{\mathbb{Q}}
\newcommand{\bR}{\mathbb{R}}
\newcommand{\bZ}{\mathbb{Z}}
\newcommand\cA{\mathcal{A}}
\newcommand\cB{\mathcal{B}}
\newcommand\cD{\mathcal{D}}
\newcommand\cF{\mathcal{F}}
\newcommand\cH{\mathcal{H}}
\newcommand\cK{\mathcal{K}}
\newcommand\cN{\mathcal{N}}
\newcommand\cP{\mathcal{P}}
\newcommand\cQ{\mathcal{Q}}
\newcommand\cU{\mathcal{U}}
\newcommand\cM{\mathcal{M}}
\providecommand{\norm}[1]{\lVert#1\rVert}
\def\Xint#1{\mathchoice
	{\XXint\displaystyle\textstyle{#1}}%
	{\XXint\textstyle\scriptstyle{#1}}%
	{\XXint\scriptstyle\scriptscriptstyle{#1}}%
	{\XXint\scriptscriptstyle\scriptscriptstyle{#1}}%
	\!\int}
\def\XXint#1#2#3{{\setbox0=\hbox{$#1{#2#3}{\int}$}
		\vcenter{\hbox{$#2#3$}}\kern-.5\wd0}}
\def\dashint{\Xint-}%For the average integral symbol
\newcommand{\p}{\partial}
\newcommand{\epsi}{\varepsilon}
\begin{document}

\title[Mixed boundary value problem]{Optimal regularity of mixed Dirichlet-conormal  boundary value problems for parabolic operators} %with time-dependent interfacial boundary}

\author[J. Choi]{Jongkeun Choi}
\address[J. Choi]{Department of Mathematics Education, Pusan National University, Busan 46241, Republic of Korea}
	
\email{jongkeun\_choi@pusan.ac.kr}

\thanks{J. Choi was supported by the National Research Foundation of Korea (NRF) under agreement NRF-2019R1F1A1058826}

\author[H. Dong]{Hongjie Dong}	
\address[H. Dong]{Division of Applied Mathematics, Brown University, 182 George Street, Providence, RI 02912, USA}
	\email{hongjie\_dong@brown.edu}
\thanks{H. Dong was partially supported by the Simons Foundation, grant no. 709545, a Simons fellowship, grant no. 007638, and the NSF under agreement DMS-2055244.}

\author[Z. Li]{Zongyuan Li}
\address[Z. Li]{Department of Mathematics, Rutgers University, 110 Frelinghuysen Road, Piscataway, NJ 08854-8019, USA}
\email{zongyuan.li@rutgers.edu}
\thanks{Z. Li was partially supported by an AMS-Simons travel grant.}

\subjclass[2010]{35K20, 35B65, 35R05}
\keywords{Mixed boundary value problem, Parabolic equation, Reifenberg flat domains}

\begin{abstract}
We obtain the regularity of solutions in Sobolev spaces for the mixed Dirichlet-conormal problem for parabolic operators in cylindrical domains with time-dependent separations, which is the first of its kind. Assuming the boundary of the domain to be Reifenberg-flat and the separation to be locally sufficiently close to a Lipschitz function of $m$ variables, where $m=0,\ldots,d-2$, with respect to the Hausdorff distance, we prove the unique solvability for $p\in (2(m+2/(m+3),2(m+2)/(m+1)))$. In the case when $m=0$, the range $p\in(4/3,4)$ is optimal in view of the known results for Laplace equations.
\end{abstract}

\maketitle

\tableofcontents
%========================================
\section{Introduction}
%========================================
In this paper, we obtain the maximal regularity for divergence form parabolic equations with mixed boundary conditions:
	\begin{equation}		\label{eqn-11291710}
		\begin{cases}
			\cP u-\lambda u=  D_ig_i+f  & \text{in }\, \cQ^T,\\
			\cB u = g_in_i& \text{on }\, \cN^T,\\
			u = 0 & \text{on }\, \cD^T,
		\end{cases}
	\end{equation}
where for some $T\in (-\infty, \infty]$, $\cQ^T=(-\infty,T)\times\Omega$ is a cylinder with the base $\Omega\subset\bR^d$ being either bounded or unbounded.
The lateral boundary of $\cQ^T$ is decomposed into two non-intersecting components $\cD^T$ and $\cN^T$ on which we impose two different types of boundary conditions.
We consider both cases when $\cD^T,\cN^T$, and their interfacial boundary (separation) $\Gamma^T$ are cylindrical and non-cylindrical.

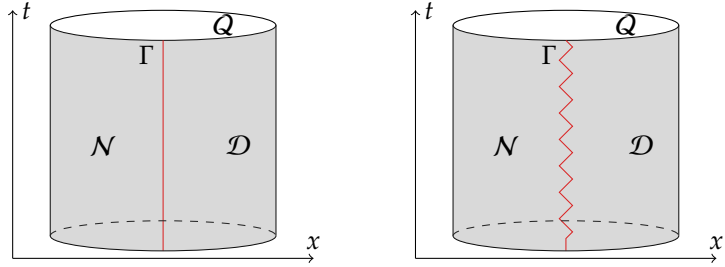
\begin{figure}[hb!]
		\begin{tikzpicture}
		\draw [->] (-2,-3.3) -- (-2, 0) node (taxis) [right] {$t$};
		\draw [->] (-2,-3.3) -- (2,-3.3) node (xaxis) [above] {$x$};
		\draw (0,-0.2) ellipse (1.5 and 0.2);%topcircle
		\draw (-1.5,-0.2) -- (-1.5,-3);%leftend
		\draw[red] (0,-0.4) -- (0,-3.2);%Gamma
		\draw (-1.5,-3) arc (180:360:1.5 and 0.2);%bottom lower half circle
		\draw [dashed] (-1.5,-3) arc (180:360:1.5 and -0.2);%bottom upper half circle
		\draw (1.5,-0.2) -- (1.5,-3);  %rightend
		\fill [gray,opacity=0.3] (-1.5,-0.2) -- (-1.5,-3) arc (180:360:1.5 and 0.2) -- (1.5,-0.2) arc (0:180:1.5 and -0.2);%fill front surface
		\node[left] at (0,-0.6) {$\Gamma$};
		\node at (1,-1.8) {$\cD$};
		\node at (-0.8,-1.8) {$\cN$};
		\node at (0.8,-0.2) {$\cQ$};
%		\node at (1.2,-0.6) {$\p_l\cQ$};
	\end{tikzpicture}		
%\hfill
\quad \quad \quad
\begin{tikzpicture}
	\draw [->] (-2,-3.3) -- (-2, 0) node (taxis) [right] {$t$};
	\draw [->] (-2,-3.3) -- (2,-3.3) node (xaxis) [above] {$x$};
	\draw (0,-0.2) ellipse (1.5 and 0.2);%topcircle
	\draw (-1.5,-0.2) -- (-1.5,-3);%leftend
	\draw[snake=zigzag, red] (0,-0.4) -- (0,-3.2);%Gamma
	\draw (-1.5,-3) arc (180:360:1.5 and 0.2);%bottom lower half circle
	\draw [dashed] (-1.5,-3) arc (180:360:1.5 and -0.2);%bottom upper half circle
	\draw (1.5,-0.2) -- (1.5,-3);  %rightend
	\fill [gray,opacity=0.3] (-1.5,-0.2) -- (-1.5,-3) arc (180:360:1.5 and 0.2) -- (1.5,-0.2) arc (0:180:1.5 and -0.2);%fill front surface
	\node[left] at (0,-0.6) {$\Gamma$};
	\node at (1,-1.8) {$\cD$};
	\node at (-0.8,-1.8) {$\cN$};
	\node at (0.8,-0.2) {$\cQ$};
%	\node at (1.2,-0.6) {$\p_l\cQ$};
\end{tikzpicture}
\caption{Domains with cylindrical and non-cylindrical $\Gamma$}
\end{figure}
We also consider the equations in a finite height cylinder, in which case, we impose the zero initial condition at $t=S$ for some finite $S<T$.
For the parabolic operator $\cP$ and the conormal derivative operator $\cB$
$$
\begin{aligned}
\cP u&=-u_t+D_i(a^{ij}D_j u+a^i u)+b^i D_i u+cu,\\
\cB u&=(a^{ij}D_j u+a^i u)n_i,
\end{aligned}
$$
where $n=(n_1,\ldots, n_d)$ is the outward unit normal to $\partial \Omega$, we always assume that the leading coefficients $(a^{ij})$ are symmetric, and that  there exists $\Lambda\in (0,1]$ satisfying
$$
a^{ij}(t,x)\xi_j\xi_i\ge \Lambda |\xi|^2, \quad |a^{ij}(t,x)|\le \Lambda^{-1}
$$
for any $\xi\in \bR^d$ and $(t,x)\in \bR\times \bR^d$.
We also always assume
%{\color{red}that  the lower-order coefficients $a^i$, $b^i$, and $c$ satisfy}
\begin{equation*}
|a^i|+|b^i|+|c|\le K
\end{equation*}
for some $K>0$.

Elliptic and parabolic equations with mixed boundary conditions arise naturally in physics and material science.
For example, when a block of floating ice is melting into liquid water, the ice-water interface maintains zero temperature (Dirichlet) while the ice-air interface is insulated (Neumann).
Such problem also has applications in the combustion theory. See, for example, \cite{MR1804512,MR1799414}.
We also refer to \cite{MR3190265} for an application in modelling exocytosis, which is a form of active transport mechanism. It is worth mentioning that the mixed problems for the heat or Laplace equations are commonly called Zaremba problem in the literature.

In contrast to the purely Dirichlet or conormal boundary value problem, solutions to mixed boundary value problems can be non-smooth near the separation $\Gamma$ even if the domain, coefficients, and boundary data are all smooth. In the literature, elliptic equations with mixed boundary conditions have been studied quite extensively both from the PDE perspective and harmonic analysis point of view. We refer the reader to \cite{MR0239272, MR0826642, MR1284808, MR1452171, MR2810833, MR3573649} and \cite{MR3034453, MR3170211, BC20,arXiv:1904.00545, arXiv:2003.10980} and the references therein. In these papers, regularity of solutions in H\"older, Sobolev, and Besov spaces as well as the non-tangential maximal function estimates were obtained.
We also refer the reader to \cite{MR3315670, MR3797618} for results about the Kato square root problem for elliptic equations and systems with mixed boundary conditions and their applications.

However, there are relatively few papers dealing with parabolic equations with mixed boundary conditions.
In \cite{MR1950984}, Skubachevskii and Shamin established the existence, uniqueness, and stability of solutions to parabolic equations in a cylindrical domain with time-independent separation $\Gamma$ under minimal regularity assumptions on initial data, by using a semigroup approach and properties of difference operators in $L_2$-based Sobolev spaces. Hieber and Rehberg \cite{MR2403322} studied systems of reaction-diffusion equations with mixed Dirichlet-Neumann boundary conditions in 2D and 3D cylindrical Lipschitz domains satisfying Gr\"oger's regular condition. Assuming an implicit topological isomorphism condition on the second-order operator, they proved the maximal $L_p$ estimates by using harmonic analysis and a heat-kernel method. See also \cite{MR2541414} for a further result about equations with nonhomogeneous boundary data as well as an earlier result in \cite{MR0990595} about the elliptic mixed boundary value problem.
We also mention the work \cite{MR1444765,MR3804727} for parabolic equations in non-cylindrical domains. In \cite{MR1444765} Savar\'{e} considered parabolic equations in a non-cylindrical domain with $C^{1,1}$ boundary and separation $\Gamma$. Under certain condition on the excess of $\Gamma$ with respect to $t$, he introduced an approximation approach to general abstract evolution equations in $L_2$-based Sobolev spaces and  obtained optimal regularity under quite weak assumptions on the data. %, u_t,Au\in L_2, $Du\in L_\infty(L_2)\cap B_{2,\infty}^{1/2}(L_2)
Recently in \cite{MR3804727}, Kim and Cao studied linear and semilinear parabolic equations in non-cylindrical domains with the mixed Dirichlet, Neumann, and Robin conditions and Lipschitz leading coefficients. They considered smooth domains which are $C^1$ in $t$ and $C^2$ in $x$ and general $\cD^T$ and $\cN^T$, and proved the solvability in $L_2$-based Sobolev spaces.
In \cite{MR3582221}, a maximal regularity result was established for parabolic equations with time irregular coefficients and mixed boundary conditions when the exponents are in a neighborhood of $2$. See also \cite{MR3659370} for an optimal $L^p$ maximal regularity result when $p\le 2$ and the coefficients are BV in the $x$ variable. We remark that in all these work, either $p$ is assumed to be $2$ or an implicit condition is imposed on the operator, so that $p$ needs to be sufficiently close to $2$.
%For other previous results about mixed boundary value problem, we refer the reader to \cite{MR3126756, MR3277902}  for semilinear parabolic equations with {\color{blue}a} memory term and \cite{MR3348258} for fractional parabolic equations.

In this paper, we consider parabolic equations in a cylindrical domain with rough boundary and separation and with vanishing mean oscillation (VMO) coefficients. Let $m=0,\ldots,d-2$ be an integer. Our main result, Theorem \ref{MT1}, reads that if $\partial\Omega$ is Reifenberg-flat and the separation $\Gamma$ is locally sufficiently close to a Lipschitz function of $m$ variables with respect to the Hausdorff distance, then for any $p\in \big(\frac{2(m+2)}{m+3}, \frac{2(m+2)}{m+1}\big)$, $f,g_i\in L_p(\cQ^T)$, and sufficiently large $\lambda$, there is a unique solution $u\in \cH^1_p(\cQ^T)$ to \eqref{eqn-11291710}. See Section \ref{sec2.1} for the definitions of function spaces. In the special case when $\Gamma$ is Reifenberg-flat of co-dimension $2$, i.e., $m=0$, we get the solvability when $p\in (4/3,4)$, which is an optimal range in view of the known results for the Laplace equation in \cite{MR0239272} and for elliptic equations in \cite{arXiv:1904.00545}.
%Furthermore, under the same conditions, in Theorem \ref{thm-201022-0843} we also obtain the solvability in mixed-norm space $L_q^t(L_p^x)$ for any $q\in [p,\infty)$ when $p\in \big(\frac{2(m+2)}{m+3}, 2\big]$. This shows that in the case of mixed boundary conditions,
%the higher-in-time integrability of right-hand side still implies the same higher integrability of the solution. To the best of our knowledge, these are the first results about the maximal regularity for parabolic equations with mixed boundary conditions even in the case when $\partial\Omega$ and $\Gamma$ are smooth and $\Gamma$ is time-independent.

Let us give a brief description of the proof. In the first step, we show the $L_2$ solvability by approximating $\Gamma$ by piecewise time-independent separations and $\Omega$ by extension domains, and solve the approximating equations using the Galerkin method. Note that one cannot directly test the equation with $u$ because the usual Steklov average argument is not applicable here.
The second step is to derive an estimate of the form
$$
\|Du\|_{L^{x'}_pL_\infty^{t,x''}(Q_{1/2}^+)}\le  C\|Du\|_{L_1(Q_{1}^+)},\quad
\forall p\in \bigg[2,\frac{2(m+2)}{m+1}\bigg),
$$
where $u$ is a weak solution to a homogeneous equation with constant coefficients in the half cylinder $Q_1^+=\{(t,x):t\in (-1,0),|x|<1,x^1>0\}$. It satisfies the mixed homogeneous boundary conditions on the flat boundary $\{x^1=0\}$ with a time-independent separation $\Gamma$ defined by a $\{x^2=\phi(x^3,\ldots,x^{m+2})\}$, where $\phi$ is a Lipschitz function, $x'=(x^1,\ldots,x^{m+2})$ and $x''=(x^{m+3},\ldots,x^d)$. This will be achieved by a boundary Caccioppoli type inequality and the corresponding elliptic estimate obtained in \cite{arXiv:2003.10980}, which in turn is a consequence of the Besov type estimate established in \cite{MR1452171,BC20}. Since $\partial\Omega$ and $\Gamma$ are not smooth, the usual flattening boundary argument does not work in our case. To approximate the domain by domains with flat boundaries and separations, the third step in the proof is to apply the cutoff and reflection argument first used in \cite{MR2835999,MR3013054} for equations with the pure Dirichlet or conormal boundary condition. Such argument was also used recently in \cite{MR3809039,arXiv:2003.10980} for elliptic equations with mixed boundary conditions. Here an additional difficulty is the extra $u_t$ term which appears on the right-hand side after the reflection. For this, we use a delicate multi-step decomposition procedure. Finally, we complete the proof of Theorem \ref{MT1} by utilizing a level set argument introduced by Caffarelli and Peral \cite{MR1486629} and the ``crawling of ink spots" lemma due to Krylov and Safonov \cite{MR0579490,MR0563790}. %To prove the mixed-norm estimate in Theorem \ref{thm-201022-0843}, we first localize the estimate in Theorem \ref{MT1} to obtain a reverse H\"older's inequality. We then borrow an idea of Krylov \cite{MR2352490} to apply the level set argument in the $t$-variable only.

The rest of the paper is organized as follows. In Section \ref{sec2}, we introduce the notation and function spaces, and then give our main result, Theorem \ref{MT1} for the solvability of \eqref{eqn-11291710} in Sobolev spaces. %and Theorem \ref{thm-201022-0843} for the solvability in mixed-norm Sobolev spaces.
In Section \ref{sec3}, we prove several auxiliary results including the $L_2$ solvability, Poincar\'e and embedding inequalities suitable to our problem, and a reverse H\"older's inequality for weak solutions to the mixed boundary value problem. Section \ref{sec4} is devoted to the boundary estimates for equations with constant coefficients near a curved boundary or a flat boundary. In Section \ref{S5}, we give the proof of Theorem \ref{MT1}. %and \ref{thm-201022-0843}, respectively.

%========================================
\section{Notation and main results}     \label{sec2}
%========================================

%========================================
\subsection{Notation}       \label{sec2.1}
%========================================

Throughout the paper, we always assume that $\Omega$ is a domain (open and connected, but not necessarily bounded) in $\bR^d$ and $\cQ=(-\infty,\infty)\times \Omega$ is an infinitely long cylinder, which is a subset of
$$
\bR^{d+1}=\{X=(t,x): t\in \bR, \, x=(x^1,\ldots, x^d)\in \bR^d\}.
$$
We also assume that the  boundary of $\cQ$, denoted by  $\partial \cQ=(-\infty, \infty)\times \partial \Omega$, is divided into two disjoint portions $\cD$ and $\cN$, separated by $\Gamma$. More precisely, let $\cD\subset \partial\cQ$ be an open set (relative to $\partial\cQ$) and
$$
\cN = \partial\cQ\setminus\cD,\quad \Gamma=\overline{\cD}\cap\overline{\cN}.
$$
Note that the separation $\Gamma$ between $\cD$ and $\cN$ is time-dependent unless explicitly specified otherwise.
In Section \ref{S_2_2}, we will impose certain regularity assumptions on $\partial \Omega$ and $\Gamma$.

For $T\in (-\infty, \infty]$, we define
$$
\cQ^T=\{X\in \cQ: t< T\}
$$
and similarly define $\cD^T$, $\cN^T$, and $\Gamma^T$.
For $R>0$, we set
$$
B_R(x)=\{y\in \bR^d:|x-y|<R\}, \quad \Omega_R(x)=\Omega\cap B_R(x),
$$
$$
Q_R(X)=(t-R^2,t)\times B_R(x), \quad \cQ_R(X)=\cQ\cap Q_R(X),
$$
$$
\bQ_R(X)=(t-R^2,t+R^2)\times B_R(x).
$$
We use the abbreviations $B_R=B_R(0)$ and $\Omega_R=\Omega_R(0)$, etc.
For a function $f$ on $Q$, we denote
$$
(f)_Q=\frac{1}{|Q|}\int_{Q} f \,dX=\dashint_{Q} f\,dX.
$$

Now we introduce function spaces and the notion of weak solutions to mixed boundary value problems. To start with, we use the unified notation for cylinders with finite or infinite height: for given $S,T$ with $-\infty\le S<T\le \infty$, we set
\begin{equation}		\label{200508@eq4}
\begin{aligned}
\tilde{\cQ}&=\{X\in \cQ: S<t<T\}, \\
\tilde{\cD}&=\{X\in \cD: S<t<T\}, \\
\tilde{\cN}&=\{X\in \cN: S<t<T\}.
\end{aligned}
\end{equation}
We write $X=(t,x)=(t,x',x'')\in \bR^{d+1}$, where $x'\in\bR^{d_1}$ and $x''\in \bR^{d_2}$, $d_1+d_2=d$.
For $p,q\in [1, \infty)$, we define $L_{q}^{(t,x'')}L_p^{x'}(\tilde{\cQ})$ to be the set of all functions $u$ such that
$$
\|u\|_{L_q^{(t,x'')}L_p^{x'}(\tilde{\cQ})}:=\Bigg(\int_{\bR^{d_2+1}}\Bigg(\int_{\bR^{d_1}} |u|^p \bI_{\tilde{\cQ}}\,dx'\Bigg)^{q/p}dx''\, dt\Bigg)^{1/q}<\infty,
$$
where $\bI_{\tilde{\cQ}}$ is the usual characteristic function.
Similarly, we define $L_q^{(t,x'')}L_p^{x'}(\tilde{\cQ})$ with $p=\infty$ or $q=\infty$, and
$L_p^{x'}L_q^{(t,x'')}(\tilde{\cQ})$.
We abbreviate
$$
L^t_qL^x_p(\tilde{\cQ})=L_{q,p}(\tilde{\cQ}), \quad L_{p,p}(\tilde{\cQ})=L_p(\tilde{\cQ}).
$$
Let $C^\infty_{\tilde{\cD}}(\tilde{\cQ})$ be the set of all infinitely differentiable functions on $\bR^{d+1}$ having a compact support in $[S,T]\times \overline{\Omega}$ and vanishing in a neighborhood of $\tilde{\cD}$.
We denote by $W^{0,1}_{q,p, \tilde{\cD}}(\tilde{\cQ})$ and $W^{1,1}_{q,p, \tilde{\cD}}(\tilde{\cQ})$ the closures of $C^\infty_{\tilde{\cD}}(\tilde{\cQ})$ in $W^{0,1}_{q,p}(\tilde{\cQ})$ and $W^{1,1}_{q,p}(\tilde{\cQ})$, respectively, where
$$
W^{0,1}_{q,p}(\tilde{\cQ})=\{u: u,  Du\in L_{q,p}(\tilde{\cQ})\}, \quad W^{1,1}_{q,p}(\tilde{\cQ})=\{u: u,  Du, u_t\in L_{q,p}(\tilde{\cQ})\}.
$$
We write $W^{0,1}_{p,p}(\tilde{\cQ})=W^{0,1}_p(\tilde{\cQ})$, etc.

By $u\in \bH^{-1}_{p, \tilde{\cD}}(\tilde{\cQ})$ we mean that there exist $g=(g_1,\ldots, g_d)\in L_{p}(\tilde{\cQ})^d$ and $f\in L_{p}(\tilde{\cQ})$ such that
$$
u=D_i g_i+f \quad \text{in }\, \tilde{\cQ}, \quad g_in_i=0 \quad \text{on }\, \tilde{\cN},
$$
where $n=(n_1,\ldots,n_d)$ is the outward unit normal to $\partial \Omega$, in the distribution sense
and the norm
$$
\|u\|_{\bH^{-1}_{p, \tilde{\cD}}(\tilde{\cQ})}=\inf\big\{ \|g\|_{L_{p}(\tilde{\cQ})}+\|f\|_{L_{p}(\tilde{\cQ})} : u=D_ig_i+f\, \text{ in }\, \tilde{\cQ}, \, g_in_i=0 \, \text{ on }\, \tilde{\cN}\big\}
$$
is finite.
We set
$$
\cH_{p, \tilde{\cD}}^1(\tilde{\cQ})=\big\{u: u\in W^{0,1}_{p, \tilde{\cD}}(\tilde{\cQ}), \, u_t\in \bH^{-1}_{p, \tilde{\cD}}(\tilde{\cQ})\big\}
$$
equipped with a norm
$$
\|u\|_{\cH^1_{p, \tilde{\cD}}(\tilde{\cQ})}
=\|u\|_{W^{0,1}_{p}(\tilde{\cQ})}+\|u_t\|_{\bH^{-1}_{p, \tilde{\cD}}(\tilde{\cQ})}.
$$

When $\tilde{\cQ}$ has a finite height (i.e., $-\infty<S<T<\infty$), we define $V_{2}(\tilde{\cQ})$ as the set of all functions in $W^{0,1}_{2}(\tilde{\cQ})$ having the following finite norm
\begin{equation*}
\norm{u}_{V_2(\tilde{\cQ})}=\operatorname*{ess\,sup}_{S< t < T}\,\norm{u(t,\cdot)}_{L_2(\Omega)}+\norm{Du}_{L_2(\tilde{\cQ})}.
\end{equation*}
When $\tilde{\cQ}$ has an infinite height, we define $V_{2}(\tilde{\cQ})$  as the set of $u\in V_2(\tilde{\cQ}\cap \{(t,x):|t|<T_0\})$  for all $T_0>0$ having the finite norm $\|u\|_{V_2(\tilde{\cQ})}$.
We then denote by $V_{2, \tilde{\cD}}(\tilde{\cQ})$ the closure of $W^{0,1}_{2,\tilde{\cD}}(\tilde{\cQ})$ in $V_2(\tilde{\cQ})$.

We write $u\in L_{q, p, \rm{loc}}(\tilde{\cQ})$ if $\eta u\in L_{q,p}(\tilde{\cQ})$ for any infinitely differentiable function $\eta$ on $\bR^{d+1}$ having a compact support, and
similarly define $\cH^1_{p, \tilde{\cD},{\rm{loc}}}(\tilde{\cQ})$, etc.
\begin{definition}
We say that $u\in \cH_{p,\tilde{\cD},{\rm{loc}}}^1(\tilde{\cQ})$ satisfies the mixed boundary value problem
\begin{equation}		\label{200508@eq2}
\begin{cases}
\cP u = D_ig_i + f & \text{in }\, \tilde{\cQ},\\
\cB u = g_in_i& \text{on }\, \tilde{\cN},\\
u = 0 & \text{on }\, \tilde{\cD},\\
\end{cases}
\end{equation}
where $g=(g_1,\ldots, g_d)\in L_{p,{\rm{loc}}}(\tilde{\cQ})^d$ and $f\in L_{p, {\rm{loc}}}(\tilde{\cQ})$ if
$$
\int_{\tilde{\cQ}} u\varphi_t\,dX+\int_{\tilde{\cQ}} (-a^{ij}D_j uD_i \varphi-a^i uD_i \varphi+b^iD_i u\varphi + c u\varphi)\,dX = \int_{\tilde{\cQ}} (-g_i D_i \varphi+f\varphi)\,dX
$$
for all $\varphi\in C^\infty_{\tilde{\cD}}(\tilde{\cQ})$ that vanishes for $t=S$ and $T$.
For $S>-\infty$, we also say that $u\in \cH^1_{p,\tilde{\cD}, {\rm{loc}}}(\tilde{\cQ})$ satisfies \eqref{200508@eq2} with the initial condition $u(S,\cdot)=\psi$ on $\Omega$,
where $\psi\in L_{p, {\rm{loc}}}(\Omega)$ if
\begin{equation}		\label{200508@eq3}
\begin{gathered}
\int_{\tilde{\cQ}} u\varphi_t\,dX+\int_{\tilde{\cQ}} (-a^{ij}D_j uD_i \varphi-a^i uD_i \varphi+b^iD_i u\varphi + c u\varphi)\,dX \\
= -\int_{\tilde{\cQ}} (g_i D_i \varphi-f\varphi)\,dX-\int_\Omega \psi \varphi(S,\cdot)\,dx
\end{gathered}
\end{equation}
for all $\varphi\in C^\infty_{\tilde{\cD}}(\tilde{\cQ})$ that vanishes for $t=T$.
\end{definition}

%========================================
\subsection{Main result}		\label{S_2_2}
%========================================

We impose the following regularity assumptions on the domain and the leading coefficients. The first one is the so-called Reifenberg flat conditions on $\p\Omega$ and $\Gamma$.

\begin{assumption}[$\gamma; m, M$]		\label{A11}
Let $m\in \{0,1,\ldots, d-2\}$ and $M\in (0, \infty)$.
\begin{enumerate}[$(a)$]
\item
For any $x_0\in \partial \Omega$ and $R\in (0, R_0]$, there is a coordinate system depending on $x_0$ and $R$ such that in this coordinate system, we have
\begin{equation}		\label{200429@eq1}
\{y: y^1>x_0^1+\gamma R\}\cap B_R(x_0)\subset \Omega_R(x_0)\subset \{y: y^1>x_0^1-\gamma R\}\cap B_R(x_0).
\end{equation}

\item
For any $X_0=(t_0,x_0)\in \Gamma$ and $R\in (0, R_0]$,   there exist
a spatial coordinate system and a Lipschitz function $\phi$ of $m$ variables with Lipschitz constant $M$,  such that in the new coordinate system (called the coordinate system associated with $(X_0, R)$), we have  \eqref{200429@eq1},
$$
\big(\partial \cQ\cap \bQ_R(X_0) \cap \{(s,y): y^2>\phi(y^3,\ldots,y^{m+2})+\gamma R\} \big)\subset \cD,
$$
$$
\big(\partial \cQ \cap \bQ_R(X_0)\cap  \{(s,y): y^2<\phi(y^3,\ldots, y^{m+2})-\gamma R\}\big)\subset \cN,
$$
and
$$
\phi(x_0^3,\ldots, x_0^{m+2})=x_0^2.
$$
Here, if $m=0$, then the function $\phi$ is  understood as the constant function $\phi\equiv x_0^2$.
\end{enumerate}
\end{assumption}

The second is the small BMO condition on the leading coefficients.

\begin{assumption}[$\theta$]\label{A2}
For any $X_0\in \overline{\cQ}$ and $R\in (0, R_0]$, we have
$$
\dashint_{\cQ_R(X_0)}|a^{ij}(X)-(a^{ij})_{\cQ_R(X_0)}|\,dX\le \theta.
$$
\end{assumption}

Our theorems deal with cylinders with infinite or finite height. We rewrite \eqref{eqn-11291710} in the unified form

\begin{equation}		\label{200812@eq3}
	\begin{cases}
		\cP u-\lambda u=  D_ig_i+f  & \text{in }\, \tilde{\cQ},\\
		\cB u = g_in_i& \text{on }\, \tilde{\cN},\\
		u = 0 & \text{on }\, \tilde{\cD},
	\end{cases}
\end{equation}
where $\tilde{\cQ}, \tilde{\cN}$, and $\tilde{\cD}$ are defined  in \eqref{200508@eq4}, and $-\infty\le S<T\le \infty$.
The main theorem of the paper is the following solvability result for \eqref{200812@eq3} in Sobolev spaces.

\begin{theorem}		\label{MT1}
Let
$R_0\in (0,1]$,  $m\in \{0,1,\ldots,d-2\}$, $M\in (0, \infty)$, and  $p\in \big(\frac{2(m+2)}{m+3}, \frac{2(m+2)}{m+1}\big)$.
There exist constants $\gamma,\theta\in (0,1)$ and $\lambda_0\in (0, \infty)$ with
$$
(\gamma, \theta)=(\gamma, \theta)(d, \Lambda, M, p), \quad \lambda_0=\lambda_0(d,\Lambda, M, p, K, R_0),
$$
such that if Assumptions \ref{A11} $(\gamma;m,M)$ and \ref{A2} $(\theta)$ are satisfied with  these $\gamma$ and $\theta$, then the following assertions hold.
\begin{enumerate}[$(a)$]
\item
When $S=-\infty$, for any $\lambda\ge \lambda_0$, $g=(g_1,\ldots, g_d)\in L_{p}(\tilde{\cQ})^d$, and $f\in L_{p}(\tilde{\cQ})$, there exists a unique solution $u\in \cH^1_{p, \tilde{\cD}}(\tilde{\cQ})$ to \eqref{200812@eq3}, which satisfies
\begin{equation}		\label{200812@eq2}
\|Du\|_{L_p(\tilde{\cQ})}+\lambda^{1/2} \|u\|_{L_p(\tilde{\cQ})}\le C\|g\|_{L_p(\tilde{\cQ})}+C\lambda^{-1/2}\|f\|_{L_p(\tilde{\cQ})},
\end{equation}
where $C=C(d, \Lambda, M, p)$.
\item
For the initial boundary value problem on a cylindrical domain of finite height, we can take $\lambda=0$, i.e., when $S=0$ and $T\in (0, \infty)$, for any $g=(g_1,\ldots, g_d)\in L_{p}(\tilde{\cQ})^d$ and $f\in L_{p}(\tilde{\cQ})$, there exists a unique solution $u\in \cH^1_{p, \tilde{\cD}}(\tilde{\cQ})$ to \eqref{200812@eq3} with $\lambda=0$ and the initial condition $u(0,\cdot)\equiv 0$ on $\Omega$.
Moreover, we have
$$
\|u\|_{\cH^{1}_{p, \tilde{\cD}}(\tilde{\cQ})}\le C\|g\|_{L_p(\tilde{\cQ})}+C\|f\|_{L_p(\tilde{\cQ})},
$$
where $C=C(d,\Lambda, M, p, K, R_0, T)$.
\end{enumerate}
\end{theorem}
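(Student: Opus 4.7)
The plan is to prove Theorem \ref{MT1} by a perturbative scheme: first obtain an $L_2$ base solvability, then establish a sharp mean-oscillation estimate for homogeneous constant-coefficient model problems, and finally combine the two via a level-set and maximal function argument to bootstrap from $L_2$ to $L_p$. For the base case I would work in $\cH^1_{2,\tilde{\cD}}(\tilde{\cQ})$; since $\Gamma$ is time-dependent and rough, the usual Steklov averaging test is unavailable, so I would regularize by approximating $\Gamma$ with piecewise time-independent separations and $\Omega$ with extension domains, solve each approximating problem by a Galerkin scheme on a basis adapted to the frozen cross section, and pass to the limit using the $V_2$ energy estimate and the weak formulation. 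For sufficiently large $\lambda$ this produces existence, uniqueness, and the standard bound $\|Du\|_{L_2}+\lambda^{1/2}\|u\|_{L_2}\le C(\|g\|_{L_2}+\lambda^{-1/2}\|f\|_{L_2})$.

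The core of the argument is a model estimate for homogeneous constant-coefficient parabolic equations in a half cylinder $Q_R^+$ with the mixed boundary condition on $\{x^1=0\}$ and time-independent Lipschitz separation $\{x^2=\phi(x^3,\ldots,x^{m+2})\}$. Writing $x=(x',x'')$ with $x'=(x^1,\ldots,x^{m+2})$, the target is
$$
\|Du\|_{L_p^{x'}L_\infty^{t,x''}(Q_{1/2}^+)}\le C\|Du\|_{L_1(Q_1^+)},\qquad p\in [2,2(m+2)/(m+1)).
$$
Since the separation and coefficients are independent of $(t,x'')$, tangential difference quotients in these variables remain solutions, which yields $L_\infty$ control in $(t,x'')$ after a boundary Caccioppoli inequality; freezing $(t,x'')$ reduces the inner norm to an elliptic mixed problem on an $(m+2)$-dimensional slice, controlled by the Besov-type estimate of \cite{MR1452171,BC20} and the elliptic mixed-boundary estimate of \cite{arXiv:2003.10980}. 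This is the step I expect to be the main obstacle, because the exponent $2(m+2)/(m+1)$ is sharp and any loss here propagates into and destroys the final range of $p$.

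With the model estimate in hand, I would remove the flatness of $\partial\Omega$ and $\Gamma$ by the cutoff-and-reflection device of \cite{MR2835999,MR3013054}, previously used in the elliptic mixed setting in \cite{arXiv:2003.10980}. At a point $X_0\in\Gamma$ with $R\le R_0$, Assumption \ref{A11} places $\partial\Omega$ in a $\gamma R$-slab and $\Gamma$ within $\gamma R$ of a Lipschitz graph; multiplying by a spatial cutoff and reflecting across $\{x^1=0\}$ (with parity determined by the Dirichlet or conormal portion) yields an extension solving a model mixed problem with a perturbative right-hand side of size $O(\gamma)$. The novelty compared with the elliptic case is an extra $u_t$ term produced by the reflection; I plan to absorb it through a multi-step decomposition, splitting $u$ into pieces whose $u_t$ contributions are estimated successively in $\bH^{-1}_{p,\tilde{\cD}}$ via the previously established $L_2$ theory. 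Analogous but simpler reductions apply at $\partial\Omega\setminus\Gamma$ and at interior points, using the pure-Dirichlet, pure-conormal, and VMO theories under Assumption \ref{A2}.

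Finally I would assemble the pieces via a sharp-function inequality and the Caffarelli-Peral level set argument \cite{MR1486629}, together with the Krylov-Safonov crawling of ink spots lemma \cite{MR0579490,MR0563790}, to convert the pointwise mean-oscillation bound into the $L_p$ estimate \eqref{200812@eq2} for $\lambda\ge\lambda_0$. Existence in part (a) then follows by approximation of the data from the $L_2$ solution, and uniqueness from the a priori estimate by duality. Part (b) reduces to part (a) by extending $g,f$ by zero for $t<0$, solving the infinite-height problem with lower-order coefficients absorbed into $\lambda u$, and then removing $\lambda$ at the cost of a constant depending on $T$, $K$, and $R_0$ through a Gronwall-type argument in $t$.
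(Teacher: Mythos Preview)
Your proposal is correct and follows essentially the same route as the paper: $L_2$ solvability via Galerkin on piecewise time-independent approximations, the sharp $L_p^{x'}L_\infty^{t,x''}$ model estimate on the half cylinder (Proposition \ref{200826@prop1}), the cutoff-and-reflection decomposition handling the extra $u_t$ term (Proposition \ref{200726@prop1}), and the Caffarelli--Peral/crawling-of-ink-spots level set argument (Lemma \ref{200811@lem2}). One small refinement: in the paper the reflected $u_t$ term is controlled not through $\bH^{-1}$ but by first decomposing $\chi u=u^{(1)}+u^{(2)}$ so that $u^{(2)}$ solves a homogeneous constant-coefficient equation, then using that $u_t^{(2)}$ solves the same equation to get an $L_{p_0}$ bound on $U_t^{(2)}$ (Proposition \ref{200825@prop1}); part (b) is handled by the substitution $ue^{-\lambda_0 t}$ rather than a Gronwall step, though the two are equivalent.
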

\section{Auxiliary results} \label{sec3}
%========================================

Throughout this paper, we use the following notation.

\begin{notation}
For nonnegative (variable) quantities $A$ and $B$, we denote $A \lesssim B$ if there exists a generic positive constant $C$ such that $A \le CB$.
We add subscript letters like $A\lesssim_{a,b} B$ to indicate the dependence of the implicit constant $C$ on the parameters $a$ and $b$.
\end{notation}

%========================================
\subsection{\texorpdfstring{$L_2$}{L2} estimate}
%========================================

In this subsection, we prove the solvability of the mixed boundary value problem in $V_{2, \tilde{\cD}}(\tilde{\cQ})$
under the following assumption that the boundary portions $\cD$ and $\cN$ vary continuously in $t$, which is weaker than the condition $(b)$ in Assumption \ref{A11} $(\gamma;m,M)$; see Lemma \ref{211112@lem1}.

\begin{assumption}\label{ass-0301-2356}
For any $\epsi>0$ and $L>0$, there exist a time partition
$$
\max\{S,-L\}=t_0<t_1<\cdots<t_{n}=\min\{T,L\}
$$
and decompositions $\p\Omega=D^{t_k}\cup N^{t_k}$, $k\in \{1,\ldots, n\}$, such that
\begin{equation}		\label{200618@eq3}
\cD(t)\subset D^{t_k},\quad H^d(\cD(t), D^{t_k})<\epsi,\quad \forall t\in[t_{k-1},t_k),
\end{equation}
where $\cD(t)=\{x\in \partial \Omega: (t,x)\in \cD\}$ and $H^d$ is the usual $d$-dimensional Hausdorff distance.
\end{assumption}
	Note that Assumption \ref{ass-0301-2356} excludes the following possibility:
\begin{figure}[!hb]
\begin{center}
\begin{tikzpicture}
\draw [->] (-2,-3.3) -- (-2, 0) node (taxis) [right] {$t$};
\draw [->] (-2,-3.3) -- (2,-3.3) node (xaxis) [above] {$x$};
\draw (0,-0.2) ellipse (1.5 and 0.2);%topcircle
\draw (-1.5,-0.2) -- (-1.5,-3);%leftend
\draw[snake=zigzag, red] (-0.5,-0.4) -- (-0.5,-1.2);%Gamma
\draw[red] (-0.5,-1.2) arc (180:360:0.5 and 0.07);%Gamma
\draw[snake=zigzag, red] (0.5,-1.2) -- (0.5,-3.2);%Gamma
\draw (-1.5,-3) arc (180:360:1.5 and 0.2);%bottom lower half circle
\draw [dashed] (-1.5,-3) arc (180:360:1.5 and -0.2);%bottom upper half circle
\draw (1.5,-0.2) -- (1.5,-3);  %rightend
\fill [gray,opacity=0.3] (-1.5,-0.2) -- (-1.5,-3) arc (180:360:1.5 and 0.2) -- (1.5,-0.2) arc (0:180:1.5 and -0.2);%fill front surface
\node[left] at (0,-0.6) {$\Gamma$};
\node at (1,-1.8) {$\cD$};
\node at (-0.8,-1.8) {$\cN$};
\node at (0.8,-0.2) {$\cQ$};
% \node at (1.2,-0.6) {$\p_l\cQ$};
\end{tikzpicture}
\end{center}
\end{figure}

In the proposition below, we do not impose any regularity  assumptions on $a^{ij}=a^{ij}(t,x)$. It generalizes the classical solvability result in \cite{MR0241822} by allowing a rough $\Omega$ and a time-varying $\Gamma$.

\begin{proposition} 		\label{200810@prop1}
Let $\Omega$ be a bounded or unbounded domain in $\bR^d$.
Under Assumption \ref{ass-0301-2356}, there exists $\lambda_0\ge 0$ depending only on $d$, $\Lambda$, and $K$ such that the following assertions hold.
For any  $\lambda\ge \lambda_0$, $g=(g_1,\ldots, g_d)\in L_2(\tilde{\cQ})^d$, $f\in L_2(\tilde{\cQ})$, and $\psi\in L_2(\Omega)$,
there exists a unique $u\in V_{2,\tilde{\cD}}(\tilde{\cQ})$ satisfying
\begin{equation}		\label{200324@eq1}
\begin{cases}
\cP u-\lambda u = D_ig_i +f & \text{in }\, \tilde{\cQ},\\
\cB u = g_in_i& \text{on }\, \tilde{\cN},\\
u = 0 & \text{on }\, \tilde{\cD},\\
u=\psi  &\text{on }\, \{S\}\times \Omega \quad \text{if }\, S>-\infty.
\end{cases}
\end{equation}
Moreover, we have
\begin{equation} \label{est-0302-0000}
\norm{Du}_{L_2(\tilde{\cQ})}  + \lambda^{1/2}\norm{u}_{L_2(\tilde{\cQ})} \lesssim_{d, \Lambda} \norm{g}_{L_2(\tilde{\cQ})} + \lambda^{-1/2}\norm{f}_{L_2(\tilde{\cQ})}+\|\psi\|_{L_2(\Omega)}
\end{equation}
and
\begin{equation}
                        \label{eq8.02}
\norm{u}_{L_{\infty,2}(\tilde{\cQ})} \le \|\psi\|_{L_2(\Omega)}+C(d,\Lambda)\big(\norm{g}_{L_2(\tilde{\cQ})} + \lambda^{-1/2}\norm{f}_{L_2(\tilde{\cQ})}\big).
\end{equation}
Here the initial condition $u=\psi$ and the term $\|\psi\|_{L_2(\Omega)}$ in the estimates only appear when $S>-\infty$.
\end{proposition}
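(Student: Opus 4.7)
The plan is to construct a solution by a time discretization combined with a Galerkin approximation, exploiting the fact that Assumption \ref{ass-0301-2356} lets us replace the time-varying $\cD$ by a piecewise time-independent set. Fix $L>0$ and $\epsi>0$, and take a partition $\max\{S,-L\}=t_0<\cdots<t_n=\min\{T,L\}$ with decompositions $\partial\Omega=D^{t_k}\cup N^{t_k}$ as in that assumption. On the slabs $\cQ_k:=(t_{k-1},t_k)\times\Omega$ I would build an approximation $u^{\epsi,L}$ inductively by solving the equation with homogeneous Dirichlet data on the time-independent set $D^{t_k}$ and conormal data on $N^{t_k}$, using as initial value at $t_{k-1}$ the terminal value of the previous slab (or $\psi$ when $k=1$ if $S>-\infty$, and zero at $t_0=-L$ otherwise).

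Because $D^{t_k}$ is time-independent within each slab, each slab problem is standard and can be solved by the Galerkin method in the separable Hilbert space $\{v\in W^{1,2}(\Omega):v=0\text{ on }D^{t_k}\}$. Since the trial and test spaces coincide and do not depend on $t$, Steklov averaging is applicable on the slab and I can test with $u^{\epsi,L}$ itself. Ellipticity, the bounds on $|a^i|+|b^i|+|c|$, Cauchy--Schwarz, and Young's inequality then yield a slab energy inequality; choosing $\lambda_0$ large depending on $d,\Lambda,K$ to absorb the lower-order terms and telescoping over $k$ produces \eqref{est-0302-0000} and \eqref{eq8.02} uniformly in $\epsi$ and $L$.

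Next I would send $\epsi\to 0$ and, if needed, $L\to\infty$, and pass to the limit by weak$^*$ compactness in $L_{\infty,2}(\tilde{\cQ})$ together with weak compactness of $Du^{\epsi,L}$ in $L_2(\tilde{\cQ})$. The crucial observation is that $u^{\epsi,L}$ vanishes on the enlarged set $D^{t_k}\supset\cD(t)$ for $t\in[t_{k-1},t_k)$, so each approximation already belongs to $V_{2,\tilde{\cD}}(\tilde{\cQ})$; this space is weakly closed in $V_2(\tilde{\cQ})$, hence the limit is admissible. Conversely, any test function $\varphi\in C^\infty_{\tilde{\cD}}(\tilde{\cQ})$ is supported away from $\tilde{\cD}$, so by the Hausdorff bound $H^d(\cD(t),D^{t_k})<\epsi$ it also vanishes on each $D^{t_k}$ once $\epsi$ is small enough, which makes $\varphi$ admissible against $u^{\epsi,L}$ and allows the weak formulation \eqref{200508@eq3} to pass to the limit.

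For uniqueness, since direct testing of the limit equation with $u$ is obstructed (Steklov averaging fails for a genuinely time-varying $\cD$), I would instead solve the adjoint mixed problem backward in time, which has exactly the same structure with the roles of past and future swapped, and use its solution as a test function to force the homogeneous solution to vanish. The main obstacle throughout is the passage to the limit in the third step: reconciling the approximating time-piecewise boundary condition with the genuine time-varying one, and the Hausdorff-smallness built into Assumption \ref{ass-0301-2356} is precisely the ingredient that makes this compatibility possible.
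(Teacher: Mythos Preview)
Your proposal follows essentially the same route as the paper: reduce to slabs with time-independent separation via Assumption~\ref{ass-0301-2356}, solve each slab, piece the solutions together with uniform energy bounds, and pass to the limit using weak compactness together with the Hausdorff-closeness to show that any $\varphi\in C^\infty_{\tilde\cD}(\tilde\cQ)$ eventually becomes an admissible test function for the approximants. That last compatibility observation is exactly the paper's key point.

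Two places where the paper is more careful than your sketch. First, the slab solvability: for a general (possibly unbounded or irregular) $\Omega$ the Galerkin argument is not immediate, since the eigenfunction basis one usually employs requires the compact embedding $W^1_2(\Omega)\hookrightarrow L_2(\Omega)$. The paper handles this by first solving on smooth bounded approximating domains $\Omega^k\nearrow\Omega$ (with suitably defined approximating $N^k$, $D^k$) and then passing to the limit; you should either do the same or justify a Galerkin scheme that avoids compactness. Second, for uniqueness the paper does not spell out an argument; your backward-adjoint idea is a reasonable way to close this, but note that the adjoint solution lives only in $V_2$, so you will need a density/approximation step to use it against $u$ in the weak formulation.
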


The rest of this subsection is devoted to the proof of Proposition \ref{200810@prop1}. In the following, we call a domain $\Omega(\subset \bR^d)$ an extension domain if it is bounded and admits an bounded extension $W^1_2(\Omega)\rightarrow W^1_2(\mathbb{R}^d)$. In particular, the compact embedding $W^1_{2,D}(\Omega)\hookrightarrow L_2(\Omega)$ holds. The following lemma should be classical, which we state explicitly for  completeness.
\begin{lemma}		\label{190619@lem1}
Let $\Omega$ be an extension domain in $\bR^d$ and $D\subset \partial \Omega$. Then there exists an orthogonal basis $\{w_i\}$ for $W^{1}_{2,D}(\Omega)$ satisfying
$$
\int_\Omega w_i  w_j\,dx=\delta_{ij},
$$
where $\delta_{ij}$ is the Kronecker delta symbol.
\end{lemma}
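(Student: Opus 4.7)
The plan is to realize $\{w_i\}$ as the eigenfunctions of the solution operator for the mixed boundary value problem for $-\Delta+1$, and invoke the spectral theorem for compact self-adjoint operators on $L_2(\Omega)$. Concretely, let $a(u,v):=\int_\Omega(\nabla u\cdot\nabla v+uv)\,dx$; this coincides with the $W^1_2$-inner product and is therefore coercive on $W^1_{2,D}(\Omega)$. By Lax--Milgram, for each $f\in L_2(\Omega)$ there is a unique $Tf\in W^1_{2,D}(\Omega)$ with $a(Tf,\varphi)=\int_\Omega f\varphi\,dx$ for every $\varphi\in W^1_{2,D}(\Omega)$; post-composing with the embedding $W^1_{2,D}(\Omega)\hookrightarrow L_2(\Omega)$ yields a bounded $T:L_2(\Omega)\to L_2(\Omega)$. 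I would then verify that $T$ is (i) compact, immediate from the hypothesis that the embedding is compact; (ii) self-adjoint, since $\int_\Omega Tf\cdot g\,dx=a(Tf,Tg)=\int_\Omega f\cdot Tg\,dx$ by symmetry of $a$; and (iii) injective, since $C_c^\infty(\Omega)\subset W^1_{2,D}(\Omega)$ is dense in $L_2(\Omega)$, so $Tf=0$ forces $\int_\Omega f\varphi\,dx=0$ for all $\varphi\in C_c^\infty(\Omega)$, and hence $f\equiv 0$.

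The spectral theorem then supplies an $L_2$-orthonormal basis $\{w_i\}$ of $L_2(\Omega)$ with $Tw_i=\mu_i w_i$ and $\mu_i>0$, so each $w_i$ lies in $W^1_{2,D}(\Omega)$ and satisfies $a(w_i,\varphi)=\mu_i^{-1}\int_\Omega w_i\varphi\,dx$ for all $\varphi\in W^1_{2,D}(\Omega)$. Choosing $\varphi=w_j$ gives $a(w_i,w_j)=\mu_i^{-1}\delta_{ij}$, so the $\{w_i\}$ are $W^1_2$-orthogonal as well, and the required normalization $\int_\Omega w_iw_j\,dx=\delta_{ij}$ is built in. To see that they span $W^1_{2,D}(\Omega)$, suppose $v\in W^1_{2,D}(\Omega)$ is $W^1_2$-orthogonal to every $w_i$: symmetry of $a$ gives $\mu_i^{-1}\int_\Omega vw_i\,dx=a(w_i,v)=a(v,w_i)=0$, so $v$ is $L_2$-orthogonal to the $L_2$-basis $\{w_i\}$ and therefore $v\equiv 0$. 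Since the result is flagged as classical, I do not expect any serious obstacle; the only mild point worth checking is that the natural definition of $W^1_{2,D}(\Omega)$ as the closure of smooth functions vanishing near $D$ does contain $C_c^\infty(\Omega)$, which is what the injectivity of $T$ needs.
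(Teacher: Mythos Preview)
Your proposal is correct and follows essentially the same approach as the paper, which simply refers to the argument in \cite[\S 6.5]{MR2597943}: construct the solution operator for the Laplacian (you use $-\Delta+1$, a harmless variant that sidesteps the kernel when $D=\emptyset$), exploit the compact embedding $W^1_2(\Omega)\hookrightarrow L_2(\Omega)$ to make it compact and self-adjoint, and read off the basis from the spectral theorem. The paper omits the details you have spelled out; your write-up is a faithful elaboration of the cited argument.
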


\begin{proof}
The lemma follows from the same argument used in \cite[\S 6.5]{MR2597943}, by finding Laplacian eigenfunctions with the help of the compact embedding $W^1_2(\Omega)\hookrightarrow L_2(\Omega)$ and the spectral theory for compact operators.
We omit the details.
\end{proof}

Using Lemma \ref{190619@lem1} and the Galerkin method, we obtain the following $L_2$ solvability of the mixed problem \eqref{200324@eq1} with a cylindrical separation.

\begin{lemma}		\label{200810@LEM1}
Proposition \ref{200810@prop1} holds when $\tilde{\cD}$ and $\tilde{\cN}$ are time-independent.
\end{lemma}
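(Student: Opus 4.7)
The plan is to apply the Galerkin method using the orthonormal basis of $W^1_{2,D}(\Omega)$ provided by Lemma \ref{190619@lem1}, exploiting that $\tilde{\cD}$ and $\tilde{\cN}$ are time-independent so that one may write $\tilde{\cD}=(S,T)\times D$ and $\tilde{\cN}=(S,T)\times N$ for fixed $D,N\subset\partial\Omega$. First I would reduce to the core case where $\Omega$ is a bounded extension domain and $-\infty<S<T<\infty$: unbounded $\Omega$ is exhausted by an increasing sequence of bounded extension subdomains $\Omega_k\nearrow\Omega$ (imposing a zero Dirichlet condition on the auxiliary interior boundary), while $S=-\infty$ is handled by solving on $(S_k,T)$ with $S_k\to -\infty$ and zero initial data. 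In each approximation, the estimates \eqref{est-0302-0000}--\eqref{eq8.02} are uniform in $k$, and when $S=-\infty$ the $\lambda^{1/2}\|u\|_{L_2}$ term together with $\lambda\ge \lambda_0$ supplies the decay needed to pass to the limit by weak compactness.

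In the core case, let $\{w_i\}$ be the basis of $W^1_{2,D}(\Omega)$ from Lemma \ref{190619@lem1}, orthonormal in $L_2(\Omega)$, and seek
$$
u_n(t,x)=\sum_{i=1}^n c_i^n(t)\,w_i(x),\qquad u_n(S,\cdot)=\sum_{i=1}^n \Bigl(\int_\Omega \psi\, w_i\,dx\Bigr)w_i,
$$
satisfying the weak equation tested against $w_j$ for $1\le j\le n$. This is a linear ODE system in $(c_i^n)$ with continuous coefficients, uniquely solvable on $[S,T]$. Since $u_n(t)\in W^1_{2,D}(\Omega)$ and $u_n$ is smooth in $t$, multiplying the $j$th equation by $c_j^n(t)$ and summing is equivalent to testing the equation with $u_n$ itself. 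Using ellipticity, the bound $|a^i|+|b^i|+|c|\le K$, and Young's inequality to absorb the first- and zeroth-order contributions into the elliptic term and $\lambda\|u_n\|_{L_2(\Omega)}^2$, for $\lambda_0=\lambda_0(d,\Lambda,K)$ sufficiently large I obtain a differential inequality
$$
\tfrac{d}{dt}\|u_n\|_{L_2(\Omega)}^2+\Lambda\|Du_n\|_{L_2(\Omega)}^2+\lambda\|u_n\|_{L_2(\Omega)}^2\lesssim \|g\|_{L_2(\Omega)}^2+\lambda^{-1}\|f\|_{L_2(\Omega)}^2.
$$
Integrating in $t$, together with $\|u_n(S,\cdot)\|_{L_2(\Omega)}\le \|\psi\|_{L_2(\Omega)}$, yields the $V_2$ bound \eqref{est-0302-0000} and the $L_{\infty,2}$ bound \eqref{eq8.02} for $u_n$ uniformly in $n$.

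Weak compactness then produces a subsequential limit $u\in V_{2,\tilde{\cD}}(\tilde{\cQ})$ satisfying \eqref{200324@eq1}, the bounds being inherited by lower semicontinuity; the initial condition is recovered by passing to the limit in the integral identity \eqref{200508@eq3} with a test function $\varphi\in C^\infty_{\tilde{\cD}}$ not vanishing at $t=S$. For uniqueness, I would apply the Steklov average $u^h(t,x)=h^{-1}\int_t^{t+h} u(s,x)\,ds$ to a solution of the homogeneous problem with $\psi=0$; since $\tilde{\cD}$ is time-independent, $u^h$ preserves the mixed boundary conditions and is therefore admissible as a test function, so letting $h\to 0$ produces the $L_2$ energy identity forcing $u\equiv 0$. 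The main delicate point, I expect, is ensuring compatibility of the exhaustion $\Omega_k\nearrow\Omega$ with $D$ in the unbounded case, so that the Galerkin approximants on $\Omega_k$ satisfy a mixed Dirichlet-conormal condition approximating the original one in the limit; this is precisely what time-independence of the separation makes possible and is why the Steklov averaging trick, which fails in the general setting of the main theorem, is available here.
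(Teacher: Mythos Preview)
Your proposal is correct and follows essentially the same route as the paper: reduce to finite $S,T$ and a bounded extension domain, run the Galerkin scheme using the basis of Lemma~\ref{190619@lem1} to obtain the uniform $V_2$ bounds, and then recover the general (possibly unbounded or irregular) $\Omega$ by an interior exhaustion $\Omega^k\nearrow\Omega$ with Dirichlet data on the artificial boundary and conormal data on the portion near $N$, passing to the limit by weak compactness. The paper handles $S=-\infty$ by approximating the data by functions with compact $t$-support rather than by cutting the time interval, and it gives an explicit construction of the exhausting domains and of $N^k=\{x\in\partial\Omega^k:\operatorname{dist}(x,N)<1/k\}$, but these are minor variants of what you describe; your identification of the domain-exhaustion compatibility with $D$ and $N$ as the delicate point is exactly right.
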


\begin{proof}
Approximating $g$ and $f$ by $L_2$ functions with compact support in time, it suffices to consider the case when $-\infty<S<T<\infty$. When the spatial domain $\Omega$ is a bounded and regular (i.e., an extension domain), the lemma follows from the standard Galerkin method  together with Lemma \ref{190619@lem1}. The energy inequalities \eqref{est-0302-0000} and \eqref{eq8.02} are standard, with constants independent of the spatial domain. See, for example, \cite[\S 4, Chapter III]{MR0241822}.

When $\Omega$ is unbounded or irregular, we take a sequence of expanding smooth domains $\Omega^k\subset \Omega\cap B_k$ such that for any $x\in (\partial \Omega^k)\cap B_k$, $\operatorname{dist}(x,\partial \Omega)<1/k$, i.e., $\Omega^k\nearrow \Omega$ as $k\to \infty$.
Set
$$
N^k=\{x\in \partial \Omega^k: \operatorname{dist}(x, N)<1/k\},
$$
where $N=\{x\in \partial \Omega:(t,x)\in \tilde{\cN}\}$.
We also set
$$
\mathsf{Q}^k= (S,T)\times \Omega^k,\quad \mathsf{N}^k = (S,T)\times N^k,\quad \mathsf{D}^k=(S,T)\times (\p\Omega^k\setminus N^k).
$$
We may assume that $k$ is sufficiently large so that $\mathsf{D}^k$ and $\mathsf{N}^k$ are not empty.
Now we solve the mixed problem in $\mathsf{Q}^k$ with zero Dirichlet boundary condition on $\mathsf{D}^k$, homogeneous conormal boundary condition on $\mathsf{N}^k$, and initial data $\psi$. Since $\Omega^k$ is an extension domain, the solution $u^k\in V_{2, \mathsf{D}^k}(\mathsf{Q}^k)$ exists and is uniformly bounded.
We extend $u^k$ to be zero, still denoted by $u^k$, on
$$
\tilde{\mathsf{Q}}^k:=(S,T)\times\{ x\in \Omega\setminus \Omega^k:\, \operatorname{dist}(x,N)>2/k\}.
$$
Then $u^k$ is in $V_2(\mathsf{Q}^k\cup \tilde{\mathsf{Q}}^k)$ and vanishes on
$$
\{(t,x)\in \tilde\cD: \operatorname{dist}(x,N)>2/k\}.
$$
By a diagonal argument, we can pick a  subsequence, again denoted by $u^k$, such that as $k\to \infty$,
$$
u^{k} \rightharpoonup u, \quad Du^{k} \rightharpoonup Du \quad \text{weakly in }\, L_2((S,T)\times K),
$$
$$
u^{k}\overset{\ast}{\rightharpoonup} u \quad \text{weakly$^*$ in }\, L_{\infty,2}((S,T)\times K)
$$
for any compact set $K\subset \Omega$.
Due to the estimates \eqref{est-0302-0000} and \eqref{eq8.02}, which are uniform with respect to $k$, by taking the limit in the weak formulation and using H\"older's inequality, we see that $u\in V_{2,\tilde\cD}(\tilde{Q})$ satisfies \eqref{200324@eq1} as well as the estimates \eqref{est-0302-0000} and \eqref{eq8.02}.
The lemma is proved.
\end{proof}

We are ready to prove Proposition \ref{200810@prop1}.

\begin{proof}[Proof of Proposition \ref{200810@prop1}]
Again, by the standard approximation argument, it suffices to consider the case when $-\infty<S<T<\infty$.
Let $\varepsilon>0$ be given.
By Assumption \ref{ass-0301-2356}, there exists
a time partition
$$
S=t_0<t_1<\cdots<t_n=T
$$
and decompositions $\partial \Omega=D^{t_k}\cup N^{t_k}$, $k\in \{1,2,\ldots,n\}$, such that \eqref{200618@eq3} holds.
From Lemma \ref{200810@LEM1}, for $k\in \{1,2,\ldots, n\}$,  there exists a unique
$u^k_{\varepsilon}\in V_2^{0,1}((t_{k-1},t_k)\times \Omega)$
satisfying
\begin{equation}		\label{200307@A1}
\begin{cases}
\cP u_\varepsilon^k-\lambda u_\varepsilon^k = D_ig_i +f & \text{in }\, (t_{k-1},t_k)\times \Omega,\\
\cB u_{\varepsilon}^k = g_in_i& \text{on }\, (t_{k-1},t_k)\times  N^{t_k},\\
u_\varepsilon^k = 0 & \text{on }\, (t_{k-1},t_k)\times D^{t_k},\\
u_\varepsilon^k=u^{k-1}_{\varepsilon} & \text{on }\, \{t_{k-1}\}\times \Omega,
\end{cases}
\end{equation}
where $u^0_{\varepsilon}=\psi$, with the estimates
$$
\begin{aligned}
&\|Du_\varepsilon^k\|_{L_2((t_{k-1},t_k)\times \Omega)}
+\lambda^{1/2} \|u_\varepsilon^k\|_{L_2((t_{k-1},t_k)\times \Omega)}\\
&\lesssim_{d,\Lambda} \|g\|_{L_2((t_{k-1},t_k)\times \Omega)}+\lambda^{-1/2}\|f\|_{L_2((t_{k-1},t_k)\times \Omega)}+\|u_\varepsilon^{k-1}(t_{k-1},\cdot)\|_{L_2(\Omega)}
\end{aligned}
$$
and
\begin{align*}
 &\|u_\varepsilon^k(t,\cdot)\|_{L_{\infty,2}((t_{k-1}, t_k)\times \Omega)}\\
 &\le \|u_\varepsilon^{k-1}(t_{k-1},\cdot)\|_{L_2(\Omega)}+
 C(d,\Lambda)\big(\|g\|_{L_2((t_{k-1},t_k)\times \Omega)}+\lambda^{-1/2}\|f\|_{L_2((t_{k-1},t_k)\times \Omega)}\big).
\end{align*}
We then see that  the function $u_\varepsilon$ defined by
$$
u_\varepsilon=\sum_{k=1}^{n} u^k_{\varepsilon}\bI_{(t_{k-1},t_k)\times \Omega} \quad \text{in }\, \tilde{\cQ}=(S,T)\times \Omega
$$
belongs to $V_{2, \tilde{\cD}}(\tilde{\cQ})$ and that the estimates \eqref{est-0302-0000} and \eqref{eq8.02} hold with $u_{\varepsilon}$ in place of $u$.
Thus from the weak compactness and Alaoglu's theorems, there exist a subsequence of $\{u_\varepsilon\}$, denoted by $\{u_{\varepsilon_i}\}$, and a function $u\in V_{2,\tilde{\cD}}(\tilde{\cQ})$ such that
$$
u_{\varepsilon_i} \rightharpoonup u, \quad Du_{\varepsilon_i} \rightharpoonup Du \quad \text{weakly in }\, L_2(\tilde{\cQ}),
$$
$$
u_{\varepsilon_{i}}\overset{\ast}{\rightharpoonup} u \quad \text{weakly$^*$ in }\, L_{\infty,2}(\tilde{\cQ}).
$$
Moreover, $u$ satisfies  \eqref{est-0302-0000} and \eqref{eq8.02}.

To prove that  the limit function $u$ satisfies  \eqref{200324@eq1}, we let $\varphi\in C^\infty_{\tilde{\cD}}(\tilde{\cQ})$ vanishing for $t=T$.
Since $\varphi$ vanishes in a neighborhood of $\tilde{\cD}$ and has the compact support, we see that
$$
\operatorname{dist}(\operatorname{supp}\varphi, \tilde{\cD})>0.
$$
Hence, for sufficiently large $i$, $\operatorname{dist}(\operatorname{supp}\varphi, \tilde{\cD})>\epsi_i$, which in turn implies that $\varphi$ vanishes in a neighborhood of
$$
\bigcup_{k=1}^{n_{i}} (t_{k-1}^{(i)},t_k^{(i)})\times  D^{t_k^{(i)}},
$$
where $t_k^{(i)}$ and $D^{t_k^{(i)}}$ are given in Assumption \ref{ass-0301-2356}.
By testing  \eqref{200307@A1} with $\varphi$, we have
$$
-\int_\Omega u_{\varepsilon_{i}}^k(t_k^{(i)},\cdot)\varphi(t_k^{(i)},\cdot)\,dx+\int_{t_{k-1}^{(i)}}^{t_k^{(i)}}\int_{\Omega} u_{\varepsilon_{i}}^k \varphi_t\,dx\,dt
$$
$$
+\int_{t_{k-1}^{(i)}}^{t_k^{(i)}}\int_{\Omega} (-a^{ij}D_j u_{\varepsilon_{i}}^kD_i \varphi-a^i u_{\varepsilon_{i}}^kD_i \varphi+b^i D_i u_{\varepsilon_{i}}^k\varphi+(c-\lambda)u_{\varepsilon_{i}}^k\varphi)\,dx\,dt
$$
$$
=-\int_{t_{k-1}^{(i)}}^{t_k^{(i)}}\int_{\Omega}(g_i D_i \varphi-f\varphi)\,dx\,dt-\int_\Omega u_{\varepsilon_{i}}^{k-1}(t_{k-1}^{(i)}, \cdot) \varphi(t_{k-1}^{(i)},\cdot)\,dx.
$$
Thus, by taking summation with respect to $k$, and then sending $i\rightarrow\infty$, we see that $u$ satisfies \eqref{200508@eq3}.
The proposition is proved.
\end{proof}

%========================================
\subsection{Poincar\'e and embedding inequalities}		\label{S3-2}
%========================================

In this subsection, we
present some inequalities suitable to our problem. The first lemma is a Sobolev-Poincar\'e inequality on small Reifenberg flat domains, proved in \cite{{MR3809039}}.

\begin{lemma}[{\cite[Theorem~3.5]{MR3809039}}]		\label{200708@lem2}
Let $p\in (1,d)$ and $\Omega$ be a  domain in $\bR^d$ satisfying Assumption \ref{A11} (a) with $\gamma\in(0,1/48]$, and let $x_0\in \partial \Omega$ and $R\in (0, R_0/4]$.
Then for any $u\in W^1_p(\Omega_{2R}(x_0))$, we have
$$
\|u-(u)_{\Omega_R(x_0)}\|_{L_{dp/(d-p)}(\Omega_R(x_0))}\lesssim_{d,p} \|Du\|_{L_p(\Omega_{2R}(x_0))}.
$$
\end{lemma}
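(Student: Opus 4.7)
The plan is to reduce the claim to the classical Sobolev--Poincar\'e inequality on a Euclidean ball by exhibiting a bounded extension $W^{1}_{p}(\Omega_{2R}(x_{0}))\to W^{1}_{p}(B_{3R/2}(x_{0}))$ whose operator norm depends only on $d$ and $p$. The smallness $\gamma\le 1/48$ is the crucial ingredient: it guarantees that $\Omega_{2R}(x_{0})$ is quantitatively close to a half-ball on every scale, so that it fits into the class of Jones-type (uniform) extension domains with a uniform constant.

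After translating so that $x_{0}=0$, I would fix a coordinate system coming from Assumption \ref{A11}(a) at scale $2R$, in which
\[
\{y:y^{1}>2\gamma R\}\cap B_{2R}\;\subset\;\Omega_{2R}\;\subset\;\{y:y^{1}>-2\gamma R\}\cap B_{2R}.
\]
Decompose $B_{3R/2}$ into the \emph{safe half-ball} $H:=\{y^{1}>2\gamma R\}\cap B_{3R/2}$ (which is contained in $\Omega_{2R}$), the thin \emph{boundary slab} $\{|y^{1}|\le 2\gamma R\}\cap B_{3R/2}$, and its reflection $-H$. On $H$ the function $u$ is already given; on $-H$ I would define $Eu$ by reflection across $\{y^{1}=0\}$ from $H$, which is a bounded operation on $W^{1}_{p}$. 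The subtle region is the slab, which has thickness $\lesssim\gamma R\ll R$; here I would glue together local pieces using a Whitney-type partition of unity inside $\Omega_{2R}$, picking local extensions via reflection across a flat approximation of $\partial\Omega$ on each Whitney piece. The fact that $\gamma\le 1/48$ ensures that a single reflection on each Whitney piece overlaps the safe half-ball, and that the distortions accumulated over the slab are bounded by a universal constant.

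Once $Eu\in W^{1}_{p}(B_{3R/2})$ with $\|D(Eu)\|_{L_{p}(B_{3R/2})}\lesssim_{d,p}\|Du\|_{L_{p}(\Omega_{2R})}$ is in hand, I would apply the classical Sobolev--Poincar\'e inequality on the ball $B_{R}$:
\[
\bigl\|Eu-(Eu)_{B_{R}}\bigr\|_{L_{dp/(d-p)}(B_{R})}\;\lesssim_{d,p}\;\|D(Eu)\|_{L_{p}(B_{3R/2})}.
\]
Restricting back to $\Omega_{R}\subset B_{R}$ and using that replacing the constant $(Eu)_{B_{R}}$ with any other constant (in particular $(u)_{\Omega_{R}}$) changes the norm only by a constant multiple of the same quantity, I obtain the desired bound. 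The elementary fact needed for this last step is the measure comparison $|\Omega_{R}|\simeq |B_{R}|$, which is immediate from Assumption \ref{A11}(a) with $\gamma\le 1/48$.

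The main obstacle is Step~2: producing the extension operator with an operator norm independent of $R$ and of the precise location of $\partial\Omega$ inside the slab $\{|y^{1}|\le 2\gamma R\}$. This is where the Reifenberg smallness is used essentially; less flat domains do not admit a uniform Sobolev extension, and the range $p\in(1,d)$ (together with the $L_{dp/(d-p)}$ target exponent) is then dictated purely by the classical inequality applied on the ball.
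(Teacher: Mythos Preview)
The paper does not give a proof of this lemma; it is quoted directly from \cite[Theorem~3.5]{MR3809039} without argument, so there is no in-paper proof to compare against.

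Your overall architecture---extend $u$ from $\Omega_{2R}(x_0)$ to a full ball with a uniformly bounded operator, apply the classical Sobolev--Poincar\'e inequality there, then restrict back and swap constants using $|\Omega_R|\simeq|B_R|$---is sound, and the last two steps are correct as written. The difficulty is entirely in your Step~2, and what you have there is a heuristic rather than a proof. Two specific issues: (i) the single coordinate system at scale $2R$ does not control the geometry of $\partial\Omega$ inside the slab at smaller scales; a correct construction must use the Reifenberg condition at \emph{every} scale $\le 2R$ and \emph{every} center on $\partial\Omega\cap B_{2R}$, which is exactly what a Whitney decomposition of the complement together with a cube-matching argument (Jones' extension theorem for $(\varepsilon,\delta)$-domains) does; (ii) your description does not explain how the slab piece agrees with $u$ on $\Omega_{2R}\cap\{|y^1|\le 2\gamma R\}$ and matches the reflected piece on $-H$ in $W^1_p$. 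So the proposal identifies the right target but stops short of the technical content.

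For comparison, the route taken in the cited reference (and the standard one in this literature) avoids building an extension altogether: one first shows that $\Omega_R(x_0)$ is a John domain with John constant depending only on $d$---this is where the Reifenberg flatness at all scales is used, via a corkscrew/Harnack-chain argument---and then invokes the Sobolev--Poincar\'e inequality for John domains (Bojarski; Haj\l asz--Koskela). That path is shorter because it reduces to a known black box tailored to exactly this conclusion, whereas the extension route requires reproducing a substantial piece of Jones' theorem.
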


When $u=0$ on a surface ball, we can remove the average from the left-hand side.
\begin{lemma}	\label{200508@lem1}
Let $\Omega$ be a  domain in $\bR^d$ satisfying Assumption \ref{A11} (a) with $\gamma\in(0,1/48]$.
Let $x_0\in \partial \Omega$, $R\in (0, R_0/4]$, and $u\in W^1_p(\Omega_{2R}(x_0))$. If there exist some $z_0\in \partial \Omega\cap B_R(x_0)$ and $\alpha\in (0,1)$ such that
\begin{equation*}
B_{\alpha R}(z_0)\subset B_R(x_0), \quad u=0 \, \text{ on }\, \partial \Omega\cap B_{\alpha R}(z_0),
\end{equation*}
then
\begin{equation}		\label{200812@A1}
\|u\|_{L_{dp/(d-p)}(\Omega_R(x_0))}\lesssim_{d,p, \alpha} \|Du\|_{L_p(\Omega_{2R}(x_0))},
\end{equation}
provided that $p\in (1,d)$, and
\begin{equation}		\label{201228@eq1}
\|u\|_{L_p(\Omega_R(x_0))}\lesssim_{d,p,\alpha}  R\|Du\|_{L_p(\Omega_{2R}(x_0))},
\end{equation}
provided that $p\in (1, \infty)$.
The same results hold for $x_0\in \Omega$ and $R\in (0, R_0/8]$ with $\Omega_{5R}(x_0)$ in place of $\Omega_{2R}(x_0)$.
\end{lemma}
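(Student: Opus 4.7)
The plan is to remove the mean from the Sobolev--Poincar\'e estimate of Lemma~\ref{200708@lem2} by showing that the zero trace of $u$ on $\partial\Omega \cap B_{\alpha R}(z_0)$ forces the mean $(u)_{\Omega_R(x_0)}$ to be small. I first handle the case $x_0 \in \partial\Omega$; the interior case then reduces to it by recentering at $z_0$.

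\emph{Local Poincar\'e near $z_0$.} Because $u$ has vanishing trace on $\partial\Omega \cap B_{\alpha R}(z_0)$, the zero extension $\tilde u := u\mathbf{1}_\Omega$ lies in $W^1_p(B_{\alpha R/2}(z_0))$ with $D\tilde u = \mathbf{1}_\Omega Du$. The Reifenberg flatness hypothesis with $\gamma \le 1/48$ ensures, in the coordinate system of Assumption~\ref{A11}(a), that $B_{\alpha R/2}(z_0) \setminus \Omega$ has measure at least a fixed fraction of $(\alpha R)^d$, so $\tilde u$ vanishes on a subset of $B_{\alpha R/2}(z_0)$ of positive density. Combining the standard Poincar\'e inequality on the Euclidean ball $B_{\alpha R/2}(z_0)$ with this vanishing set yields
\[
\|u\|_{L_p(\Omega_{\alpha R/2}(z_0))} \lesssim_{d,p,\alpha} R\, \|Du\|_{L_p(\Omega_{\alpha R/2}(z_0))},
\]
after which H\"older's inequality gives the local mean bound $|(u)_{\Omega_{\alpha R/2}(z_0)}| \lesssim R^{1-d/p}\|Du\|_{L_p(\Omega_{2R}(x_0))}$.

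\emph{Transfer of the mean.} Since $\Omega_{\alpha R/2}(z_0) \subset \Omega_R(x_0)$, the two means differ by at most $|\Omega_{\alpha R/2}(z_0)|^{-1}\int_{\Omega_R(x_0)} |u - (u)_{\Omega_R(x_0)}|\,dx$, which by Lemma~\ref{200708@lem2} combined with H\"older is again bounded by $C R^{1-d/p}\|Du\|_{L_p(\Omega_{2R}(x_0))}$; hence $|(u)_{\Omega_R(x_0)}|$ satisfies the same estimate. Plugging this into Lemma~\ref{200708@lem2} and using that $R^{1-d/p}\cdot R^{(d-p)/p} = 1$ proves \eqref{200812@A1}. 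The second inequality \eqref{201228@eq1} follows analogously from the Poincar\'e-with-mean estimate $\|u - (u)_{\Omega_R(x_0)}\|_{L_p(\Omega_R(x_0))} \lesssim R\|Du\|_{L_p(\Omega_{2R}(x_0))}$, which is valid for all $p \in (1,\infty)$ on Reifenberg flat domains (obtained from Lemma~\ref{200708@lem2} by H\"older interpolation when $p < d$ and by a standard embedding argument when $p \ge d$).

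\emph{Interior case and main obstacle.} For $x_0 \in \Omega$ with $R \in (0, R_0/8]$, I apply the boundary version just proved with new center $\tilde x_0 = z_0 \in \partial\Omega$, radius $\tilde R = 2R \le R_0/4$, and parameter $\tilde\alpha = \alpha/2$; the inclusions $\Omega_R(x_0) \subset \Omega_{2R}(z_0)$ and $\Omega_{4R}(z_0) \subset \Omega_{5R}(x_0)$ (both consequences of $z_0 \in B_R(x_0)$) convert the result into the stated form with $\Omega_{5R}(x_0)$ on the right. The principal obstacle is the opening step: one has to verify both the quantitative positive-density bound $|B_{\alpha R/2}(z_0) \setminus \Omega| \gtrsim (\alpha R)^d$ and the fact that zero-extending across the rough portion $\partial\Omega \cap B_{\alpha R}(z_0)$ actually produces a bona fide $W^1_p$ function. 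Both rest on the pinching of $\Omega$ between two half-spaces in Reifenberg coordinates, which tames the roughness of $\partial\Omega$ enough to make the extension procedure and the density lower bound work in spite of the lack of Lipschitz regularity.
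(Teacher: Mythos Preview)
Your proof is correct and follows essentially the same route as the paper, which simply cites \cite[Corollary~3.2\,(a)]{arXiv:1904.00545} for \eqref{200812@A1} and notes that \eqref{201228@eq1} follows by H\"older; your argument via zero extension, the positive-density Poincar\'e on $B_{\alpha R/2}(z_0)$, and mean transfer through Lemma~\ref{200708@lem2} is exactly the mechanism behind that cited corollary, and your chain of inclusions $\Omega_R(x_0)\subset \Omega_{2R}(z_0)\subset \Omega_{4R}(z_0)\subset \Omega_{5R}(x_0)$ for the interior case matches the paper verbatim. The one point you correctly flag---that the zero extension across the Reifenberg boundary piece lands in $W^1_p$---is indeed the only nontrivial step, and it is handled (as you indicate) by the half-space pinching, which lets one approximate by smooth functions vanishing near $\partial\Omega\cap B_{\alpha R}(z_0)$.
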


\begin{proof}
The estimate \eqref{201228@eq1} is a simple consequence of H\"older's inequality and \eqref{200812@A1}, the proof of which is the same as that of \cite[Corollary 3.2 $(a)$]{arXiv:1904.00545}. Here the chain of inclusions
$$
\Omega_R(x_0)\subset \Omega_{2R}(z_0) \subset \Omega_{4R}(z_0) \subset \Omega_{5R}(x_0)
$$
is also used when $x_0\in \Omega$.
\end{proof}

We have the following parabolic Sobolev-Poincar\'e inequalities with mixed norms, the proof of which is based on the embedding results in  \cite[Lemmas 5.3 and 5.4]{arXiv:2007.01986} along with Sobolev-Poincar\'e inequalities for each $x$ and $t$ variables.
We present the proof in Appendix \ref{appendix}.
Note that the usual zero extension technique as in \cite[Corollary~3.2]{arXiv:1904.00545} does not work for the parabolic case, since $u_t$ might not be in $\mathbb{H}^{-1}_{q,p}$ after the extension.

\begin{lemma}		\label{200820@lem2}
Let $\Omega$ be a domain in $\bR^d$ satisfying Assumption \ref{A11} (a) with $\gamma\in(0,1/48]$, and let  $X_0\in \overline{\cQ}$ and $R\in (0, R_0/4]$ such that either
$$
Q_{2R}(X_0)\subset \cQ \quad \text{or}\quad X_0\in \partial \cQ.
$$
If $p,q\in [1,\infty]$, $p_0\in [p,\infty]$, $q_0\in (q,\infty]$, and
$$
\frac{d}{p}+\frac{2}{q}< 1+\frac{d}{p_0}+\frac{2}{q_0},
$$
then for $u\in W^{0,1}_{q,p}(\cQ_{2R}(X_0))$ satisfying
$$
u_t=D_i g_i \quad \text{in }\, \cQ_{2R}(X_0)
$$
in the distribution sense, where $g=(g_1,\ldots, g_d)\in L_{q,p}(\cQ_{2R}(X_0))^d$,
we have
\begin{equation}\label{eqn-200919-1145}
\begin{aligned}
		&\|u-(u)_{\cQ_R(X_0)}\|_{L_{q_0, p_0}(\cQ_R(X_0))}\\
&		\lesssim_{d,p,q,p_0,q_0} R^{1+d/p_0+2/q_0-d/p-2/q} \big(\|Du\|_{L_{q,p}(\cQ_{2R}(X_0))}+\|g\|_{L_{q,p}(\cQ_{2R}(X_0))}\big).		
\end{aligned}
\end{equation}
If we further assume that there exist $Y_0\in \partial \cQ$ and $\alpha\in (0,1)$ such that
$$
u=0 \quad \text{on }\, Q_{\alpha R}(Y_0)\cap \p\cQ, \quad Q_{\alpha R}(Y_0)\subset Q_R(X_0),
$$
then we have
		\begin{equation}\label{eqn-200920-0425}
			\|u\|_{L_{q_0, p_0}(\cQ_R(X_0))}
			\lesssim_{d,p,q,p_0,q_0,\alpha} R^{1+d/p_0+2/q_0-d/p-2/q} \big(\|Du\|_{L_{q,p}(\cQ_{2R}(X_0))}+\|g\|_{L_{q,p}(\cQ_{2R}(X_0))}\big).
		\end{equation}
\end{lemma}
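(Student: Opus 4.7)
Following the strategy indicated by the citation to \cite[Lemmas 5.3 and 5.4]{arXiv:2007.01986}, I would combine slice-wise spatial Sobolev--Poincar\'e inequalities on Reifenberg-flat domains with a weighted time-average argument powered by the equation $u_t = D_i g_i$. For the interior case $Q_{2R}(X_0)\subset\cQ$ we have $\cQ_R(X_0)=Q_R(X_0)$, so the estimate reduces to the cylinder case already handled in \cite{arXiv:2007.01986}; the real work is the boundary case $X_0\in\partial\cQ$.

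For the boundary case, Assumption \ref{A11}(a) with $\gamma$ small produces a ball $B_\rho(y_0)\subset\Omega_R(x_0)$ with $\rho\sim R$. Fix a bump $\eta\in C_c^\infty(B_\rho(y_0))$ with $\eta\ge 0$, $\int\eta\,dx\sim R^d$, and $|D\eta|\lesssim R^{-1}$, and set
$$
\bar u_\eta(t) = \frac{1}{\int\eta\,dx}\int u(t,x)\eta(x)\,dx.
$$
Since $u_t = D_i g_i$ in the distribution sense, $\bar u_\eta'(t) = -\int g_i(t,\cdot)D_i\eta\,dx/\int\eta\,dx$, so H\"older's inequality yields the pointwise bound $|\bar u_\eta'(t)|\lesssim R^{-1-d/p}\|g(t,\cdot)\|_{L_p(\Omega_{2R}(x_0))}$. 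Next, using $\|u-(u)_{\cQ_R}\|\le 2\|u-c\|$ for any constant $c$, I take $c$ to be the time average of $\bar u_\eta$ on $(t_0-R^2,t_0)$ and split
$$
u - c = \bigl(u(t,\cdot)-\bar u_\eta(t)\bigr) + \bigl(\bar u_\eta(t) - c\bigr).
$$
The first summand is purely spatial at each fixed $t$; applying Lemma \ref{200708@lem2} slice-wise, together with a short triangle step comparing $\bar u_\eta(t)$ with $(u(t,\cdot))_{\Omega_R(x_0)}$, gives $\|u(t,\cdot)-\bar u_\eta(t)\|_{L_{p_0}(\Omega_R(x_0))}\lesssim R^{1+d/p_0-d/p}\|Du(t,\cdot)\|_{L_p(\Omega_{2R}(x_0))}$.

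The second summand depends only on $t$; a one-dimensional Poincar\'e inequality on $(t_0-R^2,t_0)$ combined with the control on $\bar u_\eta'$ yields, after multiplying by $|\Omega_R|^{1/p_0}\sim R^{d/p_0}$ to account for the spatial norm,
$$
\|\bar u_\eta-c\|_{L_{q_0,p_0}(\cQ_R(X_0))}\lesssim R^{1+d/p_0+2/q_0-d/p-2/q}\|g\|_{L_{q,p}(\cQ_{2R}(X_0))}.
$$
Summing the two pieces gives \eqref{eqn-200919-1145}. For part (b), I would replace the weighted-average reference by $c\equiv 0$ and use Lemma \ref{200508@lem1} in place of Lemma \ref{200708@lem2} to drop the mean on the left, obtaining \eqref{eqn-200920-0425}. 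The main obstacle I anticipate is synchronizing the spatial improvement (from Sobolev--Poincar\'e) with the temporal improvement (from $u_t=D_ig_i$) in the mixed-norm setting when both $p_0>p$ and $q_0>q$ simultaneously: neither a pure slice-wise spatial estimate nor a pure time-averaged estimate is enough on its own, and the Gagliardo--Nirenberg-type scaling condition $d/p+2/q<1+d/p_0+2/q_0$ is exactly what allows the two upgrades to be executed in tandem.
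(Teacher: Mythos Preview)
Your overall framework---introducing a spatially weighted time average $\bar u_\eta(t)$, choosing $c$ to be its time mean, and splitting $u-c$ into a spatial oscillation piece and a temporal oscillation piece---matches the paper's setup exactly. The gap is precisely the obstacle you flag at the end but do not resolve: your slice-wise application of Lemma~\ref{200708@lem2} only yields
\[
\|u(t,\cdot)-\bar u_\eta(t)\|_{L_{p_0}(\Omega_R)}\lesssim R^{1+d/p_0-d/p}\|Du(t,\cdot)\|_{L_p(\Omega_{2R})},
\]
and taking the $L_{q_0}$ norm in $t$ on the right requires $\|Du\|_{L_{q_0,p}}$, which you do not have since $q_0>q$. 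Asserting that the scaling condition ``allows the two upgrades to be executed in tandem'' is not a proof; the simultaneous upgrade from $L_{q,p}$ to $L_{q_0,p_0}$ is a genuine parabolic (anisotropic) embedding theorem, not a consequence of iterating one-variable Sobolev--Poincar\'e inequalities.

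The paper resolves this by reversing the order of the two steps. It first invokes \cite[Lemmas~5.3 and~5.4]{arXiv:2007.01986}---a parabolic embedding for functions with $u\in W^{0,1}_{q,p}$ and $u_t=D_ig_i$, $g\in L_{q,p}$---to obtain directly
\[
\|u-c\|_{L_{q_0,p_0}(\cQ_1)}\lesssim \|u-c\|_{L_{q,p}(\cQ_{3/2})}+\|Du\|_{L_{q,p}(\cQ_{3/2})}+\|g\|_{L_{q,p}(\cQ_{3/2})},
\]
which handles the $(q,p)\to(q_0,p_0)$ upgrade in one stroke. Only \emph{after} this does the paper split $u-c=(u-v)+(v-c)$ and apply the spatial Poincar\'e (Lemma~\ref{200708@lem2}) and the temporal Poincar\'e, but now merely to bound $\|u-c\|_{L_{q,p}}$---the \emph{same} exponents as the data---which your argument handles perfectly well. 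For part~(b), the paper does not set $c=0$; it keeps $c$ as a time average over $(s_0-\alpha^2,s_0)$ and then bounds $|c|$ itself via Lemma~\ref{200508@lem1}, using that $u(t,\cdot)$ vanishes on the relevant boundary piece only for $t$ in that subinterval. Your proposal to set $c=0$ and apply Lemma~\ref{200508@lem1} slice-wise would fail for $t\notin(s_0-(\alpha R)^2,s_0)$, where the vanishing hypothesis is unavailable.
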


To end this subsection, we prove the following inequality which will be frequently used in our cut-off argument.
\begin{lemma}		\label{200716@lem2}
Let $p\in (1, \infty)$ and Assumption \ref{A11} $(\gamma;m,M)$ be satisfied with $\gamma\in \bigg(0, \frac{1}{160 \sqrt{d+3}}\bigg]$, and let  $X_0\in \Gamma$, $R\in (0, R_0]$, and $\rho\in [R/8, R]$.
Then for any $u\in W^{0,1}_{p}(\cQ_{\rho}(X_0))$ vanishing on $\cD\cap Q_{\rho}(X_0)$, we have
$$
\| u\bI_{A} \|_{L_p(\cQ_{\rho/2}(X_0))}\lesssim_{d,M,p} \gamma \rho \| Du \|_{L_p(\cQ_{\rho}(X_0))},
$$
where
$$
A=\{(s,y): y^1<x_0^1+2\gamma  R, \, y^2>\phi-2\gamma R\}
$$
in the coordinate system associated with $(X_0, R)$.

\end{lemma}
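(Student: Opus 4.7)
The plan is to reduce the estimate to a local Poincar\'e--Friedrichs inequality after extending $u$ by zero across the Dirichlet part of $\p\Omega$. Working in the spatial coordinate system associated with $(X_0, R)$ from Assumption~\ref{A11}$(\gamma;m,M)$, we have $\p\Omega\cap B_R(x_0)\subset \{|y^1-x_0^1|<\gamma R\}$ and the Dirichlet part of $\p\cQ\cap \bQ_R(X_0)$ contains $\{y^2>\phi(y^3,\ldots,y^{m+2})+\gamma R\}$. Define the enlarged open set
\[
\tilde G = \Omega \cup \{y\in \bR^d: y^2 > \phi(y^3,\ldots,y^{m+2})+\gamma R\},
\]
and for each fixed time slice $s$ set $\tilde u(s,\cdot)=u(s,\cdot)$ on $\Omega$ and $\tilde u(s,\cdot)=0$ on $\tilde G\setminus \Omega$. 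Since $u$ vanishes on $\cD\cap Q_\rho(X_0)$ and the gluing occurs along $\p\Omega\cap \{y^2>\phi+\gamma R\}\subset \cD$, we obtain $\tilde u(s,\cdot)\in W^{1}_{p,\mathrm{loc}}(\tilde G)$ with $|D\tilde u|=|Du|\,\bI_\Omega$.

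For each $y\in A\cap \Omega_{\rho/2}(x_0)$, I would work on an \emph{anisotropic} rectangular box $P_y$ centered at $y$ with half-side $K\gamma R$ in the $y^1$, $y^2$, and $y^{m+3},\ldots,y^d$ directions, and half-side $\gamma R/(CM+1)$ in the $y^3,\ldots,y^{m+2}$ directions, for sufficiently large constants $K$ and $C$ (depending only on $d$). The diameter of $P_y$ is then $\lesssim_d \gamma R$, and the Lipschitz bound yields $|\phi(z^3,\ldots,z^{m+2}) - \phi(y^3,\ldots,y^{m+2})| \le M\sqrt{m}\gamma R/(CM+1) \le \gamma R/10$ for $z\in P_y$. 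Hence the corner
\[
P_y \cap \bigl\{z: z^1<x_0^1-\gamma R,\ z^2>\phi(y^3,\ldots,y^{m+2})+2\gamma R\bigr\}
\]
lies in $\tilde G\setminus \Omega$: Reifenberg flatness forces $\{z^1<x_0^1-\gamma R\}\cap B_R(x_0)\subset \Omega^c$, while the Lipschitz control upgrades $z^2>\phi(y^3,\ldots)+2\gamma R$ into $z^2>\phi(z^3,\ldots)+\gamma R$. A direct volume computation, using $y^1<x_0^1+2\gamma R$ and $y^2>\phi(y^3,\ldots)-2\gamma R$ (both consequences of $y\in A$), shows that this corner fills at least a definite fraction of $|P_y|$. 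Poincar\'e--Friedrichs on the convex box $P_y$ then gives
\[
\|\tilde u(s,\cdot)\|_{L_p(P_y)} \lesssim_{d,p} \gamma R\,\|Du(s,\cdot)\|_{L_p(P_y\cap \Omega)}.
\]

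The final step is to cover $A\cap \Omega_{\rho/2}(x_0)$ by grid-translated copies of $P_y$ (all of the same shape), producing a finite overlap of order $2^d$. The smallness hypothesis $\gamma\le 1/(160\sqrt{d+3})$ together with $R\le 8\rho$ keeps the enlarged union inside $\Omega_\rho(x_0)$. Summing, integrating in $s$, and taking $p$-th roots yields
\[
\|u\bI_A\|_{L_p(\cQ_{\rho/2}(X_0))} \lesssim_{d,M,p} \gamma R\,\|Du\|_{L_p(\cQ_\rho(X_0))} \lesssim_{d,M,p} \gamma\rho\,\|Du\|_{L_p(\cQ_\rho(X_0))}.
\]
The main obstacle is the geometric construction: the box must be thin in the $y^3,\ldots,y^{m+2}$ directions to neutralize the Lipschitz variation of $\phi$ (otherwise the positive density of the zero set would be destroyed when $M$ is large), yet thick enough in $y^1$ and $y^2$ for the Reifenberg-excluded slab together with the region above the Dirichlet separation to contribute a definite fraction of $|P_y|$.
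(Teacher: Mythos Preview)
Your overall strategy---covering $A\cap\Omega_{\rho/2}(x_0)$ by small sets of diameter $\sim\gamma R$ and applying a local Poincar\'e inequality on each---matches the paper's. The paper, however, stays inside $\Omega$ throughout: it covers by balls $\Omega_{2\sqrt{d+3}\gamma R}(z)$ centered at grid points $z=(\gamma R,k\gamma R)$ and invokes Lemma~\ref{200508@lem1}, the Reifenberg-adapted Poincar\'e inequality that only asks $u$ to vanish on a surface ball of $\partial\Omega$. This sidesteps any extension across the rough boundary.

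Your zero-extension route has a genuine gap. You need $P_y\subset\tilde G$ for $\tilde u\in W^1_p(P_y)$, but this can fail. Take $y\in A\cap\Omega$ with $y^2$ near the lower limit $\phi(y^3,\ldots)-2\gamma R$. With $K$ large enough for your corner to occupy a definite fraction of $P_y$ (you need at least $K\ge 4$ since $y^1$ may be as large as $x_0^1+2\gamma R$), the box $P_y$ contains points $z$ with $z^1<x_0^1-\gamma R$ (hence $z\notin\Omega$ by the Reifenberg inclusion $\Omega_R(x_0)\subset\{y^1>x_0^1-\gamma R\}$) and simultaneously $z^2\le\phi(z^3,\ldots)+\gamma R$ (the lower portion of the box). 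Such $z$ lie in neither component of $\tilde G$, so $\tilde u$ is undefined there and you cannot run Poincar\'e--Friedrichs on the full convex box. Restricting to $P_y\cap\tilde G$ does not rescue the argument either, since that intersection need not be convex---or even a John domain with controlled constant---because $\partial\Omega$ inside the slab $|z^1-x_0^1|<\gamma R$ is merely Reifenberg flat. The clean repair is to drop the extension and apply Lemma~\ref{200508@lem1} directly on $\Omega$-balls, which is exactly the paper's route; the anisotropy you introduce in the $y^3,\ldots,y^{m+2}$ directions then becomes unnecessary, because the surface-ball hypothesis of that lemma already absorbs the Lipschitz variation of $\phi$.
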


\begin{proof}
By translation we may assume that $X_0=(0,0)$.
Fix the coordinate system associated with the origin and $R$, and
we denote by $\cD_{grid}$ the set of all grid points $z=(\gamma R, k\gamma R)$, where  $k=(k_2,\ldots, k_d)\in \bZ^{d-1}$, such that
$$
z\in \Omega_{\rho/2}, \quad \Omega_{\sqrt{d+3}\gamma R}(z)\cap \{x:x^2>\phi\}\neq \emptyset.
$$
By Lemma \ref{200508@lem1} applied to $\Omega_{2\sqrt{d+3}\gamma R}(z)$, we have
$$
\|u(t,\cdot)\|_{L_p(\Omega_{2\sqrt{d+3}\gamma R}(z))}\lesssim_{d,M,p} \gamma R\|Du(t,\cdot)\|_{L_p(\Omega_{10\sqrt{d+3}\gamma R}(z))}.
$$
Since
$$
\big(\Omega_{\rho/2}\cap A\big) \subset \bigcup_{z\in \cD_{grid}}\Omega_{2\sqrt{d+3}\gamma R}(z)\subset \bigcup_{z\in \cD_{grid}}\Omega_{10\sqrt{d+3}\gamma R}(z)\subset \Omega_{\rho},
$$
we have that
$$
\|u(t,\cdot)\bI_A\|_{L_p(\Omega_{\rho/2})}\lesssim \gamma R\|Du(t,\cdot)\|_{L_p(\Omega_{\rho})},
$$
from which we get the desired estimate.
\end{proof}

%========================================
\subsection{Localization and reverse H\"older's inequality}		
%========================================
In this subsection, we assume that the lower-order coefficients of $\cP$ are all zero, i.e.,
$$
\cP u=-u_t+D_i (a^{ij}D_j u).
$$
We do not impose any regularity assumptions on $a^{ij}$.
Hereafter in this paper, we always assume that $T\in (-\infty, \infty]$.

We first localize the estimate in Proposition \ref{200810@prop1}.
\begin{lemma}		\label{200515@lem9}
Let  $\Omega\subset\bR^d$
and Assumption \ref{ass-0301-2356} be satisfied.
If
$u\in \cH^1_{2,\cD^T,{\rm{loc}}}(\cQ^T)$ satisfies
\begin{equation}		\label{200522@eq1}
\begin{cases}
\cP u = D_ig_i  & \text{in }\, \cQ^T,\\
\cB u = g_in_i& \text{on }\, \cN^T,\\
u = 0 & \text{on }\, \cD^T,
\end{cases}
\end{equation}
where $g=(g_1,\ldots, g_d)\in L_{2,\rm{loc}}(\cQ^T)^d$,
then
for any $X_0\in \overline{\cQ^T}$ and $R\in (0,1]$,  we have
$$
\|Du\|_{L_2(\cQ_{R/2}(X_0))}\lesssim_{d,\Lambda} R^{-1}\|u\|_{L_2(\cQ_R(X_0))}+\|g\|_{L_2(\cQ_R(X_0))}.
$$
\end{lemma}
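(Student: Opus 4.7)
The plan is to run the classical parabolic Caccioppoli argument: test the weak form of \eqref{200522@eq1} against $\eta^2 u$ for a suitable cutoff $\eta$, exploit ellipticity and Young's inequality, and absorb the gradient term on the left. Fix $X_0=(t_0,x_0)\in\overline{\cQ^T}$ and choose a tensor-product cutoff $\eta(t,x)=\zeta(t)\psi(x)\in C^\infty(\bR^{d+1})$ with $\eta\equiv 1$ on $Q_{R/2}(X_0)$, $\operatorname{supp}\eta\subset\overline{Q_R(X_0)}$, $\zeta(t_0-R^2)=0$, $|D_x\eta|\lesssim R^{-1}$, and $|\eta_t|\lesssim R^{-2}$. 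Because $u$ vanishes on $\cD^T$ in the trace sense, the product $\eta^2 u$ automatically lies in $W^{0,1}_{2,\cD^T,\mathrm{loc}}(\cQ^T)$; crucially, no condition is required on the Neumann portion $\cN^T$, so $\eta^2 u$ is an admissible test function.

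For each $t_1\in(t_0-R^2,t_0]$, the goal is to derive the energy identity
\begin{align*}
&\tfrac12\int_{\Omega_R(x_0)}(\eta u)^2(t_1,\cdot)\,dx + \int_{\cQ_R(X_0)\cap\{t<t_1\}} a^{ij}\eta^2 D_j u\,D_i u\,dX \\
&\quad = \int_{\cQ_R(X_0)\cap\{t<t_1\}}\bigl(\eta\eta_t u^2 - 2a^{ij}\eta u D_j u D_i\eta + g_i D_i(\eta^2 u)\bigr)dX.
\end{align*}
From here, applying $a^{ij}\xi_j\xi_i\ge\Lambda|\xi|^2$, Young's inequality on the cross and right-hand-side terms with a small parameter to absorb $\|\eta Du\|_{L_2}^2$ on the left, and taking the supremum over $t_1$ yields
\[
\|Du\|_{L_2(\cQ_{R/2}(X_0))}^2\lesssim R^{-2}\|u\|_{L_2(\cQ_R(X_0))}^2+\|g\|_{L_2(\cQ_R(X_0))}^2,
\]
from which the stated inequality follows by taking square roots.

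The main obstacle is justifying the $t$-integration by parts that produces the trace $(\eta u)^2(t_1,\cdot)$: $u_t$ belongs only to $\bH^{-1}_{2,\cD^T,\mathrm{loc}}$ and, as the authors stress in the introduction, classical Steklov averaging in $t$ is not compatible with the time-dependent Dirichlet set $\cD(t)$ (the average of $u(s,\cdot)$ over $s\in[t,t+h]$ vanishes only on $\bigcap_{s\in[t,t+h]}\cD(s)$, which is typically strictly smaller than $\cD(t)$), so the standard Lions-type result on weak $t$-continuity cannot be invoked off the shelf because the underlying Hilbert triple varies in $t$. My plan to handle this mirrors the strategy in the proof of Proposition \ref{200810@prop1}: invoke Assumption \ref{ass-0301-2356} to partition $(t_0-R^2,t_1)$ into finitely many slabs $(s_{k-1},s_k)$ on which the Dirichlet set is replaced by the time-independent $D^{s_k}$ (close to $\cD(t)$ in Hausdorff distance with tolerance $\varepsilon$), run the usual Steklov-average Caccioppoli computation on each slab where it is now legitimate, match the internal traces at $s_k$ using the embedding $\cH^1_{2,D^{s_k},\mathrm{loc}}\hookrightarrow C_t L^2_{x,\mathrm{loc}}$, sum to obtain the identity up to an error of order $\varepsilon$, and finally send $\varepsilon\to 0$ via the same weak$^*$ compactness argument used in the proof of Proposition \ref{200810@prop1}.
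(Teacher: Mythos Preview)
Your direct-testing approach differs from the paper's, and the fix you propose for the Steklov-averaging obstacle has a genuine gap. Assumption~\ref{ass-0301-2356} furnishes a time-independent set $D^{s_k}$ with $\cD(t)\subset D^{s_k}$ on each slab, so $u(t,\cdot)$ vanishes only on the \emph{smaller} set $\cD(t)$, not on $D^{s_k}$. Hence $u$ need not lie in $W^{0,1}_{2,D^{s_k}}$ on the slab, the embedding $\cH^1_{2,D^{s_k},\mathrm{loc}}\hookrightarrow C_t L^2_{x,\mathrm{loc}}$ you invoke does not apply to $u$, and the Steklov average $(u)_h(t,\cdot)$---which vanishes only on $\bigcap_{s}\cD(s)$---is still not an admissible test function. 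The weak$^*$ compactness in the proof of Proposition~\ref{200810@prop1} was used to pass to the limit in a sequence of \emph{approximate solutions} $u_\varepsilon$, each \emph{constructed} to vanish on the larger $D^{s_k}$ and hence legitimately admitting the energy identity on its slab; the given local solution $u$ is not one of these, and you have not produced a sequence along which to send $\varepsilon\to 0$.

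The paper avoids the energy identity for $u$ altogether by a genuinely different route. It applies the already-established \emph{global} estimate \eqref{est-0302-0000} from Proposition~\ref{200810@prop1} to the compactly supported function $\eta_k u$, which satisfies the globally posed problem \eqref{200831@eq1} whose right-hand side contains the cross term $a^{ij}D_j u\,D_i\eta_k$. After division by $\sqrt{\lambda_k}$, that term contributes $\tfrac{2^k}{R\sqrt{\lambda_k}}\|D(\eta_{k+1}u)\|_{L_2}$ on the right; it is absorbed by summing over $k$ with geometric weights $\varepsilon^k$ and choosing $\lambda_k\sim (2^k/R)^2$. This reduces the local Caccioppoli inequality to the global solvability result, where the time-dependence of $\Gamma$ has already been dealt with once and for all via the approximation-of-solutions argument. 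The paper also remarks that this scheme transfers verbatim to $L_p$ once the corresponding global estimate is available---a feature the direct-testing route would not share.
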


\begin{proof}
We give a proof which also works when the $L_2$ norms are replaced by the $L_p$ norms provided that the corresponding global estimate is available.
By translation we may assume that $X_0=(0,0)$.
Let
$$
R_k=R(1-2^{-k}), \quad k\in \{1,2,\ldots\}
$$
and $\eta_k$ be an infinitely differentiable function on $\bR^{d+1}$ such that
$$
\begin{gathered}
0\le \eta_k \le 1, \quad \eta_k \equiv1 \, \text{ on }\, \bQ_{R_{k}}, \quad \operatorname{supp}\eta_k \subset \bQ_{R_{k+1}},\\
|(\eta_k)_t|+|D\eta_k|^2\lesssim R^{-2}2^{2k}.
\end{gathered}
$$
Then  $\eta_k u\in \cH^1_{2,\cD^0}(\cQ^0)$ satisfies
\begin{equation}		\label{200831@eq1}
\begin{cases}
\cP (\eta_k u)-\lambda_k (\eta_k u) = D_i g^k_{i}+g^k  & \text{in }\, \cQ^0,\\
\cB (\eta_k u) = g^k_i n_i& \text{on }\, \cN^0,\\
\eta_k u = 0 & \text{on }\, \cD^0,
\end{cases}
\end{equation}
where $\lambda_k\ge \lambda_0$, $\lambda_0=\lambda_0(d,\Lambda)\ge 0$ is from Proposition \ref{200810@prop1},  and
$$
\begin{gathered}		
g^{k}_{i}=\eta_k g_i +\sum_{j=1}^da^{ij}D_j \eta_k u,\\
g^k=-\sum_{i=1}^dD_i \eta_k g_i -(\eta_k)_t u+\sum_{i,j=1}^da^{ij}D_j uD_i \eta_k-\lambda_k(\eta_k u).
\end{gathered}
$$
By \eqref{est-0302-0000} applied to \eqref{200831@eq1}, we have
$$
\begin{aligned}
&\|D(\eta_k u)\|_{L_2(\cQ^0)}+\sqrt{\lambda_k} \|\eta_k u\|_{L_2(\cQ^0)}\lesssim \Bigg(\frac{2^k}{R} +\frac{2^{2k}}{R^2\sqrt{\lambda_k}}+\sqrt{\lambda_k}\Bigg)\|u\|_{L_2(\cQ_{R})}\\
&\quad +\Bigg(1+\frac{2^k}{R\sqrt{\lambda_k}}\Bigg)\|g\|_{L_2(\cQ_{R})}
+\frac{2^k}{R\sqrt{\lambda_k}} \|D(\eta_{k+1}u)\|_{L_2(\cQ_0)}.
\end{aligned}
$$
Set
$$
\cU_k=\|D(\eta_k u)\|_{L_2(\cQ^0)}, \quad \cU=\|u\|_{L_2(\cQ_R)},\quad \cF=\|g\|_{L_2(\cQ_{R})}.
$$
By multiplying both sides of the above inequality by $\varepsilon^k$ and summing the terms respect to $k$, we obtain
$$
\begin{aligned}
\sum_{k=1}^\infty \varepsilon^k \cU_k
&\le C\cU\sum_{k=1}^\infty \varepsilon^k \Bigg(\frac{2^k}{R} +\frac{2^{2k}}{R^2\sqrt{\lambda_k}}
+\sqrt{\lambda_k}\Bigg)\\
&\quad +C\cF \sum_{k=1}^\infty \varepsilon^k \Bigg(1+\frac{2^k}{R\sqrt{\lambda_k}}\Bigg)
+C_0 \sum_{k=1}^\infty \frac{ \varepsilon^k 2^k}{R\sqrt{\lambda_k}} \cU_{k+1},
\end{aligned}
$$
where we may assume $C_0\ge (\lambda_0+1)^{1/2}$, and each summation is finite upon choosing
$$
\varepsilon=1/4, \quad \lambda_k=\Bigg(\frac{C_02^{k+2}}{R}\Bigg)^2.
$$
Indeed we have
$$
\sum_{k=1}^\infty \varepsilon^k \Bigg(\frac{2^k}{R} +\frac{2^{2k}}{R^2\sqrt{\lambda_k}}
+\sqrt{\lambda_k}\Bigg)\le \frac{C}{R},
$$
$$
\sum_{k=1}^\infty \varepsilon^k \Bigg(1+\frac{2^k}{R\sqrt{\lambda_k}}\Bigg)\le C,
$$
$$
C_0\sum_{k=1}^\infty \frac{\varepsilon^k2^k}{R\sqrt{\lambda_k}} \cU_{k+1}=\sum_{k=1}^\infty \varepsilon^{k+1}\cU_{k+1}=\sum_{k=2}^\infty \varepsilon^k \cU_k.
$$
Therefore,
$$
\sum_{k=1}^\infty   \varepsilon^k \cU_k\le \frac{C}{R}\cU+C \cF+\sum_{k=2}^\infty \varepsilon^k \cU_k,
$$
which implies the desired estimate.
\end{proof}

\begin{lemma}		\label{200622@lem1}
Let $\alpha\in (0,1)$, $q_1\in \big(\frac{2(d+2)}{d+4}, 2\big)$, and $\gamma\in \big(0,\frac{1}{48}\big]$.
If  Assumptions \ref{A11} (a) and \ref{ass-0301-2356} are satisfied,
and if for any $X\in \Gamma$ and $\rho\in (0, R_0/4]$, there exists $Y\in \partial \cQ$ such that
\begin{equation}		\label{201224@eq1}
Q_{\alpha \rho}(Y)\cap \partial \cQ\subset \cD, \quad Q_{\alpha \rho}(Y)\subset Q_\rho(X),
\end{equation}
then the following assertion holds.
Let  $u\in \cH^1_{2,\cD^T,{\rm{loc}}}(\cQ^T)$ satisfy \eqref{200522@eq1} with $g=(g_1,\ldots, g_d)\in L_{2,\rm{loc}}(\cQ^T)^d$.
Then for any $X_0\in \overline{\cQ^T}$ and $R\in (0, R_0]$ satisfying either
$$
Q_{R/2}(X_0)\subset \cQ \quad \text{or}\quad X_0\in \partial \cQ,
$$
we have
$$
(|Du|^2)^{1/2}_{\cQ_{R/16}(X_0)}\lesssim_{d,\Lambda, \alpha, q_1} (|Du|^{q_1})^{1/q_1}_{\cQ_{R}(X_0)}+(|g|^2)^{1/2}_{\cQ_R(X_0)}.
$$
\end{lemma}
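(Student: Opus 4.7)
The proof is the standard reverse H\"older argument: combine the Caccioppoli-type estimate of Lemma \ref{200515@lem9} with the parabolic Sobolev--Poincar\'e inequality of Lemma \ref{200820@lem2}. The key arithmetic is that with $p=q=q_1$ and $p_0=q_0=2$ in Lemma \ref{200820@lem2}, the scaling condition $d/p+2/q<1+d/p_0+2/q_0$ reads $(d+2)/q_1<(d+4)/2$, which is precisely the hypothesis $q_1>2(d+2)/(d+4)$.

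I split into two cases. \emph{Case A:} either $Q_{R/2}(X_0)\subset\cQ$, or $X_0\in\partial\cQ$ with $\cD\cap Q_{R/8}(X_0)=\emptyset$. In this situation the Dirichlet condition is irrelevant inside $Q_{R/8}(X_0)$, so for any constant $c$ the test function $\eta^2(u-c)$ is admissible (since $\cP$ has no lower-order terms and $\cB$ annihilates constants); choose $c=(u)_{\cQ_{R/8}(X_0)}$ and set $v=u-c$. \emph{Case B:} otherwise $\cD$ meets $Q_{R/8}(X_0)$, and one of the following occurs: (i) $X_0\in\Gamma$, in which case \eqref{201224@eq1} applied to $X_0$ with $\rho=R/8$ gives the desired Dirichlet ball; (ii) $\Gamma\cap Q_{R/8}(X_0)\ne\emptyset$ but $X_0\notin\Gamma$, in which case \eqref{201224@eq1} is applied to a point of $\Gamma\cap Q_{R/8}(X_0)$ at a suitably small scale; (iii) $\partial\cQ\cap Q_{R/8}(X_0)\subset\cD$, in which case any surface ball works directly. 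In all subcases of Case B we obtain $Y\in\partial\cQ$ and some $\alpha'=\alpha'(\alpha,d)\in(0,1)$ with $Q_{\alpha'R/8}(Y)\cap\partial\cQ\subset\cD$ and $Q_{\alpha'R/8}(Y)\subset Q_{R/8}(X_0)$, and we set $v=u$.

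With $v$ chosen, the local Caccioppoli inequality, obtained by testing the equation with $\eta^2v$ as in the proof of Lemma \ref{200515@lem9}, gives
\[
\|Du\|_{L_2(\cQ_{R/16}(X_0))}\lesssim R^{-1}\|v\|_{L_2(\cQ_{R/8}(X_0))}+\|g\|_{L_2(\cQ_{R/8}(X_0))},
\]
while Lemma \ref{200820@lem2}---used as \eqref{eqn-200919-1145} in Case A (since $v$ has zero mean on $\cQ_{R/8}$) and as \eqref{eqn-200920-0425} in Case B (exploiting the Dirichlet surface ball), with $p=q=q_1$, $p_0=q_0=2$, and noting that $v_t=D_i(a^{ij}D_ju-g_i)$ with $(a^{ij}D_ju-g_i)n_i=0$ on $\cN$---yields
\[
(|v|^2)^{1/2}_{\cQ_{R/8}(X_0)}\lesssim R\,\Bigl((|Du|^{q_1})^{1/q_1}_{\cQ_{R/4}(X_0)}+(|g|^{q_1})^{1/q_1}_{\cQ_{R/4}(X_0)}\Bigr).
\]
Substituting, converting $L_2$ and $L_{q_1}$ norms to averages, enlarging $\cQ_{R/4}$ to $\cQ_R$, and applying H\"older's inequality to dominate $(|g|^{q_1})^{1/q_1}$ by $(|g|^2)^{1/2}$ yields the claimed estimate. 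The main delicate point is the Case B bookkeeping: selecting the right point of $\Gamma$ and scale in \eqref{201224@eq1} so that the Dirichlet ball sits inside $Q_{R/8}(X_0)$, after which the dependence on $\alpha$ is absorbed into the implicit constant.
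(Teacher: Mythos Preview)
Your approach is the same as the paper's: combine the localized Caccioppoli estimate (Lemma \ref{200515@lem9}) with the parabolic Sobolev--Poincar\'e inequality (Lemma \ref{200820@lem2}), splitting into the interior/pure-conormal case and the case where a Dirichlet surface ball is available. Two points need tightening.

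\emph{First}, the phrase ``obtained by testing the equation with $\eta^2 v$ as in the proof of Lemma \ref{200515@lem9}'' misdescribes that proof. Because the separation $\Gamma$ is time-dependent, the usual Steklov-average justification for testing with $\eta^2 u$ is not available; this is precisely why Lemma \ref{200515@lem9} is proved by applying the global $L_2$ estimate (Proposition \ref{200810@prop1}) to $\eta_k u$ and iterating. You should simply invoke Lemma \ref{200515@lem9} as a black box (applied to $u-c$ in Case A, which is legitimate since the cut-offs are supported away from $\cD$, and to $u$ in Case B) rather than re-derive it by direct testing.

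\emph{Second}, in Case B(ii) your scale bookkeeping does not close. If $X\in\Gamma\cap Q_{R/8}(X_0)$ lies near the edge of $Q_{R/8}(X_0)$, applying \eqref{201224@eq1} ``at a suitably small scale'' $\rho$ so that $Q_\rho(X)\subset Q_{R/8}(X_0)$ forces $\rho$ (and hence the size $\alpha\rho$ of the Dirichlet ball) to be arbitrarily small; the resulting $\alpha'$ cannot be bounded below by a constant depending only on $(\alpha,d)$. The paper's remedy, which you should adopt, is to apply \eqref{201224@eq1} at the fixed scale $\rho=R/4$ (any $X\in\Gamma\cap Q_{R/4}(X_0)$ will do), obtaining a Dirichlet ball of size $\alpha R/4$ inside $Q_{R/2}(X_0)$, and then apply \eqref{eqn-200920-0425} at scale $R/2$ rather than $R/8$. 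After this adjustment, the Caccioppoli step and the passage from $\cQ_{R/4}$ to $\cQ_{R/16}$ on the left (absorbing a volume ratio into the constant) go through exactly as you outline.
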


\begin{proof}
By translation, we may assume that $X_0=(0,0)$.
In the case when $Q_{R/2}\subset \cQ$, by Lemma \ref{200515@lem9} applied to $u-(u)_{Q_{R/4}}$, we have
$$
(|Du|^2)^{1/2}_{Q_{R/8}}\lesssim R^{-1}(|u-(u)_{Q_{R/4}}|^2)^{1/2}_{Q_{R/4}}+(|g|^2)^{1/2}_{Q_{R/4}},
$$
from which together with \eqref{eqn-200919-1145}, we get the desired estimate.
In the case when $X_0\in \partial \cQ$, we consider the following two cases:
$$
Q_{R/4}\cap \Gamma\neq \emptyset, \quad Q_{R/4}\cap \Gamma=\emptyset.
$$

\begin{enumerate}[(i)]
\item
$Q_{R/4}\cap \Gamma\neq \emptyset$.
Due to Lemma \ref{200515@lem9}, it suffices to show that
\begin{equation}		\label{200807@eq6}
R^{-1}(|u|^2)^{1/2}_{\cQ_{R/2}}\lesssim (|Du|^{q_1})^{1/q_1}_{\cQ_{R}}+(|g|^2)^{1/2}_{\cQ_{R}}.
\end{equation}
We aim to apply Lemma \ref{200820@lem2}. For this, we first fix some $X \in Q_{R/4}\cap\Gamma$. By the assumption, we can find some $Y_0\in\p\Omega$ satisfying \eqref{201224@eq1} with $\rho=R/4$. Noting $u$ vanishes on $\cQ_{\alpha R/4}(Y_0)\subset \cQ_{R/4}(X)\subset \cQ_{R/2}$, by \eqref{eqn-200920-0425} with $R/2$ in place of $R$, we conclude \eqref{200807@eq6}.
\item
$Q_{R/4}\cap \Gamma=\emptyset$.
In this case, we have either
$$
\big(Q_{R/4}\cap \partial \cQ\big)\subset \cD\quad \text{or}\quad \big(Q_{R/4}\cap \partial \cQ\big)\subset \cN,
$$
where the proof of the first case is the same as in $(i)$ and the second case is the same as the interior case.
\end{enumerate}
The lemma is proved.
\end{proof}

From Lemma \ref{200622@lem1} along with Gehring's lemma and Agmon's idea, we conclude the following reverse H\"older's inequality. For a given constant $\lambda>0$ and functions $u$, $f$, and  $g=(g_1,\ldots, g_d)$, we write
$$
U=|Du|+\sqrt{\lambda} |u|, \quad F=
|g|+\frac{|f|}{\sqrt{\lambda}}.
$$
We also denote for a function $v$ defined on a domain $Q$ that
$$
\overline{v}=v \bI_Q.
$$

\begin{lemma}		\label{200710@lem5}
Let $\alpha\in (0,1)$,  $p>2$,  and $\gamma\in \big(0, \frac{1}{48}\big]$.
If Assumptions \ref{A11} (a) and \ref{ass-0301-2356} are satisfied,
and if for any $X\in \Gamma$ and $\rho\in (0, R_0/4]$, there exists $Y\in \partial \cQ$ such that \eqref{201224@eq1} holds,
then we have the following.
Let $u\in \cH^1_{2,\cD^T,\rm{loc}}(\cQ^T)$ satisfy
\begin{equation}		\label{200722@A1}
\begin{cases}
\cP u - \lambda u= D_ig_i +f & \text{in }\, \cQ^T,\\
\cB u = g_in_i& \text{on }\, \cN^T,\\
u = 0 & \text{on }\, \cD^T,
\end{cases}
\end{equation}
where  $\lambda>0$, $g=(g_1,\ldots, g_d)\in L_{p,{\rm{loc}}}(\cQ^T)^d$, and $f\in L_{p,{\rm{loc}}}(\cQ^T)$.
There exists a constant  $p_0\in (2, p)$ depending only on $d$, $\Lambda$, $\alpha$, and $p$, such that for any $X_0\in \bR^{d+1}$ and $R\in (0, R_0]$,  we have
$$
\big(\overline{U}^{p_0}\big)^{1/p_0}_{Q_{R/2}(X_0)}\lesssim_{d,\Lambda,\alpha,p} \big(\overline{U}^2\big)^{1/2}_{Q_R(X_0)}+\big(\overline{F}^{p_0}\big)^{1/p_0}_{Q_R(X_0)}.
$$
The same result holds with $|Du|$ and $|g|$ in place of $U$ and $F$, respectively, provided that $\lambda=0$ and $f\equiv 0$.
\end{lemma}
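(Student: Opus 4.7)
The plan is to combine Agmon's dimension-doubling trick with the parabolic Gehring lemma, bootstrapping from the weak reverse H\"older inequality of Lemma \ref{200622@lem1}.

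First I would lift $u$ to one extra spatial variable $y\in\bR$ in order to fold the $\lambda u$ and $f$ terms into pure divergence form. Set $\tilde u(t,x,y) := u(t,x)\,e^{i\sqrt{\lambda}\,y}$ on $\tilde\cQ^T := \cQ^T\times\bR$. Since $D_y^2\tilde u = -\lambda\tilde u$, the function $\tilde u$ (equivalently, its real and imaginary parts separately) satisfies the $(d+1)$-dimensional divergence-form equation
$$
-\tilde u_t + D_i(a^{ij}D_j\tilde u) + D_y^2\tilde u = D_i\tilde g_i + D_y\tilde g_{d+1},
$$
with $\tilde g_i = g_i e^{i\sqrt{\lambda} y}$ for $i\le d$ and $\tilde g_{d+1} = -i f e^{i\sqrt{\lambda} y}/\sqrt\lambda$. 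The boundary conditions lift cleanly to $\tilde u = 0$ on $\tilde\cD^T := \cD^T\times\bR$ and $(a^{ij}D_j\tilde u)n_i = \tilde g_i n_i$ on $\tilde\cN^T := \cN^T\times\bR$, the $(d+1)$-st component of the outward normal to $\partial\tilde\Omega = \partial\Omega\times\bR$ being zero. The crucial pointwise identities are $|D_{x,y}\tilde u|^2 = |Du|^2 + \lambda|u|^2 = U^2$ and $|\tilde g|^2 = |g|^2 + |f|^2/\lambda = F^2$, both independent of $y$.

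Next I would verify that the lifted configuration satisfies the hypotheses of Lemma \ref{200622@lem1} in dimension $d+1$. The product $\tilde\Omega = \Omega\times\bR$ inherits Assumption \ref{A11}(a) with the same $\gamma$, and condition \eqref{201224@eq1} transfers to $\tilde\Gamma := \Gamma\times\bR$ with a reduced constant $\alpha' = \alpha/\sqrt{2}$: given $\tilde X = (X,y_0)\in\tilde\Gamma$, the point $Y$ provided by the original hypothesis yields $\tilde Y := (Y,y_0)\in\partial\tilde\cQ$, and a $(d+1)$-dimensional parabolic cylinder of radius $\alpha\rho/\sqrt{2}$ centered at $\tilde Y$ fits inside the product $\cQ_{\alpha\rho}(Y)\times(y_0-\alpha\rho/\sqrt{2},y_0+\alpha\rho/\sqrt{2})$, hence inside $\cQ_\rho(\tilde X)$, and meets $\partial\tilde\cQ$ only inside $\tilde\cD$. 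Applying Lemma \ref{200622@lem1} in dimension $d+1$ (with some $\tilde q_1\in(2(d+3)/(d+5),2)$) to the real and imaginary parts of $\tilde u$ on the cylinders $\cQ_R(X_0)\times(-R,R)$, the $y$-independence of the integrands collapses the $(d+1)$-dimensional averages to $d$-dimensional ones, giving
$$
(U^2)^{1/2}_{\cQ_{R/16}(X_0)} \lesssim_{d,\Lambda,\alpha} (U^{\tilde q_1})^{1/\tilde q_1}_{\cQ_R(X_0)} + (F^2)^{1/2}_{\cQ_R(X_0)}
$$
for $X_0\in\overline{\cQ^T}$ with admissible geometry.

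Finally I would self-improve this weak reverse H\"older inequality by a parabolic Gehring argument. Extending $U$ and $F$ by zero outside $\cQ^T$ (the bar notation) and reducing centers $X_0\in\bR^{d+1}\setminus\overline{\cQ^T}$ to nearby centers in $\overline{\cQ^T}$ via translation of cylinders, the estimate becomes a weak reverse H\"older inequality on all parabolic cylinders in $\bR^{d+1}$. The standard parabolic Gehring lemma then produces $\epsilon = \epsilon(d,\Lambda,\alpha)>0$ and the improvement
$$
(\overline U^{p'})^{1/p'}_{Q_{R/2}(X_0)} \lesssim (\overline U^2)^{1/2}_{Q_R(X_0)} + (\overline F^{p'})^{1/p'}_{Q_R(X_0)}
$$
for all $p'\in(2,2+\epsilon)$. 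Taking $p_0 := \min(p, 2+\epsilon/2)\in(2,p)$ yields the claimed estimate, with $\overline F\in L_{p_0}$ guaranteed by $g,f\in L_{p,\mathrm{loc}}$; the case $\lambda=0$, $f\equiv 0$ reduces directly to Lemma \ref{200622@lem1} plus Gehring without the Agmon lift. The main technical nuisance is the geometric book-keeping in $d+1$ dimensions and the upgrading of the weak reverse H\"older bound from centers in $\overline{\cQ^T}$ to arbitrary cubes in $\bR^{d+1}$; the Agmon doubling and Gehring self-improvement themselves are otherwise routine.
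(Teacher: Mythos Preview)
Your proof is correct and follows essentially the same strategy as the paper: Agmon's lift to one extra spatial dimension to absorb $\lambda u$ and $f$ into divergence form, Lemma~\ref{200622@lem1} for a weak reverse H\"older inequality, and Gehring's lemma for self-improvement. The paper orders the steps slightly differently (first Gehring in dimension $d$ for the $\lambda=0$ case, then Agmon and an application of that result in dimension $d+1$) and uses the real multiplier $\cos(\sqrt{\lambda}\tau+\pi/4)$ rather than your complex exponential; your choice yields the exact pointwise identity $|D_{x,y}\tilde u|^2=U^2$ independent of $y$, which spares you the averaging-in-$\tau$ step the paper carries out at the end.
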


\begin{proof}
We first prove the lemma for $\lambda=0$ and $f\equiv 0$.
More precisely, we show that  there exists $p_1\in (2,p]$, depending only on $d$, $\Lambda$, $\alpha$, and $p$, such that for any $p_0\in (2,p_1]$, $X_0\in \bR^{d+1}$, and $R\in (0, R_0]$, we have
\begin{equation}		\label{200717@B1}
\big(|\overline{Du}|^{p_0}\big)_{Q_{R/2}(X_0)}^{1/p_0}\lesssim_{d,\Lambda, \alpha, p} \big(|\overline{Du}|^{2}\big)_{Q_{R}(X_0)}^{1/2}+\big(|\overline{g}|^{p_0}\big)_{Q_{R}(X_0)}^{1/p_0}.
\end{equation}
Fix a number $q_1\in \big(\frac{2(d+2)}{d+4},2\big)$.
Then by Lemma \ref{200622@lem1} with a covering argument, we have that for $X_0\in \bR^{d+1}$ and $R\in (0, R_0]$,
$$
\dashint_{Q_{R/2}(X_0)} \Phi^{2/q_1}\,dX\lesssim_{d,\Lambda, \alpha} \Bigg(\dashint_{Q_R(X_0)}\Phi\,dX\Bigg)^{2/q_1}+\dashint_{Q_R(X_0)}\Psi^{2/q_1}\,dX,
$$
 where $\Phi=|\overline{Du}|^{q_1}$ and  $\Psi=|\overline{g}|^{q_1}$.
Thus by Gehring's lemma (see, for instance, \cite[Ch.V]{MR0717034}), we get the desired result \eqref{200717@B1}.

For the case when $\lambda>0$, we use an idea by S. Agmon.
Denote by $(t,z)=(t,x,\tau)$ a point in $\bR^{d+2}$, where $z=(x,\tau)\in \bR^{d+1}$.
We define
$$
\hat{u}(t,z)=u(t,x)\eta(\tau), \quad \eta(\tau)=\cos(\sqrt{\lambda}\tau+\pi/4),
$$
$$
\hat{\cQ}=\{(t,z): t\in \bR,\, z\in \Omega\times \bR\}, \quad \hat{\cQ}^T=\{(t,z)\in \hat{\cQ}: t<T\},
$$
$$
\hat{\cD}=\{(t,z):t\in \bR, \, z\in \cD\times \bR\}, \quad \hat{\cD}^T=\{(t,z)\in \hat{\cD}: t<T\},
$$
$$
\hat{\cN}=\{(t,z):t\in \bR, \, z\in \cN\times \bR\}, \quad \hat{\cN}^T=\{(t,z)\in \hat{\cN}: t<T\},
$$
$$
\hat{Q}_r(t_0,z_0)=(t_0-r^2,t_0)\times \{z\in \bR^{d+1}: |z-z_0|<r\}.
$$
Since $u$ satisfies \eqref{200722@A1}, we see  that  $\hat{u}\in \cH^1_{2,\hat{\cD}^T, \rm{loc}}(\hat{\cQ}^T)$ satisfies
\begin{equation}		\label{200722@eq1}
\begin{cases}
\cP \hat{u}+D_{\tau\tau}\hat{u} = {\displaystyle \sum_{i=1}^dD_i (g_i \eta)} + f\eta & \text{in }\, \hat{\cQ}^T,\\
\cB \hat{u} = {\displaystyle \sum_{i=1}^d g_i\eta n_i}& \text{on }\, \hat{\cN}^T,\\
\hat{u} = 0 & \text{on }\, \hat{\cD}^T.
\end{cases}
\end{equation}
Since $n_\tau=0$, the operator $\cB$ is also the conormal derivative operator associated with $\cP+D_{\tau\tau}$.
Note that the coefficients of $\cP+D_{\tau\tau}$ satisfy the ellipticity condition with the same constant $\Lambda$, and that $\hat{\cQ}$, $\hat{\cD}$, and $\hat{\cN}$ satisfy the same conditions of the lemma.
Moreover, we can rewrite the source term in the divergence form since
$$
f\eta=D_\tau(f \hat{\eta}), \quad \text{where }\, \hat{\eta}(\tau)=\frac{\sin(\sqrt{\lambda}\tau+\pi/4)}{\sqrt{\lambda}}.
$$
Hence, we can apply the above result for $\lambda=0$ to \eqref{200722@eq1} to see that there exists $p_2\in (2,p]$ determined by $d+1$, $\Lambda$, $\alpha$, and $p$, such that
for any $p_0\in (2,p_2]$, $(t_0,z_0)\in \bR^{d+2}$, and $R\in (0, R_0]$, 	
\begin{align}
\nonumber
&\big(|\overline{D_{z}\hat{u}}|^{p_0}\big)_{\hat{Q}_{R/2}(t_0,z_0)}^{1/p_0}\\
\nonumber
&\lesssim \big(|\overline{D_{z}\hat{u}}|^{2}\big)_{\hat{Q}_{R}(t_0,z_0)}^{1/2}+\big(|\overline{g_i\eta}|^{p_0}\big)^{1/p_0}_{\hat{Q}_R(t_0,z_0)}+\big(|\overline{f\hat{\eta}}|^{p_0}\big)^{1/p_0}_{\hat{Q}_R(t_0,z_0)}\\
\label{200724@B1}
&\lesssim_{d,\Lambda, M, p}  \big(|\overline{D_{z}\hat{u}}|^{2}\big)_{\hat{Q}_{R}(t_0,z_0)}^{1/2}+\big(\overline{F}^{p_0}\big)^{1/p_0}_{Q_R(t_0,z_0)}.
\end{align}
Now we fix $p_0$ such that $2<p_0\le \min\{p_1,p_2\}$, where $p_1$ is the constant from the case $\lambda=0$.
Then by \eqref{200724@B1} with $z_0=(x_0,0)\in \bR^{d+1}$ and the fact that
$$
\dashint^{-R/4}_{R/4} |\eta|^{p_0}\,d\tau\gtrsim_{p_0}1, \quad |\overline{Du}(t,x)\eta(\tau)|\le |\overline{D_z\hat{u}}(t,z)|\le \overline{U}(t,x),
$$
we have
$$
\big(|\overline{Du}|^{p_0}\big)^{1/p_0}_{Q_{R/4}(t_0,x_0)}
\lesssim \big(|\overline{D_{z}\hat{u}}|^{p_0}\big)_{\hat{Q}_{R/2}(t_0,z_0)}^{1/p_0}\lesssim \big(\overline{U}^2\big)^{1/2}_{Q_R(t_0,x_0)}+\big(\overline{F}^{p_0}\big)^{1/p_0}_{Q_R(t_0,x_0)}.
$$
Similarly, using the fact that
$$
\dashint^{-R/4}_{R/4} |\eta'|^{p_0}\,d\tau\gtrsim_{p_0}\lambda^{p_0/2}, \quad |\overline{u}(t,x)\eta'(\tau)|\le |\overline{D_z\hat{u}}(t,z)|\le \overline{U}(t,x),
$$
we get
$$
\big(\big|\sqrt{\lambda} \overline{u}\big|^{p_0}\big)^{1/p_0}_{Q_{R/4}(t_0,x_0)}
\lesssim \big(|\overline{D_{z}\hat{u}}|^{p_0}\big)_{\hat{Q}_{R/2}(t_0,z_0)}^{1/p_0}\lesssim \big(\overline{U}^2\big)^{1/2}_{Q_R(t_0,x_0)}+\big(\overline{F}^{p_0}\big)^{1/p_0}_{Q_R(t_0,x_0)}.
$$
Combining these together and using a covering argument, we get the desired estimate.
The lemma is proved.
\end{proof}

%========================================
\section{Estimates with time-independent separation and constant coefficients}        \label{sec4}
%========================================
In this section, we derive various estimates when the separation is time-independent and the coefficients are all constants. Throughout the section, we consider
$$
\cP_0 u=-u_t+D_i(a^{ij}_0D_j u),
$$
where the coefficients $a^{ij}_0$ are constants.
The corresponding conormal derivative of $u$ is denoted by $\cB_0 u$.
%========================================
\subsection{Estimates near curved boundary}
%========================================
Recall that
$$
U=|Du|+\sqrt{\lambda}|u|,
$$
and set
$$
U_t=|Du_t|+\sqrt{\lambda}|u_t|.
$$
Compared to Lemma \ref{200710@lem5}, in the following proposition we further estimate some time derivatives. Here, $\Gamma$ is assumed to be time-independent, which clearly satisfies Assumption \ref{ass-0301-2356}, and $\cP$ is replaced with the constant-coefficient operator $\cP_0$.
\begin{proposition}		\label{200825@prop1}
Let $\alpha\in (0,1)$ and $\gamma\in\big(0, \frac{1}{48}\big]$.
Suppose that the spatial domain $\Omega$ satisfies Assumption \ref{A11} (a),  the separation $\Gamma$ between $\cD$ and $\cN$ is time-independent, and for any $X\in \Gamma$ and $\rho\in (0, R_0/4]$, there exists $Y\in \partial \cQ$ such that  \eqref{201224@eq1} holds.
Let $(0,0)\in \Gamma$, $R\in (0, R_0]$, and   $u\in W^{0,1}_2(\cQ_R)$ satisfy
\begin{equation}		\label{200825@eq1}
\begin{cases}
\cP_0 u-\lambda u =0  & \text{in }\, \cQ_R,\\
\cB_0 u = 0& \text{on }\, Q_R\cap \cN,\\
u = 0 & \text{on }\, Q_R\cap \cD,
\end{cases}
\end{equation}
where $\lambda\ge 0$.
Then $u_t\in W^{0,1}_2(\cQ_{R/2})$ and
$$
R^{d+2-\frac{d+2}{p_0}}\|U\|_{L_{p_0}(\cQ_{R/2})}+R^{d+4-\frac{d+2}{p_0}}\|U_t\|_{L_{p_0}(\cQ_{R/2})}\lesssim_{d, \Lambda,\alpha}\|U\|_{L_1(\cQ_R)},
$$
where $p_0=p_0(d,\Lambda, \alpha)\in (2,4)$ is from Lemma \ref{200710@lem5} with $g_i=f=0$.
\end{proposition}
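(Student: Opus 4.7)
The plan is to establish the $U$-estimate via a reverse H\"older upgrade, then transfer it to $U_t$ by exploiting that $u_t$ solves the same mixed BVP. First, apply Lemma \ref{200710@lem5} with $g_i\equiv f\equiv 0$ and, say, $p=4$ to obtain an exponent $p_0\in(2,4)$ and the reverse H\"older inequality $(\overline U^{p_0})^{1/p_0}_{Q_{R/2}(X_0)}\lesssim(\overline U^{2})^{1/2}_{Q_R(X_0)}$. To upgrade the right-hand side from $L_2$ to $L_1$, interpolate via H\"older in the form $\|U\|_{L_2}\le\|U\|_{L_1}^{\theta}\|U\|_{L_{p_0}}^{1-\theta}$ with $\theta=(p_0-2)/(2(p_0-1))\in(0,1)$, apply Young's inequality with a small parameter so that the $\|U\|_{L_{p_0}}$ factor carries a fixed small constant, and run the standard Giaquinta-type iteration over shrinking radii. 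The outcome is the scale-invariant bound
\begin{equation*}
\|U\|_{L_{p_0}(\cQ_{r})}\lesssim(r'-r)^{-(d+2)(1-1/p_0)}\|U\|_{L_{1}(\cQ_{r'})},\qquad R/2\le r<r'\le R,
\end{equation*}
which, with $r=R/2$ and $r'=R$, is the first half of the desired estimate.

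Next, I would obtain $u_t$ as a weak solution of the same homogeneous mixed problem. Because $a^{ij}_0$ are constants and $\Gamma$ is time-independent, the BVP \eqref{200825@eq1} is invariant under translations in $t$. Hence the Steklov average $u^h(t,x)=h^{-1}\int_t^{t+h}u(s,x)\,ds$ and its time derivative $(u^h)_t=h^{-1}(u(t+h,x)-u(t,x))$ both satisfy the same equation and boundary conditions on any cylinder contained in $\cQ_{R'}\cap\{t+h<0\}$ for $R'<R$. Combining Lemma \ref{200515@lem9} with an energy identity obtained by testing $\cP_0 u-\lambda u=0$ against $\eta^2(u^h)_t$ and absorbing cross terms via Young yields uniform-in-$h$ $L_2$ bounds on $(u^h)_t$ and its spatial gradient on compact subcylinders. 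Weak compactness then produces a limit $u_t\in W^{0,1}_{2}(\cQ_{R'})$ for every $R'<R$, which solves the same BVP.

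To close the argument, testing $\cP_0 u-\lambda u=0$ against $\eta^2 u_t$ (with $\eta$ a parabolic cutoff equal to $1$ on $\cQ_r$, supported in $\cQ_R$, vanishing at $t=-R^2$, satisfying $|\eta\eta_t|+|D\eta|^2\lesssim(R-r)^{-2}$) and using the symmetry of $a^{ij}_0$ to rewrite $\int a^{ij}_0 D_j u\,D_i u_t$ as a total $t$-derivative, and then discarding the nonnegative boundary terms at $t=0$, yields $\|u_t\|_{L_{2}(\cQ_r)}\lesssim(R-r)^{-1}\|U\|_{L_{2}(\cQ_R)}$; feeding this into Lemma \ref{200515@lem9} applied to $u_t$ gives $\|Du_t\|_{L_{2}(\cQ_r)}\lesssim(R-r)^{-2}\|U\|_{L_{2}(\cQ_R)}$. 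The $\sqrt{\lambda}|u_t|$ component of $U_t$ is handled by the Agmon lift $\hat u(t,x,\tau)=u(t,x)\cos(\sqrt{\lambda}\tau+\pi/4)$, exactly as in the proof of Lemma \ref{200710@lem5}: it converts the $\lambda>0$ problem into a $\lambda=0$ problem in one additional variable, and descending back recovers $\|U_t\|_{L_{2}(\cQ_r)}\lesssim(R-r)^{-2}\|U\|_{L_{2}(\cQ_R)}$ uniformly in $\lambda\ge 0$. Combining with the consequence $\|U\|_{L_{2}(\cQ_{r'})}\lesssim R^{-(d+2)/2}\|U\|_{L_{1}(\cQ_R)}$ of the first-paragraph estimate (H\"older plus the reverse H\"older upgrade) and one more H\"older application gives $\|U_t\|_{L_{1}(\cQ_{r''})}\lesssim R^{-2}\|U\|_{L_{1}(\cQ_R)}$ for some $r''\in(R/2,R)$. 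Finally, applying the first-paragraph estimate to $u_t$ (legal because it solves the same BVP) yields $\|U_t\|_{L_{p_0}(\cQ_{R/2})}\lesssim R^{-(d+2)(1-1/p_0)-2}\|U\|_{L_{1}(\cQ_R)}$, which is the second half of the claim after multiplying through by $R^{d+4-(d+2)/p_0}$.

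The main obstacle is the second paragraph together with the $(R-r)^{-2}$ scaling of $U_t$: near $\Gamma$ we have no $W^{2}_{2}$-in-$x$ control on $u$, so $u_t$ cannot be read off from the pointwise identity $u_t=D_i(a^{ij}_0 D_j u)-\lambda u$, and must instead be constructed via time-translation invariance --- which is where the hypotheses of constant coefficients and time-independent $\Gamma$ enter decisively. A secondary but important subtlety is the $\lambda$-dependence: the naive energy computation for $\sqrt{\lambda}|u_t|$ produces only $\sqrt{\lambda}(R-r)^{-1}$, which is insufficient, and it is Agmon's trick that restores the uniform parabolic scaling $(R-r)^{-2}$ in $\lambda$.
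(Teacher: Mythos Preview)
Your proof is correct and follows essentially the same strategy as the paper's: a reverse-H\"older-to-$L_1$ iteration for $U$, a Caccioppoli-type energy bound for $u_t$ (relying crucially on the time-independence of $\Gamma$ and constant coefficients so that $u_t$ solves the same homogeneous problem), and Agmon's lift for $\lambda$-uniformity. The only differences are cosmetic: the paper applies Agmon's trick at the very outset to reduce entirely to $\lambda=0$, which streamlines the $U_t$ step by making $U=|Du|$ throughout, and its Lemma \ref{200825@lem2} derives $\|u_t\|_{L_2}\lesssim(R-r)^{-1}\|Du\|_{L_2}$ via a Caccioppoli-plus-iteration argument rather than your symmetry-of-$a_0^{ij}$ shortcut.
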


\begin{remark}
In the  proposition above and throughout the paper, $u\in W^{0,1}_{p}(\cQ_R)$ is said to satisfy \eqref{200825@eq1} if $u$ vanishes on $Q_R\cap \cD$ and
$$
\int_{\cQ_R} u \varphi_t\,dX-\int_{\cQ_R}a^{ij}_0D_ju  D_i\varphi\,dX-\lambda \int_{\cQ_R} u \varphi \,dX=0
$$
for any $\varphi\in W^{1,1}_{p/(p-1)}(\cQ_R)$ that vanishes on $\partial \cQ_R\setminus \cN$.
\end{remark}

The rest of this subsection is devoted to the proof of Proposition \ref{200825@prop1}.

\begin{lemma}		\label{200825@lem2}
Under the same conditions in Proposition \ref{200825@prop1} with $\lambda=0$,
we have that for $r\in (0, R)$,
$u_t\in W^{1,1}_2(\cQ_r)$
and
$$
(R-r)\|u_t\|_{L_{2}(\cQ_{r})}+(R-r)^2\|Du_t\|_{L_{2}(\cQ_{r})}+(R-r)^3\|u_{tt}\|_{L_2(\cQ_r)}\lesssim_{d, \Lambda}\|Du\|_{L_2(\cQ_R)}.
$$
\end{lemma}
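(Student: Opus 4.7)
The plan is to exploit the time-translation invariance of the problem. Since the coefficients $a^{ij}_0$ are constant and both $\Omega$ and $\Gamma$ are $t$-independent, the backward difference quotient $\Delta_h u(t,x):=h^{-1}(u(t,x)-u(t-h,x))$ is, for small $h$, a weak solution of the same homogeneous mixed problem in a cylinder slightly shrunk from the top. Once suitable uniform-in-$h$ $L_2$ bounds are obtained, $u_t$ exists in $L_2$ locally and weakly satisfies $\cP_0 u_t=0$ in $\cQ_R$ with $\cB_0 u_t=0$ on $Q_R\cap\cN$ and $u_t=0$ on $Q_R\cap\cD$; the same reasoning, applied to $u_t$ in place of $u$, then produces $u_{tt}$. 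Fix intermediate radii $r<r_1<r_2<R$ with $r_1-r$, $r_2-r_1$, and $R-r_2$ each comparable to $(R-r)/3$.

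\emph{Step 1.} I would first establish
$$
\|u_t\|_{L_2(\cQ_{r_2})}\lesssim (R-r)^{-1}\|Du\|_{L_2(\cQ_R)}
$$
by a Steklov-averaged energy identity. Let $[u]_h$ be the backward Steklov average in $t$, so that $\partial_t[u]_h=\Delta_h u\in L_2$ and $[u]_h$ classically satisfies $\partial_t[u]_h=D_i(a^{ij}_0 D_j[u]_h)$ while inheriting the mixed boundary conditions ($t$-independence of $\Gamma$ is used here). Choose a cutoff $\xi(t,x)$ with $\xi\equiv 1$ on $\cQ_{r_2}$, $\operatorname{supp}\xi\subset\cQ_{(r_2+R)/2}$, vanishing at $t=t_0-R^2$ and on $\partial B_R$ but equal to $1$ near the top $t=t_0$, and $|\xi_t|+|D\xi|^2\lesssim (R-r)^{-2}$. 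Testing with $\xi^2\partial_t[u]_h$ and using symmetry of $a^{ij}_0$ rewrites the spatial bilinear form as $\tfrac12\int a^{ij}_0\xi^2\partial_t(D_j[u]_h D_i[u]_h)$; integration by parts in $t$ kills the bottom boundary contribution via $\xi$, produces a nonnegative top boundary term by ellipticity (which I discard), and leaves an interior term controlled by $(R-r)^{-2}|D[u]_h|^2$. Cauchy--Schwarz absorption followed by $h\to 0$ (and $\|D[u]_h\|_{L_2}\le\|Du\|_{L_2}$) gives the bound.

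\emph{Steps 2--3.} Since $u_t$ weakly solves the same homogeneous mixed problem, Lemma \ref{200515@lem9} (whose proof via nested cutoffs extends routinely to arbitrary radii $r_1<r_2$) applied to $u_t$ yields
$$
\|Du_t\|_{L_2(\cQ_{r_1})}\lesssim (r_2-r_1)^{-1}\|u_t\|_{L_2(\cQ_{r_2})}\lesssim (R-r)^{-2}\|Du\|_{L_2(\cQ_R)}.
$$
Repeating the Step~1 argument with $u_t$ in place of $u$ on the pair $\cQ_r\subset\cQ_{r_1}$ produces
$$
\|u_{tt}\|_{L_2(\cQ_r)}\lesssim (r_1-r)^{-1}\|Du_t\|_{L_2(\cQ_{r_1})}\lesssim (R-r)^{-3}\|Du\|_{L_2(\cQ_R)}.
$$
Summing the three weighted estimates yields the desired inequality.

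The main obstacle is the rigorous justification of the time differentiation. The two delicate points are (i) verifying that the Steklov average (equivalently, $\Delta_h u$) vanishes on the Dirichlet portion of the lateral boundary---this rests crucially on $\cD$ being $t$-independent, so that shifting $u$ in $t$ preserves its vanishing set---and (ii) ensuring that integrating the spatial bilinear form by parts in $t$ against a cutoff that equals $1$ near $t=t_0$ yields a boundary contribution with the correct (favorable) sign, which is where ellipticity of $a^{ij}_0$ enters. Once these two points are in place, the rest of the argument is standard energy/Caccioppoli manipulation.
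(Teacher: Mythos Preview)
Your proposal is correct and follows the same overall strategy as the paper: exploit the time-independence of the constant coefficients and of $\Gamma$ to differentiate the problem in $t$, test the equation against (essentially) $\xi^2 u_t$, and iterate to reach $u_{tt}$. The paper justifies the $t$-differentiation by mollification rather than Steklov averages, but this is cosmetic.

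The one substantive difference lies in Step~1, in the treatment of the cross term $\int \xi^2 a_0^{ij}D_j u\,D_i u_t$. The paper bounds it by Young's inequality, which produces a $\|Du_t\|_{L_2}$ on the right; it then invokes the Caccioppoli estimate for $u_t$ to convert this back into $\|u_t\|_{L_2}$ on a slightly larger cylinder, arriving at an inequality of the form $\|u_t\|^2_{L_2(\cQ_{\rho_1})}\le C(\rho_2-\rho_1)^{-2}\|Du\|^2_{L_2(\cQ_{\rho_2})}+\varepsilon\|u_t\|^2_{L_2(\cQ_{\rho_2})}$, and closes by the standard iteration over nested radii. You instead use the symmetry of $a_0^{ij}$ to write the cross term as $\tfrac12\int \xi^2\,\partial_t(a_0^{ij}D_j u\,D_i u)$, integrate by parts in $t$, and discard the top boundary contribution by ellipticity; this bypasses the iteration entirely and is the cleaner route. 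One minor caveat: Lemma~\ref{200515@lem9} as stated concerns solutions on the full cylinder $\cQ^T$, while here $u_t$ is only a solution on $\cQ_{r_2}$. What you actually need in Step~2 is the local Caccioppoli inequality $\|Du_t\|_{L_2(\cQ_\rho)}\lesssim(\tau-\rho)^{-1}\|u_t\|_{L_2(\cQ_\tau)}$, which the paper obtains directly by testing the equation for $u_t$ with $\eta^2 u_t$ (and which your own Steklov machinery also yields).
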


\begin{proof}
By a standard mollification technique, it suffices to prove the desired estimate under the assumption that $u$ and $Du$ are smooth with respect to $t$.
For $\rho, \tau$ with $0<\rho<\tau\le R$, let $\eta_{\rho,\tau}$ be an infinitely differentiable  function in $\bR^{d+1}$ such that
$$
0\le \eta_{\rho, \tau}\le 1, \quad \eta_{\rho,\tau}\equiv 1 \,\text{ in }\, \bQ_\rho, \quad \operatorname{supp}\eta_{\rho,\tau}\subset \bQ_\tau,
$$
$$
|D \eta_{\rho, \tau}|^2+|(\eta_{\rho, \tau})_t|\lesssim_d (\tau-\rho)^{-2}.
$$
Since the boundary portions in \eqref{200825@eq1} are time-independent, we can apply  $\eta_{\rho, \tau}^2 u$ as a test function to get the usual Caccioppoli inequality
$$
\|Du\|_{L_2(\cQ_\rho)}\lesssim_{d,\Lambda}  (\tau-\rho)^{-1}\|u\|_{L_2(\cQ_\tau)}.
$$
Since $u_t$ satisfies the same equation and boundary conditions, we also have
\begin{equation}		\label{200825@eq1a}
\|Du_t\|_{L_2(\cQ_\rho)}\lesssim (\tau-\rho)^{-1}\|u_t\|_{L_2(\cQ_\tau)}.
\end{equation}
Let
$$
\rho\le \rho_1<\rho_2\le \tau, \quad \rho_0=\frac{\rho_1+\rho_2}{2}.
$$
We test \eqref{200825@eq1} again with $\eta_{\rho_1,\rho_0}^2u_t$ to get
$$
\begin{aligned}
\int_{\cQ_{\rho_0}} |\eta_{\rho_1, \rho_0} u_t|^2\,dX
&=-\int_{\cQ_{\rho_0}} \eta_{\rho_1,\rho_0}^2a_0^{ij}D_j uD_i u_t\,dX\\
&\quad -2\int_{\cQ_{\rho_0}} \eta_{\rho_1, \rho_0}D_i \eta_{\rho_1, \rho_0} a_0^{ij} D_j u u_t\,dX.
\end{aligned}
$$
By  Young's inequality and \eqref{200825@eq1a} with $\rho_0$ and $\rho_2$ in place of $\rho$ and $\tau$, respectively, we have
$$
\int_{\cQ_{\rho_1}}|u_t|^2\,dX\le \frac{C}{(\rho_2-\rho_1)^2}\int_{\cQ_{\rho_2}}|Du|^2\,dX+\varepsilon \int_{\cQ_{\rho_2}} |u_t|^2\,dX
$$
for $\varepsilon\in (0,1)$, where $C=C(d, \varepsilon)>0$.
Since the above inequality holds for any $\rho_1,\rho_2$ with $\rho \le \rho_1<\rho_2\le \tau$, by a standard iteration argument, we get
\begin{equation}\label{eqn-201214-1038}
	\|u_t\|_{L_2(\cQ_{\rho})}\lesssim (\tau-\rho)^{-1} \|Du\|_{L_2(\cQ_\tau)}.
\end{equation}
From \eqref{200825@eq1a} with $\tau$ replaced by $(\rho+\tau)/2$ and \eqref{eqn-201214-1038} with $\rho$ replaced by $(\rho+\tau)/2$, we obtain
\begin{equation}\label{eqn-201214-1036}
	\|Du_t\|_{L_2(\cQ_\rho)}\lesssim (\tau-\rho)^{-1}\|u_t\|_{L_2(\cQ_{(\rho+\tau)/2})} \lesssim (\tau-\rho)^{-2} \|Du\|_{L_2(\cQ_\tau)}.
\end{equation}
Again, since we can differentiate the problem in $t$, the $u_{tt}$ estimate follows from \eqref{eqn-201214-1038} with $u$ replaced by $u_t$ and \eqref{eqn-201214-1036}, with parameters chosen properly
\begin{equation*}
	\|u_{tt}\|_{L_2(\cQ_{\rho})}\lesssim (\tau-\rho)^{-1} \|Du_t\|_{L_2(\cQ_{(\rho+\tau)/2})} \lesssim (\tau-\rho)^{-3} \|Du\|_{L_2(\cQ_\tau)}
\end{equation*}
The lemma is proved.
\end{proof}

We are ready to prove Proposition \ref{200825@prop1}.

\begin{proof}[Proof of Proposition \ref{200825@prop1}]
Due to Agmon's idea (cf. the proof of Lemma \ref{200710@lem5}), it suffices to prove the proposition for $\lambda=0$.
By a rescaled version of Lemma \ref{200710@lem5} and a covering argument, we see that for $0<\rho<\tau\le R$,
\begin{equation}		\label{200831@A1}
\|Du\|_{L_{p_0}(\cQ_\rho)}\lesssim (\tau-\rho)^{\frac{d+2}{p_0}-\frac{d+2}{2}} \|Du\|_{L_2(\cQ_\tau)}.
\end{equation}
From H\"older's and Young's inequalities, we have
\begin{equation}		\label{200825@eq3}
\|Du\|_{L_{p_0}(\cQ_\rho)}\le \varepsilon \|Du\|_{L_{p_0}(\cQ_\tau)}
+C\varepsilon^{-\frac{p_0}{p_0-2}} (\tau-\rho)^{(d+2)(1/p_0-1)}
\|Du\|_{L_{1}(\cQ_\tau)},
\end{equation}
for any $\varepsilon>0$, where $C=C(d,\Lambda, \alpha, \varepsilon)$.
Set
$$
\tau_k=\frac {3R} 4+\frac R 4 (1-2^{-k}), \quad k\in \{0,1,2,\ldots\}.
$$
Then by \eqref{200825@eq3} with $\tau_{k}$ and $\tau_{k+1}$ in place of $\rho$ and $\tau$, respectively, we get
$$
\|Du\|_{L_{p_0}(\cQ_{\tau_k})}\le \varepsilon \|Du\|_{L_{p_0}(\cQ_{\tau_{k+1}})}+C \varepsilon^{-\frac{p_0}{p_0-2}}
R^{(d+2)(1/p_0-1)}
2^{(d+2)(1-1/p_0)k}\|Du\|_{L_{1}(\cQ_{\tau_{k+1}})}.
$$
Multiplying both sides of the above inequality by $\varepsilon^k$ and summing the terms with respect to $k=0,1,\ldots$, we obtain that
$$
\begin{aligned}
\sum_{k=0}^\infty \varepsilon^k \|Du\|_{L_{p_0}(\cQ_{\tau_k})}
&\le \sum_{k=1}^\infty \varepsilon^k \|Du\|_{L_{p_0}(\cQ_{\tau_k})}\\
&\quad +C\varepsilon^{-\frac{p_0}{p_0-2}}R^{(d+2)(1/p_0-1)}
\sum_{k=0}^\infty \big(\varepsilon2^{(d+2)(1 -1/{p_0})}\big)^k\|Du\|_{L_{1}(\cQ_{\tau_{k+1}})},
\end{aligned}
$$
where each summation is finite upon choosing $\varepsilon=2^{-1-(d+2)(1-1/{p_0})}$.
This yields
\begin{equation}		\label{200825@A1}
\|Du\|_{L_{p_0}(\cQ_{3R/4})}\lesssim R^{\frac{d+2}{p_0}-(d+2)}\|Du\|_{L_1(\cQ_R)}.
\end{equation}
Since $u_t$ satisfies the same equation and the boundary conditions,
by \eqref{200825@A1}, H\"older's inequality, and Lemma \ref{200825@lem2}, we have
$$
\|Du_t\|_{L_{p_0}(\cQ_{R/2})}\lesssim R^{\frac{d+2}{p_0}-\frac{d+2}{2}}\|Du_t\|_{L_2(\cQ_{2R/3})}\lesssim R^{\frac{d+2}{p_0}-\frac{d+2}{2}-2}\|Du\|_{L_2(\cQ_{3R/4})}\lesssim R^{\frac{d+2}{p_0}-d-4}\|Du\|_{L_1(\cQ_{R})}.
$$
The proposition is proved.
\end{proof}

%========================================
\subsection{Estimates near flat boundary}
%========================================

Now, we prove that the problem that we perturb from -- the problem with constant coefficients, cylindrical separation, and flat boundary, can reach the optimal regularity $Du\in L^{x'}_pL^{(t,x'')}_{\infty}$. In this subsection, we additionally assume that $(a_0^{ij})$ is symmetric and set
$$
Q_R^+=Q_R\cap \{(t,x): x_1>0\},
$$
$$
D=B_1\cap \{x:x_1=0, \, x_2>\phi\},
$$
$$
N=B_1\cap  \{x:x_1=0, \, x_2<\phi\},
$$
where
$$
\phi=\phi(x^3,\ldots, x^{m+2}):\bR^{m}\to \bR
$$
 is a Lipschitz function of $m$ variables with Lipschitz constant $M$ and satisfying $\phi(0,\ldots,0)=0$.
Here, $m\in \{0,1,\ldots, d-2\}$ and if $m=0$, then $\phi$ is understood as the constant function $\phi\equiv 0$.
We write $X=(t,x)=(t,x',x'')\in  \bR^{d+1}$, where
$$
x'=(x^1,\ldots, x^{m+2})\in \bR^{m+2}, \quad x''=(x^{m+3},\ldots, x^d)\in \bR^{d-2-m}.
$$

\begin{proposition}\label{200826@prop1}
If $u\in W^{0,1}_2(Q_1^+)$ satisfy
$$
\begin{cases}
\cP_0 u - \lambda u=0 &\quad \text{in }\, Q_1^+,\\
\cB_0 u=0 &\quad \text{on }\, (-1,0)\times N\\
u=0 &\quad \text{on }\, (-1,0)\times D,
\end{cases}
$$
where $\lambda\ge0$, then for $p\in \big[2, \frac{2(m+2)}{m+1}\big)$ we have that
$$
U\in L^{x'}_pL^{(t,x'')}_\infty(Q_{1/2}^+)
$$
and
$$
\|U\|_{L^{x'}_pL^{(t,x'')}_{\infty}(Q_{1/2}^+)}\lesssim_{d,\Lambda,M,p} \|U\|_{L_1(Q_1^+)}.
$$
\end{proposition}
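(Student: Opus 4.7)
The plan is to first reduce to $\lambda=0$ by Agmon's trick exactly as in the proof of Lemma \ref{200710@lem5}: introducing a dummy variable $\tau$ and setting $\hat{u}(t,x,\tau)=u(t,x)\cos(\sqrt{\lambda}\tau+\pi/4)$ converts the $(\cP_0-\lambda)$ equation into a pure $(\cP_0+D_{\tau\tau})$ equation on a half-cylinder of one higher space dimension, with the same mixed boundary data on $\{x^1=0\}$ and the same conormal operator (since $n_\tau=0$). Bounds on $U=|Du|+\sqrt{\lambda}|u|$ follow from bounds on $|D_{(x,\tau)}\hat u|$ upon integrating in $\tau$, exactly as in the end of the proof of Lemma \ref{200710@lem5}. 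So from now on we may assume $\lambda=0$.

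The key observation is that the problem is translation-invariant in $t$ and in $x''=(x^{m+3},\ldots,x^d)$: the coefficients $a_0^{ij}$ are constants, the flat face $\{x^1=0\}$ is $(t,x'')$-independent, and the separating graph $\phi(x^3,\ldots,x^{m+2})$ does not involve $x''$ or $t$. Consequently, for any multi-index $(k,\alpha)$ in the $(t,x'')$ variables, the difference quotients (and in the limit, derivatives) $\partial_t^{k}D_{x''}^{\alpha}u$ satisfy the very same homogeneous mixed problem on each slightly smaller sub-cylinder. Combining the Caccioppoli-type bootstrap already used in Lemma \ref{200825@lem2} (iterated in $x''$ as well as $t$) with the reverse H\"older inequality of Lemma \ref{200710@lem5} and a standard cutoff/iteration argument as in Proposition \ref{200825@prop1}, I would prove that for every $k,|\alpha|\in\mathbb{N}_0$ and every $q\in[2,\infty)$,
$$
\bignorm{\partial_t^{k}D_{x''}^{\alpha} u}_{L_q(Q_{7/8}^+)}\lesssim_{d,\Lambda,M,q,k,|\alpha|}\|Du\|_{L_1(Q_1^+)}.
$$

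Next, for a.e.\ fixed $(t,x'')$, view $u(t,\cdot,x'')$ as a function of $x'$ on $B_{3/4}^{x'}\cap\{x^1>0\}$. Splitting indices $i=(i',i'')$ and using symmetry of $(a_0^{ij})$, the equation $\cP_0 u=0$ can be rewritten as
$$
-D_{i'}\bigl(a_0^{i'j'}D_{j'}u\bigr)= u_t+2D_{i'}\bigl(a_0^{i'j''}D_{j''}u\bigr)+D_{i''}\bigl(a_0^{i''j''}D_{j''}u\bigr),
$$
which is a constant-coefficient elliptic equation in $x'$ on the half-ball with Dirichlet/conormal conditions separated by the Lipschitz graph $\{x^2=\phi(x^3,\ldots,x^{m+2})\}$ and with a forcing term involving only $u_t$ and first/second $x''$-derivatives of $u$. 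The elliptic estimate from \cite{arXiv:2003.10980} then gives, for the stated range $p\in[2,\frac{2(m+2)}{m+1})$,
$$
\|Du(t,\cdot,x'')\|_{L_p^{x'}}\lesssim \|Du(t,\cdot,x'')\|_{L_1^{x'}}+\|\mathcal{F}(t,\cdot,x'')\|_{L_q^{x'}},
$$
where $\mathcal{F}$ collects the forcing terms. Taking $L_\infty$ in $(t,x'')$, then $L_p$ in $x'$, and using Minkowski's integral inequality together with the parabolic Sobolev embedding $W^{j,q}\hookrightarrow L_\infty$ on $(t,x'')$ (whose effective parabolic dimension is $d-m-1$, so $j$ can be chosen large enough once $q$ is fixed large enough), the mixed-norm $\|\cdot\|_{L_p^{x'}L_\infty^{(t,x'')}}$ of the RHS is controlled by $\|\partial_t^{k}D_{x''}^{\alpha}u\|_{L_p^{x'}L_q^{(t,x'')}}$ for finitely many $(k,\alpha)$, which in turn is bounded by $\|Du\|_{L_1(Q_1^+)}$ from the bootstrap step above.

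The main obstacle is the third step, namely correctly combining the elliptic slice estimate of \cite{arXiv:2003.10980} with the $L_\infty^{(t,x'')}$ Sobolev embedding: one must verify that the forcing terms $\mathcal{F}$ can be absorbed on the right in the mixed norm $L_p^{x'}L_\infty^{(t,x'')}$ without losing the sharp range of $p$, and that the Minkowski step (which needs the embedding to be applied pointwise in $x'$ before taking the $L_p^{x'}$ norm) is consistent with $p\ge 2\ge q$-type orderings. The first step (bootstrap of $(t,x'')$-derivatives) is expected to be routine modulo carefully tracking constants, since the invariance directions carry no boundary data.
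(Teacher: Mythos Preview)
Your overall strategy matches the paper's: Agmon to reduce to $\lambda=0$; exploit the $(t,x'')$-invariance to differentiate freely in those directions; freeze $(t,x'')$ and apply the elliptic mixed-boundary estimate from \cite{arXiv:2003.10980} in the $x'$ variables; then combine with Sobolev embedding in $(t,x'')$ and finish by the $L_2\to L_1$ iteration as in Proposition~\ref{200825@prop1}. The paper also first makes a linear change of variables to reduce to $a_0^{ij}=\delta_{ij}$, which eliminates the cross-terms $a_0^{i'j''}$ and makes the sliced conormal condition genuinely homogeneous; without this reduction your sliced problem carries an inhomogeneous conormal datum $-a_0^{1j''}D_{j''}u$, which is harmless but adds bookkeeping.

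The one real gap is precisely the step you flag as the obstacle: the order of norms. Your phrase ``taking $L_\infty$ in $(t,x'')$, then $L_p$ in $x'$'' applied to the pointwise-in-$(t,x'')$ elliptic bound yields only $\|Du\|_{L_\infty^{(t,x'')}L_p^{x'}}$, and Minkowski goes the \emph{wrong} way here: since $p\le\infty$, one has $\|Du\|_{L_p^{x'}L_\infty^{(t,x'')}}\ge \|Du\|_{L_\infty^{(t,x'')}L_p^{x'}}$, so this does not control the target norm. The paper's fix is to take $L_2^{(t,x'')}$ (not $L_\infty$) of the elliptic slice estimate, obtaining
\[
\|Du\|_{L_2^{(t,x'')}L_p^{x'}(Q_{1/2}^+)}\lesssim \|Du\|_{L_2(Q_1^+)},
\]
then use Minkowski (valid now because $p\ge 2$) to pass to $\|Du\|_{L_p^{x'}L_2^{(t,x'')}}$. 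Since each $D_{(t,x'')}^{i}u$ solves the same problem on a smaller cylinder, the same bound holds for $\|DD_{(t,x'')}^{i}u\|_{L_p^{x'}L_2^{(t,x'')}}$ (via Lemma~\ref{200320@lem1}); finally, for \emph{fixed} $x'$ one applies the Sobolev embedding in $(t,x'')$ to upgrade the inner $L_2^{(t,x'')}$ to $L_\infty^{(t,x'')}$. In particular, your preliminary bootstrap to arbitrary $L_q$ of $\partial_t^kD_{x''}^\alpha u$ is not needed---the paper only ever uses $L_2$ control of these derivatives, and the passage to $L_\infty^{(t,x'')}$ happens \emph{after} the Minkowski swap, not before.
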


The rest of this subsection is devoted to the proof of Proposition \ref{200826@prop1}.

\begin{lemma}		\label{200320@lem1}
Under the same conditions in Proposition \ref{200826@prop1} with $\lambda=0$,
we have that for $0<r<R\le 1$,
$$
u\in W^{1,1}_2(Q_r^+), \quad u_t, D_k u\in W^{0,1}_2(Q_r^+), \quad k\in \{m+3,\ldots, d\}
$$
and
\begin{equation}		\label{201105@eq1}
\|u_t\|_{L_2(Q_r^+)}+\|Du_t\|_{L_2(Q_r^+)}+\|DD_{x''}u\|_{L_2(Q_r^+)}\lesssim_{d,\Lambda, r, R}\|Du\|_{L_2(Q_R^+)}.
\end{equation}
Generally, for $i\in \{0,1,2,\ldots\}$,  we have
\begin{equation}		\label{201105@eq2}
\|DD^i_{(t,x'')}u\|_{L_2(Q_r^+)}\lesssim_{d,\Lambda, r, R, i}\|Du\|_{L_2(Q_R^+)}.
\end{equation}
\end{lemma}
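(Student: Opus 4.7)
The plan is to exploit the translation invariance of the problem in the $t$ direction and in each $x^k$ direction with $k\in\{m+3,\ldots,d\}$, and then to apply a standard difference-quotient argument together with the Caccioppoli-type estimates already used in the proof of Lemma~\ref{200825@lem2}. Since the coefficients $a_0^{ij}$ are constants, $Q_1^+$ is invariant under translations in $t$ and $x''$. The defining function $\phi=\phi(x^3,\ldots,x^{m+2})$ depends only on $x'$-coordinates, so the decomposition of the flat piece $\{x^1=0\}\cap B_1$ into $D$ and $N$ is also preserved under translations in $t$ and in $x^k$ for $k\ge m+3$. Consequently, for small $|h|$, the translates $u(\cdot+h e_0,\cdot)$ and $u(\cdot,\cdot+h e_k)$ solve the same homogeneous mixed problem with the same constant coefficients on a slightly shrunken cylinder, and the same is true of their difference quotients $\delta_h^t u$ and $\delta_h^{x^k} u$.

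First I would record the basic boundary Caccioppoli estimate
$$
\|Dv\|_{L_2(Q_{r_1}^+)}\lesssim_{d,\Lambda} (r_2-r_1)^{-1}\|v\|_{L_2(Q_{r_2}^+)},\qquad 0<r_1<r_2\le 1,
$$
valid for any weak solution $v\in W^{0,1}_2(Q_{r_2}^+)$ of the homogeneous mixed problem (with $\lambda=0$), by testing against $\eta^2 v$ for a smooth cutoff $\eta$ supported in a neighborhood of $\{x^1=0\}\cap B_{r_2}$; this is an admissible test function because $v$ vanishes on the Dirichlet portion and $\eta$ vanishes near the curved part of $\partial Q_{r_2}^+$. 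Applied to $v=u$, this gives the first building block. For $u_t$, I would run the argument of Lemma~\ref{200825@lem2} verbatim: difference quotients in $t$ give $\|Du_t\|_{L_2(Q_\rho^+)}\lesssim (\tau-\rho)^{-1}\|u_t\|_{L_2(Q_\tau^+)}$, and testing the original equation against $\eta^2 u_t$ and using the symmetry of $(a_0^{ij})$ together with an absorption/iteration argument yields $\|u_t\|_{L_2(Q_\rho^+)}\lesssim (\tau-\rho)^{-1}\|Du\|_{L_2(Q_\tau^+)}$. For $D_k u$ with $k\in\{m+3,\ldots,d\}$, translation invariance in $x^k$ shows that $D_k u$ solves the same problem on slightly smaller cylinders, so the Caccioppoli inequality applied to $D_k u$ gives
$$
\|DD_k u\|_{L_2(Q_\rho^+)}\lesssim (\tau-\rho)^{-1}\|D_k u\|_{L_2(Q_\tau^+)}\lesssim (\tau-\rho)^{-1}\|Du\|_{L_2(Q_\tau^+)}.
$$
Choosing a suitable intermediate radius between $r$ and $R$ and combining these three bounds yields \eqref{201105@eq1}.

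The general statement \eqref{201105@eq2} then follows by iteration: since every derivative $D^{i}_{(t,x'')}u$ again solves the same homogeneous mixed problem on a slightly smaller cylinder (by the translation-invariance observation applied $i$ times), repeatedly applying the estimates from the previous paragraph, and distributing the gap $R-r$ over the $i+1$ successive passes, yields the desired bound with a constant depending additionally on~$i$. The only genuinely subtle point is to check that the $t$- and $x^k$-difference quotients are admissible, i.e., that they lie in the test-function class $C^\infty_{\tilde{\cD}}$-closure underlying the weak formulation; but this reduces precisely to the translation-invariance of $D$ and $N$ noted at the outset ($t$-cylindricity of the separation, and independence of $\phi$ from $x^k$ for $k\ge m+3$), so no new idea beyond what is already present in the proof of Lemma~\ref{200825@lem2} is required.
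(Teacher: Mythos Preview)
Your proposal is correct and follows essentially the same approach as the paper: the paper likewise refers back to the argument of Lemma~\ref{200825@lem2} for the $u_t$ and $Du_t$ bounds, and obtains the $DD_{x''}u$ estimate and the general inequality \eqref{201105@eq2} from the Caccioppoli inequality together with the fact that each $D^i_{(t,x'')}u$ satisfies the same equation and boundary conditions on slightly smaller cylinders, precisely by the translation invariance in $(t,x'')$ that you identify.
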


\begin{proof}
By the same argument used in the proof of Lemma \ref{200825@lem2}, we obtain the bounds of the first two terms in \eqref{201105@eq1}.
The bound of the last term in \eqref{201105@eq1} and also the inequality \eqref{201105@eq2} follow from the Caccioppoli inequality and  the fact that members of $D^i_{(t,x'')}u$ satisfy the same equation and the boundary conditions in $Q_{R'}^+$ for any $R'<R$.
We omit the details.
\end{proof}

We are now ready to present the proof of Proposition \ref{200826@prop1}.

\begin{proof}[Proof of Proposition \ref{200826@prop1}]
Due to Agmon's idea,  it suffices to prove the proposition for $\lambda=0$.
Also by a change of variables, we may assume that $a_0^{ij}=\delta_{ij}$.

We first claim that
\begin{equation}		\label{200831@A2}
\|Du\|_{L^{x'}_pL^{(t,x'')}_{\infty}(Q_{1/2}^+)}\lesssim_{d,\Lambda,M,p} \|Du\|_{L_2(Q_1^+)}.
\end{equation}
For almost every $(t,x'')$ with $-1<t<0$ and $|x''|<1$,
the function $v=u(t, \cdot, x'')$ satisfies
$$
\begin{cases}
\Delta_{x'} v= f & \text{in }\, B'^+_{3/4},\\
D_1 v = 0& \text{on }\, N',\\
v = 0 & \text{on }\, D',
\end{cases}
$$
where we set
$$
f=u_t(t,\cdot,x'')-\Delta_{x''} u(t,\cdot,x''),
$$
$$
B'_{3/4}=\{x'\in \bR^{m+2}: |x'|<{3/4}\},\quad B'^+_{3/4}=B_{3/4}'\cap \{x'\in \bR^{m+2}:x_1>0\},
$$
$$
D'=B'_{3/4}\cap \{x'\in \bR^{m+2}: x_1=0, x_2>\phi\},
$$
$$
N'=B'_{3/4}\cap \{x'\in \bR^{m+2}: x_1=0, x_2<\phi\}.
$$
Notice from Lemma \ref{200320@lem1} that $f\in L_2(B'^+_{3/4})$.
Thus by a rescaled version of \cite[Lemma 3.2]{arXiv:2003.10980} with a change of variables, we have
$$
\|D_{x'}v\|_{L_p(B'^+_{1/2})}\lesssim_{\Lambda, M,p}\|D_{x'}v\|_{L_2(B'^+_{3/4})}+\|f\|_{L_2(B'^+_{3/4})}.
$$
Taking $L_2$ norm in $\{(t,x''):-1<t<0, |x''|<1/2\}$ and using Lemma \ref{200320@lem1}, we obtain
$$
\|Du\|_{L_2^{(t,x'')}L_p^{x'}(Q_{1/2}^+)}\lesssim \|Du\|_{L_2(Q_1^+)},
$$
and thus by Minkowski's inequality, we have
$$
\|Du\|_{L_p^{x'}L_2^{(t,x'')}(Q_{1/2}^+)}\lesssim \|Du\|_{L_2(Q_1^+)}.
$$
From the above inequality with scaling and the fact that the members of $D^i_{(t,x'')}u$ satisfy the same equation and the boundary condition in any smaller cylinder $Q_{R'}^+$ with $R'<1$, we also have
$$
\|DD_{(t,x'')}^i u\|_{L_p^{x'}L_2^{(t,x'')}(Q_{1/2}^+)}\lesssim_{d, \Lambda, M, p, i}\|Du\|_{L_2(Q_1^+)}
$$
for $i\in \{1,2,\ldots\}$, where we also used Lemma \ref{200320@lem1}.
Therefore, by the Sobolev embedding in $(t,x'')$, there exists $k$ such that
$$
\|D u\|_{L_p^{x'}L_\infty^{(t,x'')}(Q_{1/2}^+)}\lesssim \sum_{i=0}^k \|DD^i_{(t,x'')}u\|_{L_{p}^{x'}L_2^{(t,x'')}(Q_{1/2}^+)} \lesssim \|Du\|_{L_2(Q_1^+)},
$$
which implies the claim \eqref{200831@A2}.

It is well known that \eqref{200831@A2} holds (with $p=\infty$) when  $u$ satisfies purely Dirichlet/conormal derivative boundary conditions.
The corresponding interior estimate also holds.
Hence, by scaling and a covering argument, one can see that
for $0<\rho<r\le 1$,
$$
\|Du\|_{L^{x'}_pL^{(t,x'')}_{\infty}(Q_{\rho}^+)}\lesssim  (\tau-\rho)^{\frac{m+2}{p}-\frac{d+2}{2}}\|Du\|_{L_2(Q_r^+)},
$$
which corresponds to \eqref{200831@A1}.
Similarly as in the proof of Proposition \ref{200825@prop1}, we conclude that
$$
\|Du\|_{L^{x'}_pL^{(t,x'')}_{\infty}(Q_{1/2}^+)}\lesssim\|Du\|_{L_1(Q_1^+)}.
$$
This proves the proposition for $\lambda=0$.
The proposition is proved.
\end{proof}

%========================================
\section{Proof of Theorem \ref{MT1}}		\label{S5}
%========================================

This section is devoted to the proof of Theorem \ref{MT1}.

\subsection{Decomposition}		\label{S5-1}
~
In this subsection, we consider the operator $\cP$ without lower-order terms, i.e.,
$$
\cP u=-u_t+D_i (a^{ij}D_j u).
$$
Recall that
$$
U=|Du|+\sqrt{\lambda} |u|, \quad F=|g|+\frac{|f|}{\sqrt{\lambda}}.
$$

\begin{proposition}		\label{200726@prop1}
Let
$$
p>2, \quad \gamma\in \Bigg(0, \frac{1}{160\sqrt{d+3}}\Bigg], \quad\theta\in (0,1),
$$
and $\lambda_0=\lambda_0(d, \Lambda)$ be the constant from Proposition \ref{200810@prop1}.
If  Assumptions \ref{A11} $(\gamma;m,M)$ and \ref{A2} $(\theta)$ are satisfied with these $\gamma$ and $\theta$, then the following assertion holds.
Let $u\in \cH^1_{2,\cD^T, \rm{loc}}(\cQ^T)$ satisfy
$$
\begin{cases}
\cP u-\lambda u = D_ig_i+f  & \text{in }\, \cQ^T,\\
\cB u = g_in_i& \text{on }\, \cN^T,\\
u = 0 & \text{on }\, \cD^T,
\end{cases}
$$
where $\lambda>0$, $\lambda\ge\lambda_0$, $g=(g_1,\ldots, g_d)\in L_{p,{\rm{loc}}}(\cQ^T)^d$, and $f\in L_{p, {\rm{loc}}}(\cQ^T)$.
Then for any $X_0\in \overline{\cQ^T}$ and $R\in (0, R_0]$,
there exist
$$
W,\,V\in L_2(\cQ_{\mu R}(X_0)),
$$
where $\mu=\frac{1}{16\cdot 96}$,
such that
$$
U\le W+V \quad \text{in }\, \cQ_{\mu  R}(X_0),
$$
\begin{equation}		\label{200805@eq5}
(W^2)_{\cQ_{\mu  R}(X_0)}^{1/2}\lesssim_{d,\Lambda,M,p} (\gamma+\theta)^{1/2-1/p_0} (U^{p_0})^{1/p_0}_{\cQ_{R/2}(X_0)}+(F^{2})^{1/2}_{\cQ_R(X_0)},
\end{equation}
and for $q\in \big[1,\frac{2(m+2)}{m+1}\big)$,
\begin{equation}		\label{200805@eq5a}
(V^q)^{1/q}_{\cQ_{\mu R}(X_0)}\lesssim_{d,\Lambda, M, p,q} (U^{p_0})^{1/p_0}_{\cQ_{R/2}(X_0)}+(F^2)^{1/2}_{\cQ_R(X_0)},
\end{equation}
where $p_0=p_0(d,\Lambda, M, p)\in (2,p)$ is from Lemma \ref{200710@lem5}.
\end{proposition}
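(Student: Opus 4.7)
The decomposition follows the Caffarelli--Peral blueprint: for each cylinder $\cQ_R(X_0)$, construct a comparison function $v$ whose gradient $V$ inherits the improved regularity supplied by Proposition \ref{200826@prop1}, then bound the difference $W = U - V$ using the smallness parameters $\gamma$ (Reifenberg flatness) and $\theta$ (VMO). The argument splits according to the location of $X_0$. If $\cQ_R(X_0) \cap \Gamma = \emptyset$, then within $\cQ_R(X_0)$ the lateral boundary is absent or entirely of one type, and known decompositions for VMO coefficients on Reifenberg--flat domains with pure Dirichlet or pure conormal boundary (Byun--Wang, Dong--Kim) give an even stronger conclusion, namely $V \in L_\infty$, which implies the desired bound trivially. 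The substantive case is when $\cQ_R(X_0) \cap \Gamma \neq \emptyset$. Pick $X_1 \in \Gamma$ within parabolic distance $\lesssim \mu R$ of $X_0$ and fix the coordinate system associated with $(X_1, R)$. By Assumption \ref{A11} $(\gamma; m, M)$, $\partial\Omega \cap B_R(x_1)$ lies in the slab $\{|y^1| < \gamma R\}$ and $\Gamma \cap \bQ_R(X_1)$ is $\gamma R$-close to the graph $\{y^1 = 0,\ y^2 = \phi(y^3,\ldots,y^{m+2})\}$, with $\cD$ above and $\cN$ below.

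\textbf{Construction and estimate of $V$.} Freeze $\bar a^{ij} = (a^{ij})_{\cQ_{R/2}(X_0)}$ and, on a half-cylinder $Q_\rho^+$ of radius $\rho \asymp R$, solve the model problem
\begin{equation*}
\bar{\cP} v - \lambda v = 0 \text{ in } Q_\rho^+, \quad \bar{\cB} v = 0 \text{ on } \{y^1 = 0,\ y^2 < \phi\}, \quad v = 0 \text{ on } \{y^1 = 0,\ y^2 > \phi\},
\end{equation*}
with parabolic boundary data inherited from a cutoff-and-reflection extension of $u$ across the thin strip $\{0 < y^1 < \gamma R\}$. The model separation is time-independent, so Assumption \ref{ass-0301-2356} holds and Proposition \ref{200810@prop1} delivers $v \in V_{2,\tilde{\cD}}$ satisfying a standard $L_2$ energy bound. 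Setting $V = |Dv| + \sqrt{\lambda}|v|$, Proposition \ref{200826@prop1} applied to $v$ yields
\begin{equation*}
\|V\|_{L_p^{x'} L_\infty^{(t,x'')}(Q_{R/4}^+)} \lesssim (V^2)^{1/2}_{Q_\rho^+}
\end{equation*}
for every $p \in [2, 2(m+2)/(m+1))$, and integrating with Hölder in $(t, x'')$ yields \eqref{200805@eq5a}, once the right-hand side is controlled by $(U^{p_0})^{1/p_0}_{\cQ_{R/2}(X_0)} + (F^2)^{1/2}_{\cQ_R(X_0)}$ via the reverse Hölder inequality of Lemma \ref{200710@lem5}.

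\textbf{Estimate of $W$.} Let $w = u - v$ on the common domain, extended appropriately to $\cQ_{\mu R}(X_0)$. The equation for $w$ has forcing of three types: the VMO error $(a^{ij} - \bar a^{ij}) D_j u$, whose local $L_{r'}$ mass is $\lesssim \theta^{1/r'}$ by Assumption \ref{A2} $(\theta)$ and boundedness of the coefficients; terms supported in the boundary strip $\{|y^1| < \gamma R\}$, a set of relative measure $\lesssim \gamma$; and the reflection residual. Applying the $L_2$ energy estimate of Proposition \ref{200810@prop1} to $w$, combined with Hölder's inequality on each piece and Lemma \ref{200716@lem2} -- which converts an $L_p$ norm of $u$ on the bad strip into $\gamma R$ times $\|Du\|_{L_p}$ -- yields \eqref{200805@eq5}: choosing Hölder exponent $r = p_0/2$ produces the factor $(\gamma + \theta)^{1/2 - 1/p_0}$, while the $L_{p_0}$ integrability of $|Du|$ is secured by Lemma \ref{200710@lem5}.

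\textbf{Principal obstacle.} The cutoff-reflection step is delicate for mixed data: an odd reflection is adapted to $\cD$ but not to $\cN$, and an even reflection vice versa, so no single reflection extends $u$ cleanly across the Reifenberg-flat $\partial\Omega$ near $\Gamma$. Moreover, reflecting the parabolic term produces a source involving $u_t$, which a priori lives only in $\bH^{-1}$ and cannot be absorbed directly into the $L_p$-type energy identity. I would resolve this along the lines of the multi-step decomposition foreshadowed in the introduction: iteratively peel off the Dirichlet and Neumann portions, using Lemma \ref{200716@lem2} at each step to confine the $u_t$-residual to the thin strip so that the correction enters $W$ with the desired small factor, while the principal term continues to satisfy the clean flat-boundary model equation on which Proposition \ref{200826@prop1} applies.
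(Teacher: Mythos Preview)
Your overall strategy is right and you have correctly identified the central obstacle, but your proposed resolution of that obstacle has a genuine gap. You suggest reflecting $u$ directly to the half-cylinder and then ``confining the $u_t$-residual to the thin strip'' via Lemma \ref{200716@lem2}. That lemma, however, controls $\|u\,\bI_A\|_{L_p}$ by $\gamma R\,\|Du\|_{L_p}$; it says nothing about $u_t$, and since $u_t$ lies only in $\bH^{-1}_{2}$ there is no $L_p$ norm of $u_t$ on the strip to confine. Confinement to a small set helps only when the quantity is a function whose higher $L_{p_0}$ norm you already control.

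The paper closes this gap by inserting a preliminary decomposition \emph{before} any reflection. On the original curved domain $\cQ_{R/4}$ with the (time-independent) straightened separation, one writes $\chi u = u^{(1)} + u^{(2)}$, where $u^{(1)}$ absorbs the forcing and coefficient-oscillation errors and $u^{(2)}$ solves the homogeneous constant-coefficient problem \eqref{200731@A1}. No reflection is needed here, so no $u_t$ term appears. The point of this intermediate step is Proposition \ref{200825@prop1}: because $u^{(2)}$ solves a constant-coefficient homogeneous equation with time-independent separation, one gets $u_t^{(2)}\in W^{0,1}_{p_0}$ with the quantitative bound $R^2(U_t^{(2)\,p_0})^{1/p_0}\lesssim (U^{p_0})^{1/p_0}+(F^2)^{1/2}$. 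Only then does one perform the cutoff-and-reflection on $u^{(2)}$ (not on $u$) to reach the flat half-cylinder; the reflected source now contains $u_t^{(2)}\chi$, which is a genuine $L_{p_0}$ function supported in the strip, so H\"older plus the small measure of the strip yields the factor $\gamma^{1/2-1/p_0}$ in the estimate for $u^{(3)}$. The remaining piece $u^{(4)}$ satisfies \eqref{200804@eq5} and is handled by Proposition \ref{200826@prop1} exactly as you describe. In short: the ``multi-step decomposition'' is not an iterative peeling of boundary portions, but a two-stage scheme whose first stage, on the curved domain, exists solely to upgrade the time derivative from a distribution to a function via Proposition \ref{200825@prop1}; your plan omits this proposition entirely, and without it the reflection step cannot be closed.
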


\begin{remark}		\label{200826@rmk1}
In the proposition above,  by applying the reverse H\"older's inequality in Lemma \ref{200710@lem5} to  \eqref{200805@eq5} and \eqref{200805@eq5a}, we have that
$$
(W^2)_{\cQ_{\mu  R}(X_0)}^{1/2}\lesssim  (\gamma+\theta)^{1/2-1/p_0} (U^{2})^{1/2}_{\cQ_{R}(X_0)}+(F^{p_0})^{1/p_0}_{\cQ_R(X_0)}
$$
and
$$
(V^q)^{1/q}_{\cQ_{\mu R}(X_0)}\lesssim (U^{2})^{1/2}_{\cQ_{R}(X_0)}+(F^{p_0})^{1/p_0}_{\cQ_R(X_0)}.
$$
\end{remark}

\begin{proof}[Proof of Proposition \ref{200726@prop1}]
Based on a covering argument and Lemma \ref{200707@lem2}, it suffices to consider the following four cases:
$$
\text{(i) }\, Q_R(X_0)\subset \cQ, \quad \text{(ii) }\, X_0\in \partial \cQ, \quad Q_R(X_0)\cap \partial \cQ\subset \cD,
$$
$$
\text{(iii) }\, X_0\in \partial \cQ, \quad Q_R(X_0)\cap \partial \cQ\subset \cN, \quad \text{(iv) }\, X_0\in \Gamma.
$$
Note that the first three cases can be proved by reducing the proof of Case (iv), in which we need to deal with mixed Dirichlet-conormal boundary conditions.
Therefore, we only present here the detailed proof of the proposition with $\mu =1/96$ for  Case (iv).

By translation we may assume that $X_0=(0,0)$.
Fix the coordinate system associated with the origin and $R$ satisfying the conditions in Assumption \ref{A11} $(\gamma; m, M)$, i.e.,
$$
	\{x:\gamma R<x^1\}\cap B_{R}\subset \Omega_{R}\subset \{x:-\gamma R<x^1\}\cap B_R,
$$
$$
\big(\partial \cQ\cap Q_{R}\cap \{(t,x):x^2>\phi+\gamma R\}\big)\subset \cD,
$$
$$
\big(\partial \cQ\cap Q_{R}\cap \{(t,x): x^2<\phi-\gamma R\}\big)\subset \cN.
$$
Let $\chi=\chi(x)$ be an infinitely differentiable function defined on $\bR^d$ such that
\begin{equation}\label{eqn-201108-0139-1}
	0\le \chi\le 1, \quad |D\chi|\lesssim_d \frac{1+M}{\gamma R},
\end{equation}
$$
\chi=0 \quad \text{in }\, \{x:x^1<\gamma R, \, x^2>\phi-\gamma R\},
$$
and
\begin{equation}\label{eqn-201108-0139-2}
	\chi=1 \quad \text{in }\, \bR^d\setminus \{x:x^1<2\gamma R, \, x^2>\phi-2\gamma R\}.
\end{equation}
We define
$$
\cP_0 u =-u_t+D_i (a_0^{ij}D_j u),
$$
where $a_0^{ij}=(a^{ij})_{\cQ_R}$ is symmetric, and denote by $\cB_0$ the conormal derivative operator associated with $\cP_0$.
Observe that  $\chi u$ satisfies
\begin{equation}		\label{201107@A1}
\begin{cases}
\cP_0 (\chi u)-\lambda (\chi u) = D_i g^*_i+f^* & \text{in }\, \cQ_{R/4},\\
\cB_0 (\chi u) = g^*_in_i& \text{on }\, (-(R/4)^2,0)\times N_{R/4},\\
u = 0 & \text{on }\, (-(R/4)^2,0)\times D_{R/4},
\end{cases}
\end{equation}
where
$$
D_{R/4}= \partial \Omega \cap B_{R/4}\cap  \{x: x^2>\phi-\gamma R\},
$$
$$
N_{R/4}=\partial \Omega \cap B_{R/4}\cap  \{x: x^2<\phi-\gamma R\},
$$
\begin{equation}\label{eqn-201215-1221-1}
	f^*=a^{ij}D_j uD_i\chi -g_i D_i \chi+f\chi,
\end{equation}
\begin{equation}\label{eqn-201215-1221-2}
	g^*_i=(a^{ij}_0-a^{ij})D_j (\chi u)+a^{ij}u D_j \chi+g_i \chi.
\end{equation}
Note that the separation between $N_{R/4}$ and $D_{R/4}$ is time-independent.
We decompose
\begin{equation}		\label{200731@eq2}
\chi u=u^{(1)}+u^{(2)} \quad \text{in }\, \cQ_{R/4},
\end{equation}
where $u^{(1)}\in \cH^1_2(\cQ^0)$ is a unique weak solution of the problem
\begin{equation}		\label{200716@eq1}
\begin{cases}
\cP_0 u^{(1)}-\lambda  u^{(1)} = D_i (g^*_i\bI_{\cQ_{R/4}} )+f^*\,\bI_{\cQ_{R/4}} & \text{in }\, \cQ^0,\\
\cB_0 u^{(1)} = (g^*_i\bI_{\cQ_{R/4}})n_i& \text{on }\, (-\infty,0)\times N_{R/4},\\
u^{(1)} = 0 & \text{on }\, (-\infty,0)\times (\partial \Omega\setminus N_{R/4}).
\end{cases}
\end{equation}
Note that the existence of $u^{(1)}$ is due to Lemma \ref{200810@LEM1}, and that
 $u^{(2)}\in \cH^1_2(\cQ_{R/4})$ satisfies
\begin{equation}		\label{200731@A1}
\begin{cases}
\cP_0 u^{(2)}-\lambda u^{(2)} = 0 & \text{in }\, \cQ_{R/4},\\
\cB_0 u^{(2)} = 0 & \text{on }\, (-(R/4)^2,0)\times N_{R/4},\\
u = 0 & \text{on }\, (-(R/4)^2,0)\times D_{R/4}.
\end{cases}
\end{equation}
We divide the rest of the proof into several steps.

{\it{Step 1}}.
In this first step, we estimate
$$
U^{(1)}:=|Du^{(1)}|+\sqrt{\lambda}|u^{(1)}|
$$
and
$$
U^{(2)}:=|Du^{(2)}|+\sqrt{\lambda}|u^{(2)}|, \quad U_t^{(2)}:=|Du_t^{(2)}|+\sqrt{\lambda}|u_t^{(2)}|,
$$
where $U_t^{(2)}$ is well defined in $\cQ_{R/8}$ due to Proposition \ref{200825@prop1}.
Since the boundary portions in \eqref{200716@eq1} are time-independent, we can apply $u^{(1)}$ as a test function to get
$$
\int_\Omega |u^{(1)}(0,\cdot)|^2\,dx+\int_{\cQ^0} a^{ij}_0D_j u^{(1)} D_i u^{(1)}\,dX+\lambda \int_{\cQ^0} |u^{(1)}|^2\,dX
$$
$$
=\int_{\cQ^0}  g^*_i\bI_{\cQ_{R/4}} D_i u^{(1)}\,dX-\int_{\cQ^0} f^*\,\bI_{\cQ_{R/4}} u^{(1)}\,dX.
$$
By Young's inequality, the Poincar\'e inequality, and \eqref{eqn-201215-1221-1}-\eqref{eqn-201215-1221-2}, this implies that
$$
\|U^{(1)}\|_{L_2(\cQ^0)} \lesssim_{d,\Lambda} \|F\|_{L_2(\cQ_{R/4})}+\|Du \bI_{\operatorname{supp} D\chi}\|_{L_2(\cQ_{R/4})}
+\big\|(a_0^{ij}-a^{ij})D(\chi u)\big\|_{L_2(\cQ_{R/4})}+\|uD\chi\|_{L_2(\cQ_{R/4})}.
$$
Notice from Lemma \ref{200710@lem5} that $u$ is in $\cH^1_{p_0,\rm{loc}}(\cQ^T)$.
Hence, by H\"older's inequality and Lemma \ref{200716@lem2}, we have
$$
\|Du \bI_{\operatorname{supp} D\chi}\|_{L_2(\cQ_{R/4})}
\lesssim (\gamma R^{d+2})^{1/2-1/p_0}\|Du\|_{L_{p_0}(\cQ_{R/2})}
$$
and
\begin{align}
\|uD\chi\|_{L_2(\cQ_{R/4})}
\nonumber
&\lesssim \frac{(\gamma R^{d+2})^{1/2-1/p_0}}{\gamma R} \| u \bI_{\operatorname{supp} D\chi} \|_{L_{p_0}(\cQ_{R/4})}\\
\label{200731@eq1}
&\lesssim (\gamma R^{d+2})^{1/2-1/p_0}\|Du\|_{L_{p_0}(\cQ_{R/2})},
\end{align}
which together with Assumption \ref{A2} $(\theta)$ yields
$$
\begin{aligned}
&\big\|(a_0^{ij}-a^{ij})D(\chi u)\big\|_{L_2(\cQ_{R/4})}\\
&\lesssim \|a_0^{ij}-a^{ij}\|_{L_{2p_0/(p_0-2)}(\cQ_{R/4})} \|Du\|_{L_{p_0}(\cQ_{R/4})}+\|uD\chi\|_{L_2(\cQ_{R/4})}\\
&\lesssim (\theta+\gamma)^{1/2-1/p_0}R^{(d+2)/2}(|Du|^{p_0})^{1/p_0}_{\cQ_{R/2}}.
\end{aligned}
$$
Combining the estimates above, we reach
\begin{equation}		\label{200731@eq1b}
((U^{(1)})^2)^{1/2}_{\cQ_{R/4}}\lesssim_{d,\Lambda, M,p} (\theta+\gamma)^{1/2-1/p_0}(|Du|^{p_0})^{1/p_0}_{\cQ_{R/2}}+(F^{2})^{1/2 }_{\cQ_R}.
\end{equation}
For the estimates of $U^{(2)}$ and $U_t^{(2)}$,
we use  \eqref{200731@eq2}, \eqref{200731@eq1}, and \eqref{200731@eq1b} to obtain
\begin{align}		
\nonumber
((U^{(2)})^2)_{\cQ_{R/4}}^{1/2}&\lesssim ((U^{(1)})^2)^{1/2}_{\cQ_{R/4}}+((\chi U)^2)^{1/2}_{\cQ_{R/4}}+(|u D \chi|^2)^{1/2}_{\cQ_{R/4}}\\
\nonumber
&\lesssim (U^{p_0})^{1/p_0}_{\cQ_{R/2}}+(F^{2})^{1/2}_{\cQ_R}.
\end{align}
Therefore, by Proposition \ref{200825@prop1} we get
\begin{equation}		\label{200826@eq2}
((U^{(2)})^{p_0})^{1/p_0}_{\cQ_{R/8}}+R^2((U_t^{(2)})^{p_0})^{1/p_0}_{\cQ_{R/8}}\lesssim_{d,\Lambda, M, p} (U^{p_0})^{1/p_0}_{\cQ_{R/2}}+(F^{2})^{1/2}_{\cQ_R}.
\end{equation}

{\it{Step 2}}.
In this step, we decompose $u^{(2)}$.
Since $u^{(2)}$ satisfies \eqref{200731@A1}, $\chi u^{(2)}$ satisfies
\begin{equation}		\label{200731@eq3a}
\begin{cases}
\cP_0 (\chi u^{(2)})-\lambda (\chi u^{(2)}) = D_i h_i+h & \text{in }\, Q_{R/4}^\gamma,\\
\cB_0 (\chi u^{(2)}) =h_in_i & \text{on }\, (-(R/4)^2,0)\times N^\gamma_{R/4},\\
\chi u^{(2)} = 0 & \text{on }\, (-(R/4)^2,0)\times D^{\gamma}_{R/4},
\end{cases}
\end{equation}
where
$$
Q_{R/4}^\gamma=(-(R/4)^2, 0)\times B_{R/4}^\gamma, \quad B_{R/4}^\gamma=B_{R/4}\cap \{x:x^1>\gamma R\},
$$
$$
D^\gamma_{R/4}=B_{R/4}\cap  \{x:x^1=\gamma R, \, x^2>\phi-\gamma R\},
$$
$$
N^\gamma_{R/4}=B_{R/4}\cap  \{x:x^1=\gamma R, \, x^2<\phi-\gamma R\},
$$
$$
h(t,x)=\big[a_0^{ij}D_j u^{(2)} D_i \chi\big](t,x)+[u_t^{(2)}\chi+a_0^{ij}D_j u^{(2)}D_i \chi+\lambda u^{(2)}\chi](t,z) \bI_{\Omega^*}(x),
$$
$$
h_1(t,x)=\big[a_0^{1j}D_j\chi u^{(2)}\big](t,x)+\big[a_0^{1j}D_j u^{(2)}\chi\big](t,z)\bI_{\Omega^*}(x),
$$
and for $i\in \{2,\ldots, d\}$,
$$
h_i(t,x)=\big[a_0^{ij}D_j\chi u^{(2)}\big](t,x)-\big[a_0^{ij}D_j u^{(2)}\chi\big](t,z)\bI_{\Omega^*}(x).
$$
In the above, we denote
$$
z=\mathcal{R}x=(2\gamma R-x^1,x^2,\ldots,x^d)
$$
to be the reflection $x$ with respect to $\{x^1=\gamma R\}$ and
$$
\Omega^*=\{(x^1,x'): (2\gamma R-x^1,x')\in \Omega_{R/4}\setminus B_{R/4}^\gamma\}\cap B_{R/4}^\gamma.
$$
Indeed, one can check that \eqref{200731@eq3a} holds as follows.
Let $\varphi\in C^\infty(\overline{Q_{R/4}^\gamma})$ which vanishes on $(-(R/4)^2,0)\times\big(\partial B_{R/4}^\gamma\setminus N_{R/4}^\gamma\big)$ and $\{t=-(R/4)^2,0\}\times B_{R/4}^\gamma$.
We extend $\varphi$ to $[-(R/4)^2,0]\times \{x:x^1\ge \gamma R\}$ by setting $\varphi\equiv 0$ on $[-(R/4)^2,0]\times (\{x:x^1\ge \gamma R\}\setminus B_{R/4}^{\gamma})$, and define
$$
\tilde{\varphi}(t,x)=
\begin{cases}
\varphi(t,x) &\quad \text{if }\, x^1>\gamma R,\\
\varphi(t, 2\gamma R-x^1,x^2,\ldots, x^d) &\quad \text{otherwise}.
\end{cases}
$$
Since $\chi \tilde{\varphi}$ belongs to $C^\infty(\overline{\cQ_{R/4}})$ and vanishes on $[-(R/4)^2,0]\times (\partial \Omega_{R/4}\setminus N_{R/4})$, we can test \eqref{200731@A1} with $\chi \tilde{\varphi}$  to get
$$
\int_{\cQ_{R/4}} \big( u^{(2)}_t\chi \tilde{\varphi}+a_0^{ij}D_j u^{(2)}D_i (\chi \tilde{\varphi})\,dX+\lambda u^{(2)}\chi \tilde{\varphi}\big)\,dX=0.
$$
From this identity and the definition of $\tilde{\varphi}$, it follows that
$$
\begin{aligned}
&\int_{Q_{R/4}^\gamma} \big((\chi u^{(2)})_t{\varphi}+a_0^{ij}D_j (\chi u^{(2)})D_i {\varphi}+\lambda \chi u^{(2)}{\varphi}\big)\,dX\\
&=\int_{Q_{R/4}^\gamma} a_0^{ij}D_j \chi u^{(2)} D_i \varphi\,dX -\int_{\cQ_{R/4}\setminus Q_{R/4}^\gamma} a_0^{ij}D_j u^{(2)}\chi D_i \tilde{\varphi}\,dX\\
&\quad -\int_{Q_{R/4}^\gamma} a_0^{ij}D_j u^{(2)}D_i \chi \varphi\,dX\\
&\quad -\int_{\cQ_{R/4}\setminus Q_{R/4}^\gamma} \big(u^{(2)}_t\chi+a_0^{ij}D_j u^{(2)}D_i \chi +\lambda u^{(2)}\chi\big)\tilde{\varphi}\,dX \\
&=\int_{Q_{R/4}^\gamma}\big(h_i D_i \varphi-h\varphi\big)\,dX,
\end{aligned}
$$
which is exactly the weak formulation of \eqref{200731@eq3a}.

Now we decompose
$$
\chi u^{(2)}=u^{(3)}+u^{(4)} \quad \text{in }\, Q_{R/4}^\gamma,
$$
where $u^{(3)}\in \cH^1_2((-\infty,0)\times B_{R/4}^\gamma)$ is a unique weak solution of the problem
\begin{equation}		\label{200803@eq1}
\begin{cases}
\cP_0 u^{(3)}-\lambda u^{(3)} = D_i \big(h_i\bI_{Q_{R/16}^\gamma}\big)+h \bI_{Q_{R/16}^\gamma} & \text{in }\, Q_{R/4}^\gamma,\\
\cB_0  u^{(3)} =\big(h_i \bI_{Q^{\gamma}_{R/16}}\big)n_i & \text{on }\, (-\infty,0)\times N^\gamma_{R/4},\\
u^{(3)} = 0 & \text{on }\, (-\infty,0)\times \big(\partial B_{R/4}^\gamma\setminus N^{\gamma}_{R/4}\big),
\end{cases}
\end{equation}
and $u^{(4)}\in \cH^1_2(Q_{R/4}^\gamma)$ satisfies
\begin{equation}		\label{200804@eq5}
\begin{cases}
\cP_0 u^{(4)}-\lambda u^{(4)} = 0 & \text{in }\, Q_{R/16}^\gamma,\\
\cB_0  u^{(4)} =0  & \text{on }\, (-(R/16)^2,0)\times N^\gamma_{R/16},\\
u^{(4)} = 0 & \text{on }\, (-(R/16)^2,0)\times D_{R/16}^\gamma.
\end{cases}
\end{equation}

{\it{Step 3}}.
In this step, we estimate
$$
U^{(3)}:=\big(|Du^{(3)} |+\sqrt{\lambda}|u^{(3)}|\big)\bI_{Q_{R/4}^\gamma}.
$$
By applying  $u^{(3)}$ as a test function to \eqref{200803@eq1}, we see that
$$
\int_{Q_{R/4}^\gamma}(U^{(3)})^2\,dX\lesssim \sum_{i=1}^6 J_i,
$$
where
$$
\begin{aligned}
J_1&=\int_{Q_{R/16}^\gamma}|u^{(2)} D\chi| |Du^{(3)}|\,dX,\\
J_2&=\int_{Q_{R/16}^\gamma} \big|[D u^{(2)}\chi](t,z)\bI_{\Omega^*(x)}\big| |D u^{(3)}|\,dX,\\
J_3&=\int_{Q_{R/16}^\gamma}\big|Du^{(2)}D\chi \big||u^{(3)}|\,dX,\\
J_4&=\int_{Q_{R/16}^\gamma} \big|[u_t^{(2)}\chi](t,z)\bI_{\Omega^*}(x)\big| |u^{(3)}|\,dX,\\
J_5&=\int_{Q_{R/16}^\gamma} \big|[Du^{(2)}D\chi](t,z)\bI_{\Omega^*}(x)\big| |u^{(3)}|\,dX,\\
J_6&=\int_{Q_{R/16}^\gamma} \big|[\lambda u^{(2)}\chi ](t,z)\bI_{\Omega^*}(x)\big| |u^{(3)}|\,dX.
\end{aligned}
$$
Set
$$
\cK=\gamma^{1/2-1/p_0}R^{(d+2)/2}\big((U^{p_0})^{1/p_0}_{\cQ_{R/2}}+(F^2)^{1/2}_{\cQ_R}\big)\|U^{(3)}\|_{L_2(Q_{R/4}^\gamma)}.
$$
Similar to \eqref{200731@eq1}, we have
$$
\|u^{(2)}D\chi\|_{L_2(\cQ_{R/16})}\lesssim (\gamma R^{d+2})^{1/2-1/p_0}\|Du^{(2)}\|_{L_{p_0}(\cQ_{R/8})}.
$$
Thus by \eqref{200826@eq2} we get $J_1\lesssim \cK$.
Using H\"older's inequality and \eqref{200826@eq2}, we obtain
\begin{align*}
	J_2
	&\lesssim\|D u^{(2)}\|_{L_2(\cQ_{R/16}\setminus Q_{R/16}^\gamma)}\|Du^{(3)}\|_{L_2(Q_{R/16}^\gamma)}\\
	&\lesssim (\gamma R^{d+2})^{1/2-1/p_0}\|D u^{(2)}\|_{L_{p_0}(\cQ_{R/16}\setminus Q_{R/16}^\gamma)}\|Du^{(3)}\|_{L_2(Q_{R/16}^\gamma)}\lesssim \cK
\end{align*}
and
$$
J_6
\lesssim \sqrt{\lambda}\|u^{(2)}\|_{L_2(\cQ_{R/16}\setminus Q_{R/16}^\gamma)}\cdot \sqrt{\lambda}\|u^{(3)}\|_{L_2(Q_{R/16}^\gamma)}\lesssim \cK.
$$
Similarly, we obtain
$$
\begin{aligned}
J_4&\lesssim \|u_t^{(2)}\|_{L_2(\cQ_{R/16}
\setminus Q_{R/16}^\gamma)}\|u^{(3)}\|_{L_2(Q_{R/16}^\gamma)}\lesssim (\gamma R^{d+2})^{1/2-1/p_0}\|u_t^{(2)}\|_{L_{p_0}(\cQ_{R/16}
	\setminus Q_{R/16}^\gamma)}\|u^{(3)}\|_{L_2(Q_{R/16}^\gamma)}\\
&\lesssim (\gamma R^{d+2})^{1/2-1/p_0}R^2 \|Du_t^{(2)}\|_{L_{p_0} (\cQ_{R/8})}\|Du^{(3)}\|_{L_2(Q_{\rho/16}^\gamma)}\lesssim \cK,
\end{aligned}
$$
where we also applied Lemma \ref{200820@lem2} to $u^{(2)}_t$ and the boundary Poincar\'e inequality on half balls to $u^{(3)}$. To estimate $J_3$ and $J_5$, we note that by the same argument as in the proof of Lemma \ref{200716@lem2},
we have
$$
\| u^{(3)}\bI_{\operatorname{supp}D\chi}\|_{L_2(Q_{R/16}^\gamma)} + \|u^{(3)}\bI_{\operatorname{supp}(D\chi\circ\mathcal{R})}\|_{L_2(Q_{R/16}^\gamma)}\lesssim \gamma R \|Du^{(3)}\|_{L_2(Q_{R/8}^\gamma)}.
$$
where $\mathcal{R}$ is the reflection map: $\mathcal{R}x=(2\gamma R-x^1,x^2,\ldots, x^d)$. This together with H\"older's inequality and \eqref{200826@eq2} yields that
$$
J_3+J_5\lesssim \|Du^{(2)}\bI_{\operatorname{supp}D\chi} \|_{L_2(\cQ_{R/16})}\cdot \frac{1}{\gamma R}\big(\|u^{(3)}\bI_{\operatorname{supp}D\chi} \|_{L_2(Q_{R/16}^\gamma)}+ \|u^{(3)}\bI_{\operatorname{supp}(D\chi\circ\mathcal{R})} \|_{L_2(Q_{R/16}^\gamma)}\big)
\lesssim \cK.
$$
Collecting the estimates for $J_i$ and using Young's inequality, we conclude that
\begin{equation}\label{eqn-201215-0606}
	((U^{(3)})^2)^{1/2}_{\cQ_{R/4}}\lesssim  \gamma^{1/2-1/p_0}\big((U^{p_0})^{1/p_0}_{\cQ_{R/2}}+(F^{2})^{1/2}_{\cQ_R}\big).
\end{equation}

{\it{Step 4}}.
We are ready to complete the proof of the proposition.
From the decompositions above, we have
$$
u=w+v, \quad U\le W+V \quad \text{in }\, \cQ_{R/4},
$$
where
$$
w=(1-\chi)u+u^{(1)}+(1-\chi)u^{(2)}+\chi u^{(2)}\bI_{\cQ_{R/4}\setminus Q_{R/4}^\gamma}+u^{(3)}\bI_{Q_{R/4}^\gamma}, \quad v=u^{(4)}\bI_{Q_{R/4}^\gamma},
$$
and
$$
W=|Dw|+\sqrt{\lambda} |w|, \quad V=|Dv|+\sqrt{\lambda}|v|.
$$
Observe that
$$
W\le (1-\chi)U+|uD\chi|+U^{(1)}+(1-\chi)U^{(2)}+|u^{(2)}D\chi|+|u^{(2)}
\bI_{\cQ_{R/4}\setminus Q_{R/4}^\gamma}|+U^{(3)},
$$
where by H\"older's inequality, \eqref{200826@eq2},  and Lemma \ref{200716@lem2}, we have
$$
((1-\chi)^2U^2)^{1/2}_{\cQ_{R/4}}\lesssim \gamma^{1/2-1/p_0}(U^{p_0})^{1/p_0}_{\cQ_{R/4}},
$$
\begin{equation*}
	(|uD\chi|)^{1/2}_{\cQ_{R/4}} \lesssim \frac{1}{\gamma R} (|u\mathbb{I}_{\operatorname{supp}(D\chi)}|)^{1/2}_{\cQ_{R/4}}\lesssim \gamma^{1/2-1/p_0}(U^{p_0})^{1/p_0}_{\cQ_{R/2}},
\end{equation*}
$$
((1-\chi)^2(U^{(2)})^2)^{1/2}_{\cQ_{R/8}}\lesssim  \gamma^{1/2-1/p_0}\big((U^{p_0})^{1/p_0}_{\cQ_{R/2}}+(F^{2})^{1/2}_{\cQ_R}\big),
$$
and
$$
(|u^{(2)}D\chi|^2)^{1/2}_{\cQ_{R/16}}+(|u^{(2)}\bI_{\cQ_{R/4}\setminus \cQ_{R/4}^{\gamma}}|^2)^{1/2}_{\cQ_{R/16}}\lesssim \gamma^{1/2-1/p_0}\big((U^{p_0})^{1/p_0}_{\cQ_{R/2}}+(F^{2})^{1/2}_{\cQ_R}\big).
$$
These estimates together with \eqref{200731@eq1b} and \eqref{eqn-201215-0606} imply
\begin{equation}		\label{200827@eq2}
(W^2)^{1/2}_{\cQ_{R/16}}\lesssim (\gamma+\theta)^{1/2-1/p_0}\big((U^{p_0})^{1/p_0}_{\cQ_{R/2}}+(F^{2})^{1/2}_{\cQ_R}\big).
\end{equation}
It remains to obtain the estimate of $V$.
Observe that
$$
\begin{aligned}
B_{R/96}^\gamma &\subset B_{R/48}(y_0)\cap \{x:x^1>\gamma R\}\\
&\subset B_{R/24}(y_0)\cap \{x:x^1>\gamma R\}\subset B_{R/16}^\gamma,
\end{aligned}
$$
where $y_0=(\gamma R,-\gamma R,0,\ldots, 0)\in \bR^d$.
Thus by Proposition \ref{200826@prop1} applied to \eqref{200804@eq5}, $V\leq U+W$, and \eqref{200827@eq2}, we have
$$
(V^q)_{\cQ_{R/96}}^{1/q}\lesssim (V^2)^{1/2}_{\cQ_{R/16}}\lesssim (U^2)^{1/2}_{\cQ_{R/16}} + (W^2)^{1/2}_{\cQ_{R/16}}\lesssim (U^{p_0})^{1/p_0}_{\cQ_{R/2}}+(F^{2})^{1/2}_{\cQ_R}.
$$
The proposition is proved.
\end{proof}

\subsection{Level set estimates}
For a function $v$ on $\cQ^T$, we define its maximal function by
$$
\cM v(X)=\sup_{Q_r(Z)\ni X}\dashint_{Q_r(Z)} |v(Y)| \bI_{\cQ^T}\,dY.
$$
Let $p>2$ and  $p_0=p_0(d, \Lambda,M,p)\in (2,p)$ be from Lemma \ref{200710@lem5}.
For any $\textsf{s}>0$, denote
\begin{equation}		\label{200811@EQ1}
\begin{gathered}
\cA(\textsf{s})=\{X\in \cQ^T: (\cM U^2(X))^{1/2}>\textsf{s}\},\\
\cB(\textsf{s})=\{X\in \cQ^T: (\cM U^2(X))^{1/2}+(\gamma+\theta)^{1/p_0-1/2}(\cM F^{p_0}(X))^{1/p_0}>\textsf{s}\}.
\end{gathered}
\end{equation}

\begin{lemma}		\label{200806@lem1}
Under the same conditions of Proposition \ref{200726@prop1}, for any $q\in \big[1, \frac{2(m+2)}{m+1}\big)$, there exists a constant $C_0>0$, depending only on $d$, $\Lambda$, $M$, $p$, and $q$, such that the following holds.
For any $X_0=(t_0,x_0)\in \overline{\cQ^T}$, $R\in (0, R_0]$, $\kappa\ge 4^{(d+2)/2}$, and $\textsf{s}>0$,
if
$$
|Q_{\mu R/2}(X_0)\cap \cA(\kappa \textsf{s} )|\ge C_0\bigg(\frac{(\gamma+\theta)^{1-2/p_0}}{\kappa^2}+\frac{1}{\kappa^q}\bigg)\cdot |Q_{\mu R/2}(X_0)|,
$$
 then
$$
\cQ_{\mu R/2}(X_0)\subset \cB(\textsf{s}),
$$
where $\mu=\frac{1}{16\cdot 96}$.
\end{lemma}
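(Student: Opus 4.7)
The plan is to argue by contrapositive. Suppose $\mathcal{Q}_{\mu R/2}(X_0) \not\subset \mathcal{B}(\mathsf{s})$, and pick a \emph{good point} $X_1 \in \mathcal{Q}_{\mu R/2}(X_0)$ with $(\mathcal{M}U^2(X_1))^{1/2} \le \mathsf{s}$ and $(\gamma+\theta)^{1/p_0-1/2}(\mathcal{M}F^{p_0}(X_1))^{1/p_0} \le \mathsf{s}$. The aim is to show
$$
|Q_{\mu R/2}(X_0) \cap \mathcal{A}(\kappa\mathsf{s})| \le C\bigl(\kappa^{-2}(\gamma+\theta)^{1-2/p_0} + \kappa^{-q}\bigr)|Q_{\mu R/2}(X_0)|,
$$
which contradicts the hypothesis once $C_0$ is chosen larger than this $C$.

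First I would set up the decomposition. Since $X_1$ lies within parabolic distance $\mu R$ of $X_0$, the cylinder $\mathcal{Q}_R(X_0)$ is contained in a single one-sided parabolic cylinder about $X_1$ of comparable volume, so $(U^2)^{1/2}_{\mathcal{Q}_R(X_0)} \lesssim \mathsf{s}$ and $(F^{p_0})^{1/p_0}_{\mathcal{Q}_R(X_0)} \lesssim (\gamma+\theta)^{1/2-1/p_0}\mathsf{s}$. Applying Proposition~\ref{200726@prop1} combined with Remark~\ref{200826@rmk1} produces $U \le W + V$ on $\mathcal{Q}_{\mu R}(X_0)$ with
$$
(W^2)^{1/2}_{\mathcal{Q}_{\mu R}(X_0)} \lesssim (\gamma+\theta)^{1/2-1/p_0} \mathsf{s}, \qquad (V^q)^{1/q}_{\mathcal{Q}_{\mu R}(X_0)} \lesssim \mathsf{s}.
$$

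Next I would separate the maximal function at a generic $X \in Q_{\mu R/2}(X_0)$ into a local part, sup over $Q_r(Z) \ni X$ with $Q_r(Z) \subset \mathcal{Q}_{\mu R}(X_0)$, and a non-local part coming from larger cylinders. For the non-local part, a short geometric argument shows that each such $Q_r(Z)$ is contained, after enlarging its space-radius and shifting its base-time forward, in a one-sided cylinder of comparable volume that already contains $X_1$; hence the average of $U^2\mathbb{I}_{\mathcal{Q}^T}$ over it is bounded by the (parabolic) doubling constant times $\mathcal{M}U^2(X_1) \le \mathsf{s}^2$. The assumption $\kappa \ge 4^{(d+2)/2}$ is calibrated so that the non-local contribution stays strictly below $\kappa\mathsf{s}$; thus every $X \in Q_{\mu R/2}(X_0) \cap \mathcal{A}(\kappa\mathsf{s})$ must have its \emph{local} part exceeding $(\kappa\mathsf{s})^2$.

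Finally, because $U^2 \le 2W^2 + 2V^2$ on $\mathcal{Q}_{\mu R}(X_0)$, it suffices to control the measures of $\{\mathcal{M}(W^2\mathbb{I}_{\mathcal{Q}_{\mu R}(X_0)}) > c(\kappa\mathsf{s})^2\}$ and $\{\mathcal{M}(V^2\mathbb{I}_{\mathcal{Q}_{\mu R}(X_0)}) > c(\kappa\mathsf{s})^2\}$ for an absolute $c>0$. The weak-type $(1,1)$ bound for the parabolic Hardy--Littlewood maximal operator applied to the $W$-piece yields
$$
|\{\mathcal{M}(W^2\mathbb{I}_{\mathcal{Q}_{\mu R}(X_0)}) > c(\kappa\mathsf{s})^2\}| \lesssim (\kappa\mathsf{s})^{-2}\|W\|^2_{L_2(\mathcal{Q}_{\mu R}(X_0))} \lesssim \kappa^{-2}(\gamma+\theta)^{1-2/p_0}|Q_{\mu R/2}(X_0)|.
$$
For the $V$-piece, fix (if necessary) an exponent $q' \in [\max(q,2),\ 2(m+2)/(m+1))$, which is nonempty since $2(m+2)/(m+1)>2$; the corresponding $L_{q'}$ bound holds by Proposition~\ref{200726@prop1}, and strong $(q'/2,q'/2)$ boundedness of the maximal operator combined with Chebyshev's inequality gives $\lesssim \kappa^{-q'}|Q_{\mu R/2}(X_0)| \le \kappa^{-q}|Q_{\mu R/2}(X_0)|$ (using $\kappa \ge 1$). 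Summing the two bounds and choosing $C_0$ strictly larger than the resulting constant closes the contradiction. The main obstacle is the delicate geometric bookkeeping in the non-local step — because $\mathcal{M}$ uses one-sided cylinders while $X,X_1$ are shifted in both space and time, the enlargement constant must be pinned down carefully to match the threshold $4^{(d+2)/2}$; everything else is a standard Caffarelli--Peral level-set argument.
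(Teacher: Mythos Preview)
Your overall strategy matches the paper's: contrapositive, decomposition $U\le W+V$ via Proposition~\ref{200726@prop1} and Remark~\ref{200826@rmk1}, a local/non-local split of the maximal function, then weak-type and Chebyshev bounds. The one real gap is in the geometric step. You apply the decomposition on $\cQ_{\mu R}(X_0)$, whose top time is $t_0$, and declare a cylinder $Q_r(Z)\ni X$ either ``local'' (contained in $\cQ_{\mu R}(X_0)$) or one of the ``larger cylinders.'' But the maximal function is uncentered and one-sided in time: $X\in Q_r(Z)$ only forces $z_t>t_X$, so a cylinder with tiny $r$ and $z_t\in(t_0,\min\{t_0+r^2,T\}]$ is admissible. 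Such a cylinder fails your local criterion, yet it cannot be enlarged to a comparable-volume cylinder containing the good point $X_1$, because the spatial distance $|x_1-z_x|$ may be of order $\mu R\gg r$. Thus ``non-local'' does not imply ``larger radius,'' and your dichotomy is incomplete precisely at the time-asymmetry you flag as the main obstacle.

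The paper resolves this by shifting the center forward in time \emph{before} invoking the decomposition: it works at $X_0^*=\big(\min\{t_0+(\mu R/2)^2,T\},\,x_0\big)$ rather than $X_0$. Then $\cQ_{\mu R/2}(X_0)\subset\cQ_{\mu R}(X_0^*)$, the good point (called $Y$ there) still lies in $\cQ_{\mu R}(X_0^*)$, and every admissible cylinder $Q_r(Z)$ with $r<\mu R/2$ and $z_t\le T$ automatically satisfies $Q_r(Z)\subset Q_{\mu R}(X_0^*)$, since $z_t<t_X+r^2<t_0+(\mu R/2)^2$. The split is then cleanly by radius, and the branch $r\ge\mu R/2$ is handled by the single inclusion $Q_r(Z)\cup\{Y\}\subset Q_{4r}(X_0^*)$, which is exactly where the threshold $\kappa\ge 4^{(d+2)/2}$ enters. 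Your device of passing to an auxiliary exponent $q'\in[\max(q,2),\,2(m+2)/(m+1))$ for the $V$-piece is a nice refinement; the paper's bound as written tacitly uses $q\ge2$, but it only ever applies the lemma with $q>2$, so the issue does not arise downstream.
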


\begin{proof}
By dividing the equation by $\textsf{s}$ and translating the coordinates, we may assume that $\textsf{s}=1$ and $X_0=(0,0)$.
We prove the contrapositive of the statement.
Suppose that there is a point $Y\in \cQ_{\mu R/2}$ satisfying
$$
Y\notin \cB(1).
$$
By the definition of $\cB(1)$,
\begin{equation}		\label{200811@eq5}
(\cM U^2(Y))^{1/2}+(\gamma+\theta)^{1/p_0-1/2}(\cM F^{p_0}(Y))^{1/p_0}\le 1.
\end{equation}
Set
$$
t^*_0=\min\{ (\mu R/2)^2, T\}, \quad X^*_0=(t^*_0,0)\in \overline{\cQ^T},
$$
and observe that
\begin{equation}		\label{200811@eq6}
Y\in \cQ_{\mu R/2}\subset \cQ_{\mu R}(X_0^*).
\end{equation}
From the definition of the maximal function and \eqref{200811@eq5}, we have
$$
\big(U^2\bI_{\cQ^T}\big)^{1/2}_{Q_{\mu R}(X_0^*)}+(\gamma+\theta)^{1/p_0-1/2}(F^{p_0}\bI_{\cQ^T}\big)^{1/p_0}_{Q_{\mu R}(X_0^*)}\le 1.
$$
Hence, by Proposition \ref{200726@prop1} with the estimates in Remark \ref{200826@rmk1}, there exist functions $W, V$ defined on $\cQ_{\mu R}(X_0^*)$ such that
\begin{equation}		\label{200810@eq1}
\begin{gathered}
U\le W+V \quad \text{in }\, \cQ_{\mu R}(X_0^*),\\
(W^2)^{1/2}_{\cQ_{\mu R}(X_0^*)}\lesssim_{d,\Lambda, M, p} (\gamma + \theta)^{1/2-1/p_0}, \quad (V^q)_{\cQ_{\mu R}(X_0^*)}^{1/q}\lesssim_{d,\Lambda, M, p,q}1.
\end{gathered}
\end{equation}
We now claim that for any $X\in Q_{\mu R/2}\cap \cA(\kappa)$,
\begin{equation}		\label{200810@eq2}
\big(\cM \big(W^2\bI_{\cQ_{\mu R}(X_0^*)}\big)(X)\big)^{1/2}+\big(\cM\big(V^2\bI_{\cQ_{\mu R}(X_0^*)}\big)(X)\big)^{1/2}>\kappa.
\end{equation}
By the definition of $\cA$,  we can find $Q_r(Z)$ satisfying
\begin{equation}
                                        \label{eq2.48}
X\in Q_r(Z), \quad \big(U^2\bI_{\cQ^T}\big)^{1/2}_{Q_r(Z)}>\kappa.
\end{equation}
We can always choose the time coordinate of $Z$ to be at most $T$. Furthermore, we have $r<\mu R/2$ because otherwise, from \eqref{200811@eq5}, \eqref{200811@eq6}, and the fact that
$$
Q_r(Z)\subset Q_{4r}(X_0^*),
$$
we get
$$
\begin{aligned}
\big(U^2\bI_{\cQ^T}\big)^{1/2}_{Q_r(Z)}
&\le 4^{(d+2)/2}\big(U^2\bI_{\cQ^T}\big)^{1/2}_{Q_{4r}(X_0^*)}\\
&\le 4^{(d+2)/2}(\cM U^2(Y))^{1/2}\\
&\le 4^{(d+2)/2}\le \kappa,
\end{aligned}
$$
which contradict \eqref{eq2.48}.
Since $r<\mu R/2$, we have
$$
Q_r(Z)\subset Q_{\mu R}(X_0^*),
$$
and thus by \eqref{eq2.48},
$$
\begin{aligned}
\kappa&<\big(U^2\bI_{\cQ^T}\big)^{1/2}_{Q_r(Z)}
\le \big(W^2\bI_{\cQ^T}\big)^{1/2}_{Q_r(Z)}+\big(V^2\bI_{\cQ^T}\big)^{1/2}_{Q_r(Z)}\\
&\le \big(W^2\bI_{\cQ_{\mu R}(X_0^*)}\big)^{1/2}_{Q_r(Z)}+\big(V^2\bI_{\cQ_{\mu R}(X_0^*)}\big)^{1/2}_{Q_r(Z)}\\
&\le \big(\cM \big(W^2\bI_{\cQ_{\mu R}(X_0^*)}\big)(X)\big)^{1/2}+\big(\cM\big(V^2\bI_{\cQ_{\mu R}(X_0^*)}\big)(X)\big)^{1/2},
\end{aligned}
$$
from which we get the claim.

By \eqref{200810@eq1}, \eqref{200810@eq2}, and the Hardy-Littlewood theorem, we have
$$
\begin{aligned}
&|Q_{\mu R/2}\cap \cA(\kappa)| \\
&\le  \big|\big\{X: \big(\cM \big(W^2\bI_{\cQ_{\mu R}(X_0^*)}\big)(X)\big)^{1/2}+\big(\cM\big(V^2\bI_{\cQ_{\mu R}(X_0^*)}\big)(X)\big)^{1/2}>\kappa\big\}\big|\\
&\le \big|\big\{X: \big(\cM \big(W^2\bI_{\cQ_{\mu R}(X_0^*)}\big)(X)\big)^{1/2}>\kappa/2\big\}\big|\\
&\quad + \big|\big\{X: \big(\cM \big(V^2\bI_{\cQ_{\mu R}(X_0^*)}\big)(X)\big)^{1/2}>\kappa/2\big\}\big|\\
&\le  \frac{C}{\kappa^2}\|W\|_{L_2(\cQ_{\mu R}(X_0^*))}^2+ \frac{C}{\kappa^q} \|V\|^q_{L_q(\cQ_{\mu R}(X_0^*))}\\
&\le C\bigg(\frac{(\gamma+\theta)^{1-2/p_0}}{\kappa^2}+\frac{1}{\kappa^q}\bigg)|\cQ_{\mu R}(X_0^*)|\\
&\le C\bigg(\frac{(\gamma+\theta)^{1-2/p_0}}{\kappa^2}+\frac{1}{\kappa^q}\bigg)|Q_{\mu R/2}|,
\end{aligned}
$$
where $C=C(d,\Lambda, M, p,q)$.
This completes the proof.
\end{proof}

As a consequence of the previous lemma, we have the following regularity result for $\cH^1_2$ weak solutions.

\begin{lemma}		\label{200811@lem2}
Let $p\in \big(2, \frac{2(m+2)}{m+1}\big)$ and $\lambda_0=\lambda_0(d, \Lambda)$ be from Proposition  \ref{200810@prop1}.
There exist constants $\gamma, \theta\in (0, 1)$ depending only on $d$, $\Lambda$, $M$, and $p$, such that if
Assumptions  \ref{A11} $(\gamma;m,M)$ and \ref{A2} $(\theta)$ are satisfied with these $\gamma$ and $\theta$, then the following assertions hold.
Let $u\in \cH^1_{2,\cD^T}(\cQ^T)$ satisfy
$$
\begin{cases}
\cP u-\lambda u = D_ig_i+f  & \text{in }\, \cQ^T,\\
\cB u = g_in_i& \text{on }\, \cN^T,\\
u = 0 & \text{on }\, \cD^T,
\end{cases}
$$
where $\lambda>0$, $\lambda\ge \lambda_0$, and  $g_i, f\in L_2(\cQ^T)\cap L_p(\cQ^T)$.
Then $u$ belongs to  $\cH^1_{p, \cD^T}(\cQ^T)$ and satisfies
\begin{equation}		\label{200812@eq1}
\|U\|_{L_p(\cQ^T)}\lesssim_{d,\Lambda, M, p} R_0^{(d+2)(1/p-1/2)}\|U\|_{L_2(\cQ^T)}+ \|F\|_{L_p(\cQ^T)}.
\end{equation}
Moreover, if $u$ vanishes outside $\cQ_{\gamma R_0}(X_0)$ for some $X_0\in \bR^{d+1}$, then
\begin{equation}		\label{200812@eq1a}
\|U\|_{L_p(\cQ^T)}\lesssim_{d,\Lambda, M, p} \|F\|_{L_p(\cQ^T)}.
\end{equation}
\end{lemma}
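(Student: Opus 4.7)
The strategy combines the density/level-set estimate from Lemma~\ref{200806@lem1} with the Krylov--Safonov crawling of ink spots lemma to produce a good-$\lambda$ inequality, which is then integrated in the spirit of the Caffarelli--Peral level-set approach~\cite{MR1486629}. Fix an auxiliary exponent $q\in(p,\tfrac{2(m+2)}{m+1})$ (possible since $p<\tfrac{2(m+2)}{m+1}$) and let $p_0=p_0(d,\Lambda,M,p)\in(2,p)$ be the exponent from Lemma~\ref{200710@lem5}.

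Applied at the maximal scale $\mu R_0/2$, a parabolic ink-spots covering converts the contrapositive of Lemma~\ref{200806@lem1} into the good-$\lambda$ inequality
\begin{equation*}
|\cA(\kappa s)| \leq C\bigl(C_0\kappa^{-2}(\gamma+\theta)^{1-2/p_0}+C_0\kappa^{-q}\bigr)|\cB(s)|,\qquad s>s_0,
\end{equation*}
where $s_0$ is of order $R_0^{-(d+2)/2}\|U\|_{L_2(\cQ^T)}$: the scale restriction $R\leq R_0$ in Lemma~\ref{200806@lem1} forces a lower cutoff, and one checks via Chebyshev that for $s>s_0$ the set $\cA(\kappa s)$ is sparse enough relative to $R_0^{d+2}$ for the Vitali/dyadic cover to proceed with sub-$R_0$ cubes. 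Multiplying by $ps^{p-1}$, integrating over $(\kappa s_0,\infty)$ via the substitution $s\mapsto\kappa s$, and using $\int_0^\infty ps^{p-1}|\{f>s\}|\,ds=\|f\|_{L_p}^p$ together with the triangle inequality applied to $|\cB(s)|$, I obtain
\begin{equation*}
\int_{\kappa s_0}^\infty ps^{p-1}|\cA(s)|\,ds\leq C\kappa^p\bigl((\gamma+\theta)^{1-2/p_0}\kappa^{-2}+\kappa^{-q}\bigr)\Bigl(\|(\cM U^2)^{1/2}\|_{L_p}^p+(\gamma+\theta)^{p/p_0-p/2}\|(\cM F^{p_0})^{1/p_0}\|_{L_p}^p\Bigr).
\end{equation*}
The low-level contribution is handled by the weak-$(1,1)$ bound $|\cA(s)|\leq Cs^{-2}\|U\|_{L_2}^2$, giving
\begin{equation*}
\int_0^{\kappa s_0}ps^{p-1}|\cA(s)|\,ds\lesssim \kappa^{p-2}s_0^{p-2}\|U\|_{L_2}^2\lesssim \kappa^{p-2}R_0^{(d+2)(2-p)/2}\|U\|_{L_2}^p,
\end{equation*}
which is precisely the $p$-th power of the $R_0^{(d+2)(1/p-1/2)}\|U\|_{L_2}$ term appearing in \eqref{200812@eq1}.

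To close the argument, first choose $\kappa$ large so that $C\kappa^{p-q}<\tfrac{1}{4}$ (valid since $q>p$), then shrink $\gamma,\theta$ so that $C\kappa^{p-2}(\gamma+\theta)^{1-2/p_0}<\tfrac{1}{4}$ (valid since $1-2/p_0>0$); this fixes the required smallness $\gamma=\gamma(d,\Lambda,M,p)$ and $\theta=\theta(d,\Lambda,M,p)$. After absorbing $\|(\cM U^2)^{1/2}\|_{L_p}^p$ into the left-hand side and invoking the Hardy--Littlewood maximal function theorem (valid since $p>p_0>2$ on $U$ and $p>p_0$ applied to $F^{p_0}$ in $L_{p/p_0}$), we arrive at \eqref{200812@eq1}. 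A priori finiteness of $\|(\cM U^2)^{1/2}\|_{L_p}$ needed for the absorption is secured by a standard truncation $U\wedge N$ with $N\to\infty$ at the end. For \eqref{200812@eq1a}, the vanishing of $U$ outside $\cQ_{\gamma R_0}(X_0)$ and Hölder's inequality give
\begin{equation*}
R_0^{(d+2)(1/p-1/2)}\|U\|_{L_2(\cQ^T)}\lesssim R_0^{(d+2)(1/p-1/2)}|\cQ_{\gamma R_0}|^{1/2-1/p}\|U\|_{L_p(\cQ^T)}\lesssim \gamma^{(d+2)(1/2-1/p)}\|U\|_{L_p(\cQ^T)},
\end{equation*}
which is absorbed into the left-hand side upon further shrinking $\gamma$ if necessary.

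The hard part will be setting up the good-$\lambda$ inequality rigorously in the unbounded cylinder $\cQ^T$ under the scale restriction $R\leq R_0$ in Lemma~\ref{200806@lem1}: this constraint is exactly what forces the lower threshold $s_0$, splits the integration into high- and low-level pieces, and ultimately generates the $R_0^{(d+2)(1/p-1/2)}\|U\|_{L_2}$ correction term in \eqref{200812@eq1}.
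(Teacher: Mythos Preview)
Your proposal is correct and follows essentially the same route as the paper: fix $q\in(p,\tfrac{2(m+2)}{m+1})$, feed Lemma~\ref{200806@lem1} into the Krylov--Safonov crawling of ink spots lemma to obtain $|\cA(\kappa s)|\le C\varepsilon|\cB(s)|$ for $s>s_0$ with $s_0\sim R_0^{-(d+2)/2}\|U\|_{L_2}$, then integrate $s^{p-1}$ against this, split the $\cB$ set via $\cB(s)\subset \cA(s/2)\cup\{(\gamma+\theta)^{1/p_0-1/2}(\cM F^{p_0})^{1/p_0}>s/2\}$, choose $\kappa$ large and then $\gamma,\theta$ small so that $C\varepsilon\kappa^p\le\tfrac12$, and absorb. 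The treatment of \eqref{200812@eq1a} via H\"older on $\cQ_{\gamma R_0}(X_0)$ is also identical.

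One small point of care: your phrase ``truncation $U\wedge N$'' for the finiteness needed to absorb is slightly off if read literally, since $U\wedge N$ does not come from a solution and Lemma~\ref{200806@lem1} would not apply to it. The paper instead integrates $\int_0^{\kappa S}|\cA(s)|s^{p-1}\,ds$ with a finite upper limit $S$, observes this is finite by the weak-$(1,1)$ bound $|\cA(s)|\lesssim s^{-2}\|U\|_{L_2}^2$ (since $p>2$ the integrand is locally integrable near $0$ and the integral over $[0,\kappa S]$ is finite), absorbs at each finite $S$, and then lets $S\to\infty$. This is presumably what you mean, and it is the clean way to execute your outline.
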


\begin{proof}
We denote
$$
q=\frac{1}{2}\bigg(p+\frac{2(m+2)}{m+1}\bigg)\in \bigg(p,\frac{2(m+2)}{m+1}\bigg).
$$
Let $\gamma$, $\theta$, and $\kappa$ be positive constants to be chosen later, such that
$$
\gamma \in \Bigg(0, \frac{1}{160\sqrt{d+3}}\Bigg],\quad  \theta\in (0,1), \quad \kappa\ge 4^{(d+2)/2},
$$
and that
$$
\varepsilon:=C_0\bigg(\frac{(\gamma+\theta)^{1-2/p_0}}{\kappa^2}+\frac{1}{\kappa^q}\bigg)<1,
$$
where $C_0=C_0(d,\Lambda, M, p,q)=C_0(d,\Lambda, M, p)$ is the constant from Lemma \ref{200806@lem1}.
Recall the notation  \eqref{200811@EQ1}.
By the Hardy-Littlewood theorem, we have
\begin{equation}		\label{200811@eq1}
|\cA(\kappa \mathsf{s})|\le \frac{C_1(d)}{(\kappa \mathsf{s})^2}\|U\|_{L_2(\cQ^T)}^2.
\end{equation}
Using this, Lemma \ref{200806@lem1}, and the ``crawling of ink spots" lemma in \cite{MR0579490,MR0563790}, we get
$$
|\cA(\kappa \mathsf{s})|\le C_2(d)\varepsilon|\cB(\mathsf{s})|
$$
for any $\mathsf{s}>\mathsf{s}_0$, where
$$
\mathsf{s}_0^2=\frac{C_1}{\varepsilon\kappa^2 |Q_{\mu R_0/2}|}\|U\|_{L_2(\cQ^T)}^2.
$$
Hence, for a sufficiently large $\mathsf{S}>\mathsf{s}_0$,
$$
\begin{aligned}
&\int_0^{\kappa \mathsf{S}} |\cA(\mathsf{s})|\mathsf{s}^{p-1}\,d\mathsf{s}=\kappa^p\int^{\mathsf{S}}_0 |\cA(\kappa \mathsf{s})|\mathsf{s}^{p-1}\,d\mathsf{s}\\
&\le \kappa^p\int_0^{\mathsf{s}_0}|\cA(\kappa \mathsf{s})|\mathsf{s}^{p-1}\,d\mathsf{s}+C_2\varepsilon\kappa^p \int_{\mathsf{s}_0}^{\mathsf{S}} |\cB(\mathsf{s})|\mathsf{s}^{p-1}\,d\mathsf{s}\\
&=:I_1+I_2.
\end{aligned}
$$
By \eqref{200811@eq1} we have
$$
I_1\lesssim_{d,p}\varepsilon^{1-p/2}R_0^{(d+2)(1-p/2)}\|U\|_{L_2(\cQ^T)}^p,
$$
To estimate $I_2$, observe that
$$
\cB(\mathsf{s})\subset \cA(\mathsf{s}/2)\cup \big\{X\in \cQ^T:(\gamma+\theta)^{1/p_0-1/2}\big(\cM F^{p_0}(X)\big)^{1/p_0}>\mathsf{s}/2\big\},
$$
from which together with the Hardy-Littlewood maximal function theorem, we obtain
$$
\begin{aligned}
I_2
&\le C_2\varepsilon \kappa^p \int_{\mathsf{s}_0}^{\mathsf{S}}|\cA(\mathsf{s}/2)|\mathsf{s}^{p-1}\,d\mathsf{s}+C \varepsilon \kappa^p (\gamma+\theta)^{p(1/p_0-1/2)}\|(\cM F^{p_0})^{1/p_0}\|_{L_p(\cQ^T)}^p\\
&\le C_*\varepsilon \kappa^p \int_0^{\kappa\mathsf{S}}|\cA(\mathsf{s})|\mathsf{s}^{p-1}\,d\mathsf{s}+C_* \varepsilon \kappa^p (\gamma+\theta)^{p(1/p_0-1/2)}\|F\|_{L_p(\cQ^T)}^p,
\end{aligned}
$$
where $C_*=C_*(d,p)$.
Note that by \eqref{200811@eq1},
$$
\int_0^{\kappa\mathsf{S}}|\cA(\mathsf{s})|\mathsf{s}^{p-1}\,d\mathsf{s}<\infty.
$$
Thus by taking $\kappa$ sufficiently large, and then, choosing $\theta$ and $\gamma$ sufficiently small such that
$$
C_*\varepsilon \kappa^p=C_*C_0\bigg(\kappa^{p-2}(\gamma+\theta)^{1-2/p_0}+\kappa^{p-q}\bigg)\le \frac{1}{2},
$$
we conclude that for any $\mathsf{S}>0$,
$$
\int_0^{\kappa \mathsf{S}} |\cA(\mathsf{s})|\mathsf{s}^{p-1}\,d\mathsf{s}\lesssim_{d,\Lambda, M, p}R_0^{(d+2)(1-p/2)}\|U\|_{L_2(\cQ^T)}^p+\|F\|_{L_p(\cQ^T)}^p.
$$
This shows \eqref{200812@eq1}.
If we further assume that $u$ vanishes outside $\cQ_{\gamma R_0}(X_0)$, then by using \eqref{200812@eq1}, H\"older's inequality, and choosing again $\gamma$ sufficiently small, we conclude \eqref{200812@eq1a}.
The lemma is proved.
\end{proof}

%========================================
\subsection{Proof of Theorem \ref{MT1}}		
%========================================

Thanks to Proposition \ref{200810@prop1} and a duality argument,
it suffices to consider the case when $p\in \big(2,\frac{2(m+2)}{m+1}\big)$.

For the a priori estimate \eqref{200812@eq2} in the assertion $(a)$, by moving all the lower-order terms to the right-hand side of the equation, we may assume that the lower-order coefficients of $\cP$ are all zero.
Then the a priori estimate follows from \eqref{200812@eq1a} and the standard partition of unity argument.
For the solvability in the assertion $(a)$, thanks to the a priori estimate and the method of continuity, we only need to consider the case when the lower-order terms are all zero, which follows from the regularity result in Lemma \ref{200811@lem2} and the a priori estimate along with the standard approximation argument.
Finally, the proof of the assertion $(b)$ is by considering the equation of $ue^{-\lambda_0 t}$; cf.  \cite[Theorem 8.2 (iii)]{MR2835999}.
\qed

\appendix
%========================================
\section{}		\label{appendix}
%========================================

\begin{proof}[Proof of Lemma \ref{200820@lem2}]
We  may assume that $X_0=(0,0)$ and $u$ is smooth with respect to $t$.
By scaling, without loss of generality, we can also assume that $R=1$.

We first prove \eqref{eqn-200919-1145}.
Take a function $\zeta\in C^\infty_0(\Omega_{3/2})$ such that
$$
0\le \zeta\le 1,  \quad 1\lesssim \int_\Omega \zeta\,dz, \quad |D\zeta|\lesssim 1.
$$
Set
$$
v(t)=\int_{\Omega}\zeta u(t,\cdot)\,dx \bigg/\int_\Omega \zeta\,dx, \quad c=\int^0_{-(3/2)^2} v(t)\,dt.
$$
By \cite[Lemmas 5.3 and 5.4]{arXiv:2007.01986} with scaling applied to $u-c$, we have
\begin{equation}		\label{201106@eq1}
\begin{aligned}
\|u-(u)_{\cQ_1}\|_{L_{q_0, p_0}(\cQ_1)}
&\le2\|u-c\|_{L_{q_0,p_0}(\cQ_1)}
\lesssim_{d,p,q,p_0,q_0} \|u-c\|_{L_{q,p}(\cQ_{3/2})}\\
&\quad +\big(\|Du\|_{L_{q,p}(\cQ_{3/2})}
+\|g\|_{L_{q,p}(\cQ_{3/2})}\big),
\end{aligned}
\end{equation}
where, by the triangle inequality, we obtain
\begin{equation}		\label{201106@eq1a}
\|u-c\|_{L_{q,p}(\cQ_{3/2})}\le \|u-v\|_{L_{q,p}(\cQ_{3/2})}+\|v-c\|_{L_{q,p}(\cQ_{3/2})}=:J_1+J_2.
\end{equation}
Note that
$$
\|u(t,\cdot)-v(t)\|_{L_p(\Omega_{3/2})}\lesssim \|u(t,\cdot)-(u(t,\cdot))_{\Omega_{3/2}}\|_{L_p(\Omega_{3/2})}\lesssim \|Du(t,\cdot)\|_{L_p(\Omega_{2})},
$$
where we used Lemma \ref{200708@lem2} and the interior Poincar\'e inequality with a covering argument in the second inequality.
This implies that
$$
J_1\lesssim \|Du\|_{L_{q,p}(\cQ_2)}.
$$
By the Sobolev-Poincar\'e inequality in the $t$ variable, we get
\begin{equation*}
J_2\lesssim \int_{-(3/2)^2}^0 \bigg|\int_\Omega \zeta u_t\,dx\bigg|\,dt.
\end{equation*}
Since $u_t=D_i g_i$
and $\zeta$ has compact support in $\Omega_{3/2}$, integrating by parts with respect to $x$ and using H\"older's inequality,  we see that
$$
J_2\lesssim \|g\|_{L_{q,p}(\cQ_{2})}.
$$
Combining \eqref{201106@eq1}, \eqref{201106@eq1a}, and the estimates of $J_i$, we conclude \eqref{eqn-200919-1145}.

We next prove \eqref{eqn-200920-0425}.
Let $Y_0=(s_0,y_0)$. It is easily seen that the estimates above still holds with
$$
c=\int^{s_0}_{s_0-\alpha^2} v(t)\,dt.
$$
Thus, we have
$$
\|u\|_{L_{q_0, p_0}(\cQ_1)}	\lesssim_{d,p,q,p_0,q_0} \|Du\|_{L_{q,p}(\cQ_{2})}+\|g\|_{L_{q,p}(\cQ_{2})}+c.
$$
Therefore, it suffices to show that
$$
c\lesssim \|u\|_{L_1((s_0-\alpha^2,s_0)\times \Omega_1)}
\lesssim  \|Du\|_{L_{q,p}(\cQ_{2})},
$$
which follows from Lemma \ref{200508@lem1} and H\"older's inequality.
The lemma is proved.
\end{proof}

In the lemmas below, we set
$$
\Gamma(t)=\{x\in \partial \Omega: (t,x)\in \Gamma\}.
$$
Similarly, we define $\cD(t)$ and $\cN(t)$.

\begin{lemma}		\label{211112@lem1}
For $\gamma>0$, the condition $(b)$ in
Assumption \ref{A11} $(\gamma;m,M)$ implies Assumption \ref{ass-0301-2356}.
\end{lemma}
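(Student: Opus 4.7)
Given $\epsilon > 0$ and $L > 0$, set $I := [\max\{S,-L\}, \min\{T,L\}]$. The plan is to first establish a temporal equicontinuity of the slices $\cD(t)$ as a direct consequence of condition (b), and then extract the required partition of $I$ by compactness.

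The key geometric input is the following consequence of condition (b): at any $X_0 = (t_0, x_0) \in \Gamma$ and scale $R \in (0, R_0]$, condition (b) confines $\Gamma \cap \bQ_R(X_0)$ to the \emph{time-independent} $2\gamma R$-strip $\{|y^2 - \phi(y^3, \ldots, y^{m+2})| \leq \gamma R\}$, where $\phi$ depends on spatial variables only. Hence for every $t \in (t_0 - R^2, t_0 + R^2)$, the symmetric difference $(\cD(t) \triangle \cD(t_0)) \cap B_R(x_0)$ lies in this strip, so every point of $\cD(t) \cap B_R(x_0)$ is within distance $2\gamma R$ of $\cD(t_0) \cap B_R(x_0)$, and conversely. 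Fixing $R > 0$ with $2\gamma R < \epsilon/2$, the next step is to show, for each $t_* \in I$, the existence of an open time window $J_{t_*} \ni t_*$ on which $H^d(\cD(s), \cD(t_*)) < \epsilon/2$ for all $s \in J_{t_*} \cap I$. I would split $\partial\Omega$ into three parts according to whether $x$ belongs to $\Gamma(t_*)$, $\cD(t_*)$, or $\cN(t_*) \setminus \Gamma(t_*)$: the first case is handled by condition (b) at $(t_*, x)$ via the estimate above; the second and third exploit the fact that both $\cD$ and $\cN \setminus \Gamma$ are relatively open in $\partial\cQ$, yielding product-form neighborhoods on which the $\cD$/$\cN$ classification is locally constant in $t$. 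A finite spatial subcover then determines $J_{t_*}$ as the intersection of the finitely many time windows involved.

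Compactness of $I$ produces finitely many $J_{t_*^{(k)}}$ covering $I$, which I would refine to a partition $\max\{S,-L\} = t_0 < t_1 < \cdots < t_n = \min\{T,L\}$ with $[t_{k-1}, t_k) \subset J_{t_*^{(k)}}$. Setting $D^{t_k} := \bigcup_{t \in [t_{k-1}, t_k]} \cD(t)$ and $N^{t_k} := \partial\Omega \setminus D^{t_k}$ makes $\cD(t) \subset D^{t_k}$ automatic; for the reverse bound, any $y \in D^{t_k}$ lies in some $\cD(s)$ with $s \in [t_{k-1}, t_k]$, whence $\operatorname{dist}(y, \cD(t)) \leq H^d(\cD(s), \cD(t)) < \epsilon$. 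The main obstacle I anticipate is that $\partial\Omega$, and potentially $\Gamma(t_*)$, may be unbounded, so the finite subcover used to construct $J_{t_*}$ requires justification. The crucial observation there is that $\partial\cQ \setminus \Gamma = \cD \cup (\cN \setminus \Gamma)$ is open in $\partial\cQ$, so any $x \in \partial\Omega$ off $\Gamma(t_*)$ admits a space-time neighborhood on which the classification is constant; thus any discrepancy between $\cD(t)$ and $\cD(t_*)$ is confined to a neighborhood of $\Gamma(t_*)$, which can be captured by a finite cover via the balls from condition (b) together with a standard exhaustion-plus-truncation argument.
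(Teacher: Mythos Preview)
Your approach can be made to work, but it is more circuitous than the paper's and the unboundedness concern you raise is a symptom of that detour rather than an intrinsic difficulty. The paper proceeds directly: fix $R\in(0,R_0]$ with $2\gamma R<\varepsilon$, take the \emph{uniform} partition of $[\max\{S,-L\},\min\{T,L\}]$ into subintervals of length at most $R^2$, and for each node $t_0$ \emph{construct} $D^{t_0}$ explicitly as $\cD(t_0)$ together with all $x\in\partial\Omega$ satisfying $\operatorname{dist}(x,\Gamma(t_0))<R$ and $x^2>\phi(x^3,\ldots,x^{m+2})-\gamma R$ in the coordinate system of condition (b) centered at a nearest point $x_0\in\Gamma(t_0)$. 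No compactness enters; the step size $R^2$ is uniform precisely because condition (b) is stated uniformly in $X_0\in\Gamma$ and $R\in(0,R_0]$.

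Your argument instead seeks a $t_*$-dependent window $J_{t_*}$ by covering $\partial\Omega$ with three kinds of charts and then intersecting time windows, which forces you to confront a finite spatial subcover that may not exist when $\partial\Omega$ is unbounded. But the connectedness observation you make at the end already gives what is needed without any cover: if $x\in\cD(s)\triangle\cD(t_*)$ with $|s-t_*|<R^2$, then $x\in\Gamma(s')$ for some intermediate $s'$, and condition (b) at $((s',x),R)$ (equivalently, Lemma \ref{200707@lem2}) places $x$ within $R$ of $\Gamma(t_*)$. Thus the symmetric difference lies in the $R$-tube around $\Gamma(t_*)$ uniformly for $|s-t_*|<R^2$, and inside that tube condition (b) controls everything to within $2\gamma R$. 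Once you see this, the compactness layer, the finite subcover, and the exhaustion--truncation patch all fall away, and what remains is essentially the paper's direct construction.
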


\begin{proof}
Let $R\in (0, R_0]$ and $t_0\in \bR$.
It suffices to show that, under the condition $(b)$, there exist decompositions
$$
\partial \Omega= D^{t_0}\cup N^{t_0}
$$
such that
$$
\cD(t)\subset D^{t_0}, \quad H^d(\cD(t), D^{t_0})\le 2\gamma R, \quad \forall t\in [t_0-R^2, t_0).
$$
Indeed, this follows by defining $D^{t_0}$ as the set of all points $x=(x^1, \ldots, x^d)\in \partial \Omega$ satisfying either $x\in \cD(t_0)$
or
$$
\operatorname{dist}(x, \Gamma(t_0))<R \quad \text{and}\quad
x^2>\phi(x^3,\ldots, x^{m+2})-\gamma R
$$
in the coordinate system associated with $((t_0, x_0), R)$, where $x_0\in \Gamma(t_0)$ satisfies
$\operatorname{dist}(x, x_0)=\operatorname{dist}(x, \Gamma(t_0))$.
\end{proof}

\begin{lemma}		\label{200707@lem2}
Suppose that the condition $(b)$ in Assumption \ref{A11} $(\gamma;m,M)$ holds with $\gamma\in [0,1)$.
\begin{enumerate}[$(i)$]
\item
Let $X_0=(t_0,x_0)\in \Gamma$ and $R\in (0, R_0]$.
Then for $t\in (t_0-R^2,t_0+R^2)$,
$$
B_R(x_0)\cap \Gamma(t)\neq \emptyset.
$$
\item
Let $X_0=(t_0,x_0)\in \overline{\cQ}$ and $R\in (0, R_0]$ with
$$
B_R(x_0)\cap \Gamma(t_0)= \emptyset.
$$
Then
$$
Q_{R/2}(X_0)\cap \Gamma=\emptyset.
$$
\end{enumerate}
\end{lemma}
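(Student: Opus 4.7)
The plan is to prove Part (i) directly using the coordinate description from Assumption \ref{A11}(b), and then to deduce Part (ii) from Part (i) by a short contradiction argument.

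For Part (i), fix $X_0=(t_0,x_0)\in\Gamma$ and $R\in(0,R_0]$ and work in the coordinate system associated with $(X_0,R)$, so that $\phi(x_0^3,\ldots,x_0^{m+2})=x_0^2$ and the splitting of $\partial\cQ\cap \bQ_R(X_0)$ between $\cD$ and $\cN$ is governed by the graph of $\phi$ with margin $\gamma R$. Fix $t\in(t_0-R^2,t_0+R^2)$. The strategy is to exhibit two points of $\partial\Omega\cap B_R(x_0)$, one in the ``$\cD$-safe'' region $\{y^2>\phi+\gamma R\}$ and one in the ``$\cN$-safe'' region $\{y^2<\phi-\gamma R\}$, and then invoke connectedness of $\partial\Omega\cap B_R(x_0)$ to force the existence of a transition point in $\Gamma(t)$.

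To produce the two points, I would fix $s$ with $\gamma R<|s|<R\sqrt{1-\gamma^2}$ (non-vacuous once $\gamma<1/\sqrt{2}$, the regime relevant to the main theorem) and consider the line $\ell_s=\{(x_0^1+r,\,x_0^2+s,\,x_0^3,\ldots,x_0^d):r\in\bR\}$. Assumption \ref{A11}(a) guarantees that along $\ell_s\cap B_R(x_0)$ the portion $r>\gamma R$ lies in $\Omega$ while the portion $r<-\gamma R$ lies in the exterior of $\overline{\Omega}$, so by intermediate value $\ell_s$ meets $\partial\Omega$ at some $|r^*|\leq\gamma R$. The resulting point $y$ satisfies $(y^3,\ldots,y^d)=(x_0^3,\ldots,x_0^d)$, and hence $y^2-\phi(y^3,\ldots,y^{m+2})=s$, placing $y$ in the $\cD$- or $\cN$-safe region according to the sign of $s$. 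The time-$t$ slices of $\cD$ and $\cN$ are then disjoint, non-empty, relatively open subsets of $\partial\Omega\cap B_R(x_0)$; since this latter set is connected (a standard consequence of Reifenberg flatness for small $\gamma$), their union cannot exhaust it, and the remainder must lie in $\Gamma(t)\cap B_R(x_0)$, as required.

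Part (ii) then follows by contradiction: if $Y=(s,y)\in Q_{R/2}(X_0)\cap\Gamma$, then $y\in\Gamma(s)\cap B_{R/2}(x_0)$ and $s<t_0<s+(R/2)^2$; applying Part (i) to $Y$ at radius $R/2\leq R_0$ and time $t_0\in(s-(R/2)^2,\,s+(R/2)^2)$ yields a point of $\Gamma(t_0)$ in $B_{R/2}(y)\subset B_R(x_0)$, contradicting $B_R(x_0)\cap\Gamma(t_0)=\emptyset$. The main obstacle is Part (i): both the non-emptiness of the two safe-region traces and the connectedness of $\partial\Omega\cap B_R(x_0)$ rely on the Reifenberg hypothesis together with a careful choice of the auxiliary parameter $s$; once these are in hand, the rest is routine.
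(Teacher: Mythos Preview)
Your argument is correct and follows essentially the same route as the paper's proof. The paper proves (i) by the same contradiction: if $B_R(x_0)\cap\Gamma(t)=\emptyset$, then $\{t\}\times(\partial\Omega\cap B_R(x_0))$ would lie entirely in $\cD$ or entirely in $\cN$, contradicting the inclusions from Assumption~\ref{A11}(b) together with $\phi(x_0^3,\ldots,x_0^{m+2})=x_0^2$; and it simply states that (ii) is an easy consequence of (i). Your write-up makes explicit two ingredients the paper leaves tacit---the construction of actual boundary points in the $\cD$-safe and $\cN$-safe regions via the transversal lines $\ell_s$, and the appeal to connectedness of $\partial\Omega\cap B_R(x_0)$---and your derivation of (ii) from (i) is exactly the intended one. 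The restriction $\gamma<1/\sqrt{2}$ you flag is real (and implicitly present in the paper's argument as well), but harmless since the lemma is only ever invoked with the small $\gamma$ required elsewhere in the paper.
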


\begin{proof}
We only prove the assertion $(i)$ because $(ii)$ is its easy consequence.
Suppose that there exists $t\in (t_0-R^2,t_0+R^2)$ such that
$$
B_R(x_0)\cap \Gamma(t)=\emptyset.
$$
Then either
$$
\{t\}\times (B_{R}(x_0)\cap \partial \Omega)\subset \cD(t) \quad \text{or}\quad \{t\}\times (B_{R}(x_0)\cap \partial \Omega)\subset \cN(t),
$$
which contradicts with the fact that
$$
\{t\}\times \big(B_{R}(x_0)\cap \partial \Omega\cap \{x: x^2>\phi+\gamma R\}\big)\subset \cD(t),
$$
$$
\{t\}\times \big(B_{R}(x_0)\cap \partial \Omega\cap \{x:x^2<\phi-\gamma R\}\big)\subset \cN(t),
$$
and
$$
\phi(x_0^3,\ldots, x_0^d)=x_0^2.
$$
in the coordinate system associated with $(X_0, R)$.
The assertion $(a)$ is proved.
\end{proof}


\begin{thebibliography}{10}

\bibitem{MR3315670}
Pascal Auscher, Nadine Badr, Robert Haller-Dintelmann, and Joachim Rehberg.
\newblock The square root problem for second-order, divergence form operators with mixed boundary conditions on $L^p$.
\newblock {\em J. Evol. Equ.} 15(1):165--208, 2015.

\bibitem{MR3170211}
Kevin Brewster, Dorina Mitrea, Irina Mitrea, and Marius Mitrea.
\newblock Extending Sobolev functions with partially vanishing traces from locally $(\varepsilon,\delta)$-domains and applications to mixed boundary problems.
\newblock {\em J. Funct. Anal.} 266(7):4314--4421, 2014.

\bibitem{MR1284808}
Russell Brown.
\newblock The mixed problem for {L}aplace's equation in a class of {L}ipschitz
  domains.
\newblock {\em Comm. Partial Differential Equations}, 19(7-8):1217--1233, 1994.

\bibitem{BC20}
	R.~M. Brown and L.~D. Croyle.
	\newblock Estimates for the $L^q$-mixed problem in $C^{1,1}$-domains.
	\newblock {\em Complex Variables and Elliptic Equations}, 0(0):1--13, 2020.

\bibitem{MR1486629}
L.~A. Caffarelli and I.~Peral.
\newblock On {$W^{1,p}$} estimates for elliptic equations in divergence form.
\newblock {\em Comm. Pure Appl. Math.}, 51(1):1--21, 1998.

\bibitem{MR3809039}
Jongkeun Choi, Hongjie Dong, and Doyoon Kim.
\newblock Conormal derivative problems for stationary {S}tokes system in
  {S}obolev spaces.
\newblock {\em Discrete Contin. Dyn. Syst.}, 38(5):2349--2374, 2018.

\bibitem{arXiv:1904.00545}
Jongkeun Choi, Hongjie Dong, and Zongyuan Li.
\newblock Optimal regularity for a {D}irichlet-conormal problem in {R}eifenberg
  flat domain.
  \newblock {\em Appl. Math. Optim.},  83(3):1547--1583, 2021.


\bibitem{MR2835999}
Hongjie Dong and Doyoon Kim.
\newblock Higher order elliptic and parabolic systems with variably partially
  {BMO} coefficients in regular and irregular domains.
\newblock {\em J. Funct. Anal.}, 261(11):3279--3327, 2011.

\bibitem{MR3013054}
Hongjie Dong and Doyoon Kim.
\newblock {\em The conormal derivative problem for higher order elliptic
  systems with irregular coefficients}, volume 581 of {\em Contemp. Math.}
\newblock Amer. Math. Soc., Providence, RI, 2012.


\bibitem{arXiv:2003.10980}
Hongjie Dong and Zongyuan Li.
\newblock The {D}irichlet-conormal problem with homogeneous and inhomogeneous
  boundary conditions.
\newblock {\em Comm. Partial Differential Equations}, 46(3):470--497, 2021.

\bibitem{MR3582221}
Karoline Disser, A. F. M. ter Elst, and Joachim Rehberg,
\newblock On maximal parabolic regularity for non-autonomous parabolic operators.
\newblock {\em J. Differential Equations} 262(3):2039--2072, 2017.

\bibitem{MR3797618}
Moritz Egert.
\newblock $L^p$-estimates for the square root of elliptic systems with mixed boundary conditions.
\newblock {\em J. Differential Equations}, 265(4):1279--1323, 2018.


\bibitem{MR2597943}
{L}awrence~{C}raig Evans.
\newblock {\em Partial differential equations}, volume~19 of {\em Graduate
  Studies in Mathematics}.
\newblock American Mathematical Society, Providence, RI, second edition, 2010.

\bibitem{MR3659370}
Stephan Fackler.
\newblock Non-autonomous maximal regularity for forms given by elliptic operators of bounded variation.
\newblock {\em  J. Differential Equations} 263(6):3533--3549, 2017.

\bibitem{MR0717034}
Mariano Giaquinta.
\newblock {\em Multiple integrals in the calculus of variations and nonlinear
  elliptic systems}, volume 105 of {\em Annals of Mathematics Studies}.
\newblock Princeton University Press, Princeton, NJ, 1983.

\bibitem{MR0990595}
Konrad Gr\"oger.
\newblock A {$W^{1,p}$}-estimate for solutions to mixed boundary value problems
  for second order elliptic differential equations.
\newblock {\em Math. Ann.}, 283(4):679--687, 1989.

\bibitem{MR3573649}
Robert Haller-Dintelmann, Alf Jonsson, Dorothee Knees, and Joachim Rehberg.
\newblock Elliptic and parabolic regularity for second-order divergence
  operators with mixed boundary conditions.
\newblock {\em Math. Methods Appl. Sci.}, 39(17):5007--5026, 2016.

\bibitem{MR2541414}
Robert Haller-Dintelmann and Joachim Rehberg.
\newblock Maximal parabolic regularity for divergence operators including mixed boundary conditions. \newblock {\em J. Differential Equations}, 247(5):1354--1396, 2009.

\bibitem{MR2810833}
Roland Herzog, Christian Meyer, and Gerd Wachsmuth.
\newblock Integrability of displacement and stresses in linear and nonlinear
  elasticity with mixed boundary conditions.
\newblock {\em J. Math. Anal. Appl.}, 382(2), 2011.

\bibitem{MR2403322}
Matthias Hieber and Joachim Rehberg.
\newblock Quasilinear parabolic systems with mixed boundary conditions on nonsmooth domains.
\newblock {\em SIAM J. Math. Anal.}, 40(1):292--305, 2008.

\bibitem{MR3190265}
B.~Tomas Johansson and Vladimir~A. Kozlov.
\newblock Solvability and asymptotics of the heat equation with mixed variable
  lateral conditions and applications in the opening of the exocytotic fusion
  pore in cells.
\newblock {\em IMA J. Appl. Math.}, 79(2):377--392, 2014.


\bibitem{arXiv:2007.01986}
Doyoon Kim, Seungjin Ryu, and Kwan Woo.
\newblock Parabolic equations with unbounded lower-order coefficients in
  sobolev spaces with mixed norms.
\newblock {\em arXiv:2007.01986}.

\bibitem{MR3804727}
Tujin Kim and Daomin Cao.
\newblock Existence of solution to parabolic equations with mixed boundary condition on non-cylindrical domains.
\newblock {\em J. Differential Equations}, 265(6):2648--2670, 2018.


\bibitem{MR0563790}
Nicolai~V. Krylov and Mikhail~V. Safonov.
\newblock A property of the solutions of parabolic equations with measurable
  coefficients.
\newblock {\em Izv. Akad. Nauk SSSR Ser. Mat.}, 44(1):161--175, 1980.

\bibitem{MR0241822}
{O}l'ga~Aleksandrovna Lady{\v{z}}enskaja, {V}sevolod~{A}lekseevich Solonnikov,
  and {N}ina~{N}ikolaevna Ural'ceva.
\newblock {\em Linear and quasilinear equations of parabolic type}.
\newblock Translated from the Russian by S. Smith. Translations of Mathematical
  Monographs, Vol. 23. American Mathematical Society, Providence, R.I., 1968.

\bibitem{MR1804512}
C.~Lederman, J.~L. V\'{a}zquez, and N.~Wolanski.
\newblock Uniqueness of solution to a free boundary problem from combustion.
\newblock {\em Trans. Amer. Math. Soc.}, 353(2):655--692, 2001.

\bibitem{MR1799414}
C.~Lederman, N.~ Wolanski, and J.~L. Vazquez.
\newblock A mixed semilinear parabolic problem in a noncylindrical space-time
  domain.
\newblock {\em Differential Integral Equations}, 14(4):385--404, 2001.

\bibitem{MR0826642}
Gary M. Lieberman.
\newblock Mixed boundary value problems for elliptic and parabolic differential equations of second order.
\newblock {\em J. Math. Anal. Appl.}, 113(2):422--440, 1986.


\bibitem{MR0579490}
Mikhail~V. Safonov.
\newblock Harnack's inequality for elliptic equations and {H}\"older property
  of their solutions.
\newblock {\em Zap. Nauchn. Sem. Leningrad. Otdel. Mat. Inst. Steklov. (LOMI)},
  96:272--287, 312, 1980.

\bibitem{MR1452171}
Giuseppe Savar\'{e}.
\newblock Regularity and perturbation results for mixed second order elliptic
  problems.
\newblock {\em Comm. Partial Differential Equations}, 22(5-6):869--899, 1997.

\bibitem{MR1444765}
Giuseppe Savar\'{e}.
\newblock Parabolic problems with mixed variable lateral conditions: an abstract approach.
\newblock {\em J. Math. Pures Appl. (9)}, 76(4):321--351, 1997.


\bibitem{MR0239272}
Eliahu Shamir.
\newblock Regularization of mixed second-order elliptic problems.
\newblock {\em Israel J. Math.}, 6:150--168, 1968.

\bibitem{MR1950984}
A. L. Skubachevskii and R. V. Shamin.
\newblock The mixed boundary value problem for parabolic differential-difference equation.
\newblock International Conference on Differential and Functional Differential Equations (Moscow, 1999).
{\em Funct. Differ. Equ.} 8(3-4):407--424, 2001.



\bibitem{MR3034453}
J.~L. Taylor, K.~A. Ott, and R.~M. Brown.
\newblock The mixed problem in {L}ipschitz domains with general decompositions
  of the boundary.
\newblock {\em Trans. Amer. Math. Soc.}, 365(6):2895--2930, 2013.

\end{thebibliography}
\end{document}